\documentclass[10pt, a4paper]{scrartcl} 
\usepackage[utf8]{inputenc}
\usepackage[left=2.5cm,right=2.5cm,top=1.8cm,bottom=1.8cm]{geometry}
\usepackage{amsmath,amssymb,amsthm,mathtools}
\usepackage{xcolor}
\usepackage{graphicx}
\usepackage{bbm,dsfont}
\usepackage{hyperref} 
\usepackage{url}
\usepackage{framed}
\usepackage{tikz}
\usetikzlibrary{arrows.meta}
\usetikzlibrary{positioning}
\usetikzlibrary{decorations.markings}
\usetikzlibrary{shapes.misc}
\usetikzlibrary{hobby}
\usepackage{subfigure}
\usepackage[small,bf,sf]{caption}
\usepackage{mathrsfs}
\usepackage{esint}
\usepackage{csquotes}
\usepackage[english]{babel}
\usepackage[normalem]{ulem}
\PassOptionsToPackage{hyperfootnotes=false}{hyperref}
\usepackage{caption}
\usepackage[nameinlink,capitalise,sort]{cleveref}
\hypersetup{
  colorlinks = true,
  linkcolor = blue,
  urlcolor = blue,
  citecolor = blue
}
\usepackage{comment}

\usepackage[nameinlink,capitalise,sort]{cleveref}
\crefname{equation}{}{}
\crefname{enumi}{}{}

\crefname{figure}{Figure}{Figures}

\usepackage{enumerate}
\usepackage{enumitem}
\usepackage{wrapfig}

\theoremstyle{plain}
\begingroup
\theoremstyle{plain}
\newtheorem{theorem}{Theorem}[section]
\newtheorem{corollary}[theorem]{Corollary}
\newtheorem{proposition}[theorem]{Proposition}
\newtheorem{lemma}[theorem]{Lemma}
\theoremstyle{definition}
\newtheorem{definition}[theorem]{Definition}
\theoremstyle{assumptions}
\newtheorem{assumptions}[theorem]{Assumption}
\theoremstyle{remark}
\newtheorem{remark}[theorem]{Remark}

\endgroup

\theoremstyle{definition}
\theoremstyle{remark}

\numberwithin{equation}{section}

\newcommand{\R}{\mathbb{R}}

\begin{document}
\title{Conservation Laws with Discontinuous Flux:\\ A First Application to Traffic Flow}
\author{Roberta Bianchini, Maya Briani, Benedetto Piccoli}
\date{}
\maketitle

\begin{abstract}
    This work provides a traffic flow interpretation of the model recently introduced in \cite{ABS2025}. We adapt the conservation law with discontinuous gradient-dependent flux to a macroscopic traffic setting, assuming concave flux functions $f(u)$ and $g(u)$ defined on the density domain $[0,u_{\max}]$. To prevent violations of the maximal density constraint, we introduce modified Riemann Solvers tailored to this setting and generalize the existence theory for weak solutions developed in \cite{ABS2025}. Finally, we construct Riemann Solvers for a broader two-phase model incorporating a free-flow regime, in which $f(u)=g(u)$ on a subset of $[0,u_{\max}]$, and establish existence of weak solutions in this more general framework.
\end{abstract}

\section*{Introduction}
Modeling of vehicular traffic dynamics has a long history \cite{garavello2016models,helbing2001traffic,piccoli2009vehicular,zhang2024car}, still new phenomena emerging from
next-generation data
\cite{coifman2017critical,ji2026scalable}
remain elusive.
At the macroscopic level, the foundational model for traffic flow, based on scalar conservation laws, is the celebrated Lighthill–Whitham–Richards (LWR) model $u_t + \partial_x f(u) = 0$, where $u(t,x) \in [0, u_{\max}]$ denotes the vehicle density and $f(u) = u v(u)$ is the flow rate, with the average speed $v$ depending only on the density $u$.
The LWR model
captures the propagation of shock fronts and rarefaction waves, 
but fails to represent
more complex, non-equilibrium phenomena.
One of the most challenging is traffic hysteresis, which means that traffic states follow distinct paths during acceleration and deceleration cycles. 
Drivers tend to decelerate more cautiously when entering a congestion front than they accelerate when 
exiting towards free flow.
Hysteresis calls for the use of multivalued fundamental diagrams and hysteretic loops in the flow–density plane
and may explain the emergence of phantom traffic jams and stop-and-go waves.

Treiterer and Myers first detected traffic hysteresis \cite{treiterer1974hysteresis} by identifying clockwise hysteresis loops in aerial vehicle trajectory data.
Transportation engineering models linked hysteresis to asymmetric driver behavior, delayed reaction times, and anticipation \cite{zhang1999mathematical}, as well as multi-lane dynamics and headway adaptation during lane-changing maneuvers \cite{laval2011hysteresis}. 
To capture this microscopic phenomenon at the macroscopic scale, non-equilibrium dynamics prompted the development of second-order hyperbolic systems, most notably the Aw--Rascle--Zhang (ARZ) model \cite{aw2000resurrection,zhang2002non}.
Corli and Fan \cite{corli2024hysteretic} recently formulated the \emph{Hysteretic Aw--Rascle--Zhang (HARZ)} model to
reproduce persistent stop-and-go waves driven by hysteresis. They augmented the classical system with a hysteretic internal state variable and identified criteria dictating whether traffic oscillations persist or dissipate \cite{corli2019hysteresis, corli2024hysteretic}.

A parallel, mathematically compelling approach aims to capture hysteretic phase transitions while preserving the structural simplicity of the LWR model.
Amadori, Bressan, and Shen
\cite{ABS2025} introduced a scalar conservation law with a discontinuous, gradient-dependent flux:
\begin{equation}\label{eq:original}
    u_t + [\theta(u_x) f(u) + (1-\theta (u_x))g(u)]_x=0, \quad \theta(s)=\begin{cases}
        1, \quad s>0,\\
        0, \quad s<0. 
    \end{cases}
\end{equation}
Here $f,g \in C^{2}([0, u_{\max}])$ are strictly concave functions, where $u_{\max}>0$ is the maximal density.
The governing flux dynamically switches based on the sign of the local spatial derivative $u_x$, selecting the upper branch $f(u)$ in regions of compression or deceleration (where the density increases, $u_x > 0$) and the lower branch $g(u)$ in regions of expansion or acceleration ($u_x < 0$). 
Motivated by modeling of traffic hysteresis and following \cite{ABS2025}, we consider the \emph{unstable} condition:
\[f(u)>g(u) \quad u \in (0, u_{\max}).\]

The authors of \cite{ABS2025} established the existence of global weak entropy solutions for flux functions defined across the unbounded real line $\mathbb{R}$, with initial data which are piecewise monotone, i.e., increasing or decreasing on
a finite number of intervals.
Motivated by traffic modeling, we consider the strict enforcement of physical domain constraints, namely that density remains non-negative and bounded by $u_{\max}$. Moreover, traffic flow naturally exhibits a free-flow regime at low densities, say $0\leq u\leq u_{\mathrm{free}}$ for some 
$u_{\mathrm{free}}>0$,
with minimal vehicle interactions and coinciding acceleration and deceleration branches, i.e.,  $f(u) = g(u)$ for $u \in [0, u_{\mathrm{free}}]$.

In this paper, we first develop a complete study of Riemann solvers on the compact domain $[0, u_{\max}]$, designing boundary-adapted interface configurations. 
This choice generates new wave patterns that saturate at the maximal density $u_{\mathrm{max}}$ and exhibit vacuum-like behavior where the density vanishes, as in gas dynamics.
The evolution of these discontinuity interfaces is given by ordinary differential equations with discontinuous right-hand sides. We employ directional bounded variation theory \cite{bressan1988} and Picard fixed-point arguments to establish the existence of weak solutions for bounded variation, piecewise monotone initial data. 
Then, we generalize the framework to a two-phase traffic model incorporating an identical free-flow regime alongside a hysteretic congested regime, classifying all wave transitions across the critical density $u_{\mathrm{free}}$ and proving local-in-time existence.

\subsection*{Plan of the paper.} In the first section, we introduce our setting, including the assumptions on the flux functions and the initial data, as well as the notion of a weak solution. In Section 2, we describe and analyze the building blocks of the existence theory, namely the Riemann Solvers. Section 3 is devoted to the construction of weak solutions. Finally, in Section 4, we discuss the two-phase model with a free-flow regime.

\medskip

\section{Preliminaries}
Consider two concave functions $f, g$ that satisfy the following.
\begin{assumptions}\label{assump}
Let $u_{\max}>0$ denote the maximal density. Assume that
\[
f, g\in C^2([0,u_{\max}])
\]
are two strictly concave flux functions satisfying
\[
f(0)=g(0)=f(u_{\max})=g(u_{\max})=0.
\]
\end{assumptions}
We consider solutions to the Cauchy problem associated with the equation \eqref{eq:original} for the following class of initial data as in \cite{ABS2025}:
\begin{itemize}
    \item[(H1)] An initial profile
    \[
    u(0,x) = \bar{u}(x),
    \]
    where $\bar{u} \in L^\infty(\mathbb{R})$ is a piecewise monotone function with  $0\leq \bar u \leq u_{\max}$.
    \item[(H2)] A piecewise constant function $\bar{\theta} : \mathbb{R} \to \{0,1\}$, and a \emph{finite} set of interfaces
    \[
    \bar{y}_1 < \bar{y}_2 < \cdots < \bar{y}_N,
    \]
    associated with $\bar{u}$ and $\bar{\theta}$, as in the following definition.
\end{itemize}
\begin{definition}[\cite{ABS2025}]\label{def:interface}
        For a given couple $(\bar u(x), \bar\theta(x))$, a set of points 
        \[\bar{y}_1 < \bar{y}_2 < \cdots < \bar{y}_N,
        \]
        is a \emph{set of interfaces} for $(\bar u, \bar \theta)$ if it contains all the jumps in $\bar \theta$ and, for any open interval $J_i=(y_i, y_{i+1}), i \in \{0, \cdots, N\}$:
        \begin{itemize}
            \item If $\bar \theta(x)=1$ on $J_i$, then $\bar u(x)$ is monotone increasing.
            \item If $\bar \theta(x)=0$ on $J_i$, then $\bar u(x)$ is monotone decreasing.
        \end{itemize}
\end{definition}
Following \cite{ABS2025}, we introduce the following definition of \emph{weak solutions}.
\begin{definition}\label{def:weak_sol}
A BV function is a \emph{weak solution} to \eqref{eq:original} with initial data as in (H1)-(H2) if:
\begin{itemize}
    \item The map $t \to u(t, \cdot)$ is continuous from $[0, T]$ to $L^1_{loc}(\mathbb R)$,  satisfies the initial conditions, and $0\leq u(t,x) \leq u_{\max}$.
    \item There exists a Lebesgue measurable function $\theta (t, x):[0, T] \times \mathbb R \to [0, 1]$ such that
    \[u_x(t, \cdot) = \chi_{\theta=1}[u_x(t, \cdot)]_+ - \chi_{\theta=0}[u_x(t, \cdot)]_- \quad \text{for a.e.} \; t \in [0, T]. \]
    \item For every compactly supported test function $\varphi (t, x) \in C_c((0, T) \times \mathbb R)$:
    \[\int_0^T \int_{\mathbb R} u \varphi_t + (\theta f(u) + (1-\theta) g(u)) \varphi_x \, dx \, dt=0.\]
\end{itemize}
\end{definition}

\section{Riemann Solver}\label{sec:solvers}
We now present a general strategy to solve problems with piecewise constant initial data:
\begin{equation}\label{eq:rp}
\begin{array}{cc}
u(0,x) = \left\{\begin{array}{lcr}
u^- & \mbox{if} & x<0,
\\
u^+ & \mbox{if} & x>0,
\end{array}\right.
&
\theta(0,x) = \left\{\begin{array}{lcr}
\theta^- & \mbox{if} & x<0,
\\
\theta^+ & \mbox{if} & x>0.
\end{array}\right.
\end{array}
\end{equation}

We first analyze the admissibility of downward and upward jump discontinuities. 
\begin{lemma}[Downward jumps are not admissible]
    Let the flux functions $f,g$ satisfy the Assumption \ref{assump}. Let $(u^-,\theta^-)$ and $(u^+,\theta^+)$ denote the left and right state of a jump. Then, any downward jump ($u^->u^+$) violates the Lax admissibility condition
    \begin{equation}\label{eq:LaxCond}
        \theta^+f^\prime(u^+)+(1-\theta^+)g^\prime(u^+)\leq \lambda\leq \theta^- f^\prime(u^-)+(1-\theta^-)g^\prime(u^-),
    \end{equation}
    where
    \begin{equation}\label{eq:lambda}
        \lambda = \displaystyle\frac{[\theta^+ f(u^+)+(1-\theta^+)g(u^+)]-[\theta^- f(u^-)+(1-\theta^-)g(u^-)]}{u^+-u^-}.        
    \end{equation}
\end{lemma}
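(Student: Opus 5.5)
The plan is a direct case analysis on $(\theta^-,\theta^+)\in\{0,1\}^2$. In each case we compare $\lambda$ with a secant slope of a single concave flux, and use strict concavity to break one of the two inequalities in \eqref{eq:LaxCond}. Set $d=u^--u^+>0$, and for $h\in\{f,g\}$ write
\[
s_h=\frac{h(u^-)-h(u^+)}{u^--u^+}.
\]
By strict concavity of $h$ on $[u^+,u^-]$ (Assumption \ref{assump}), $h'$ is strictly decreasing and the mean value theorem gives
\[
h'(u^-)<s_h<h'(u^+).
\]
The only other ingredient is the unstable ordering $f\ge g$ on $[0,u_{\max}]$, with equality at the endpoints. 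Note that \eqref{eq:lambda} can be rewritten as $\lambda=\bigl(F^-(u^-)-F^+(u^+)\bigr)/d$, where $F^\pm=\theta^\pm f+(1-\theta^\pm)g$.

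\textbf{Same flux, $\theta^-=\theta^+$.} Let $h$ be the common flux, so $\lambda=s_h$. Condition \eqref{eq:LaxCond} would require $h'(u^+)\le s_h\le h'(u^-)$. This forces $h'(u^+)\le h'(u^-)$, which contradicts $h'(u^-)<h'(u^+)$. So both inequalities fail.

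\textbf{Case $\theta^-=1,\ \theta^+=0$.} Here $\lambda=(f(u^-)-g(u^+))/d$. Since $g(u^+)\le f(u^+)$, we get
\[
\lambda\ge\frac{f(u^-)-f(u^+)}{d}=s_f>f'(u^-).
\]
This violates the right inequality $\lambda\le f'(u^-)$ in \eqref{eq:LaxCond}.

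\textbf{Case $\theta^-=0,\ \theta^+=1$.} Here $\lambda=(g(u^-)-f(u^+))/d$. Since $g(u^-)\le f(u^-)$, we get
\[
\lambda\le s_f<f'(u^+).
\]
This violates the left inequality $f'(u^+)\le\lambda$ in \eqref{eq:LaxCond}.

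The only delicate point is choosing, in the mixed cases, which inequality to attack and which flux to bound by. Bounding via $s_g$ goes in the useless direction. Bounding via $s_f$, using $f\ge g$ at the one endpoint where the flux switches, gives the contradiction. Equality $f=g$ at $0$ or $u_{\max}$ causes no trouble, since the strictness comes from concavity and not from the ordering. Since $\theta$ in \eqref{eq:rp} takes only the values $0$ and $1$, these four cases are exhaustive.
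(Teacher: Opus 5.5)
Your proof is correct and follows the paper's argument essentially step for step. The paper uses the same three-way case split. In the same-flux case, strict concavity places $\lambda$ strictly between $h'(u^-)$ and $h'(u^+)$. In the mixed cases, the ordering $f\ge g$ at the state where the flux switches pushes $\lambda$ above $f'(u^-)$ (respectively below $f'(u^+)$) through the secant slope of $f$. Like the paper, you implicitly rely on the unstable condition $f\ge g$ from the Introduction, which is not part of Assumption~\ref{assump}. Your remark that the non-strict ordering suffices, because strictness comes from concavity, is a small but welcome clarification of the paper's strict inequalities at the endpoints $0$ and $u_{\max}$.
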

\begin{proof}
We distinguish the following three cases:
\begin{itemize}
    \item If $\theta^-=\theta^+=\theta \in \{0, 1\}$, then by the concavity of the flow functions
    $$
	\lambda = \theta\displaystyle\frac{f(u^+)-f(u^-)}{u^+-u^-}+(1-\theta)\displaystyle\frac{g(u^+)-g(u^-)}{u^+-u^-}  < \theta\ f^\prime(u^+) + (1-\theta)\ g^\prime(u^+),    
    $$ 
    which obviously violates the Lax condition.
    \item If $\theta^-=1$ and $\theta^+=0$, then $u^-$ (and $u^+$) is on the graph of $f$ (and $g$) respectively, and the Lax condition writes $g^\prime(u^+)\leq\lambda\leq f^\prime(u^-)$. By Assumption \ref{assump}, the concavity property and $u^+<u^-$, we obtain  
    $$
    		\lambda = \frac{f(u^-)-g(u^+)}{u^--u^+} = \frac{f(u^-)-f(u^+)}{u^--u^+}+\frac{f(u^+)-g(u^+)}{u^--u^+} > f^\prime(u^-), 
    $$
    which violates the Lax condition.
    \item If $\theta^-=0$ and $\theta^+=1$, then $u^-$ (and $u^+$) is on the graph of $g$ (and $f$) respectively, and the Lax condition writes $f^\prime(u^+)\leq\lambda\leq g^\prime(u^-)$. As before, by the Assumption \ref{assump}, the concavity property and $u^+<u^-$,
    $$
    		\lambda = \frac{f(u^+)-g(u^-)}{u^+-u^-} = \frac{f(u^+)-f(u^-)}{u^+-u^-}+\frac{f(u^-)-g(u^-)}{u^+-u^-} <  f^\prime(u^+),
    $$
    which, once again, violates the Lax condition.
\end{itemize}
\end{proof}

\begin{lemma}[When upward jumps are admissible]\label{lemma:up_jump}
Let $u^-<u^+$ and $\theta^-\ne\theta^+$. According to \eqref{eq:LaxCond}, upward jumps are Lax-admissible in the following two situations:
\begin{itemize}
\item $\theta^-=1$, $\theta^+=0$ and $u^* \leq u^+$, where $(u^*,g(u^*))$ denotes the unique point satisfying: 
\begin{equation}\label{eq:ustar}
u^*> u^- \quad\mbox{such that}\quad g^\prime(u^*) = \frac{f(u^-)-g(u^*)}{u^--u^*}.
\end{equation}
\item $\theta^-=0$, $\theta^+=1$ and $u^- \leq v^*$, where $(v^*,g(v^*))$ denotes the unique point satisfying:
\begin{equation}\label{eq:vstar}
v^* < u^+ \quad\mbox{such that}\quad g^\prime(v^*) = \frac{f(u^+)-g(v^*)}{u^+-v^*}.
\end{equation}
\end{itemize} 
\end{lemma}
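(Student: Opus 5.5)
We split into the two cases $\theta^-=1,\theta^+=0$ and $\theta^-=0,\theta^+=1$, following the proof of the previous lemma. In each case we write the two inequalities of \eqref{eq:LaxCond} explicitly. We check that one of them always holds, by the same concavity-plus-$f>g$ decomposition used for downward jumps, and that the other reduces to the stated threshold condition. The main tool is a monotonicity argument: the tangent-line condition defining $u^*$ (resp.\ $v^*$) marks exactly where a secant slope crosses a derivative.

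\emph{Case $\theta^-=1,\theta^+=0$.} Here $\lambda=\frac{g(u^+)-f(u^-)}{u^+-u^-}$, and the Lax condition reads $g'(u^+)\le\lambda\le f'(u^-)$. For the right inequality we write
\[
\lambda=\frac{g(u^+)-g(u^-)}{u^+-u^-}-\frac{f(u^-)-g(u^-)}{u^+-u^-}.
\]
The first term is at most $g'(u^-)$ by concavity and the second is nonpositive since $f\ge g$. This gives only $\lambda\le g'(u^-)$, which is not directly comparable with $f'(u^-)$, so we compare with $f$ instead. Since $g(u^+)< f(u^+)$ for $u^+<u_{\max}$ (with equality at $u_{\max}$), we get $\lambda\le \frac{f(u^+)-f(u^-)}{u^+-u^-}\le f'(u^-)$ by concavity of $f$. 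This inequality always holds.

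For the left inequality, set $h(w)=g(w)-f(u^-)-g'(w)(w-u^-)$ for $w>u^-$. The condition $g'(u^+)\le\lambda$ is equivalent to $h(u^+)\ge0$. We have $h'(w)=-g''(w)(w-u^-)>0$, using strict concavity (we take $g''<0$, or at least strict monotonicity of $g'$, which suffices via a direct argument). Moreover $h(u^-{}^+)=g(u^-)-f(u^-)<0$ when $u^->0$, and $h(u_{\max})=-f(u^-)-g'(u_{\max})(u_{\max}-u^-)>0$ when $u^-<u_{\max}$. The latter holds because the tangent to $g$ at $u_{\max}$ lies above $g$, and hence above $0$ only on the correct side. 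To verify it: $g(u^-)\le g'(u_{\max})(u^--u_{\max})$ by concavity, so $h(u_{\max})\ge g(u^-)-f(u^-)$, which is the wrong direction. This step is the main obstacle. Instead we argue directly that the secant from $(u^-,f(u^-))$ lies above the graph of $g$, so a tangency point from that external point exists: the tangent point $u^*$ is where the line through $(u^-,f(u^-))$ touches $g$ from above, and it may lie in $(u^-,u_{\max}]$ or, if no interior tangency exists, be taken as defined by the paper's uniqueness claim.

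Given existence, strict monotonicity of $h$ yields uniqueness of $u^*$. Hence $h(u^+)\ge0\iff u^+\ge u^*$, which is the claim.

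\emph{Case $\theta^-=0,\theta^+=1$.} Here $\lambda=\frac{f(u^+)-g(u^-)}{u^+-u^-}$, and we need $f'(u^+)\le\lambda\le g'(u^-)$. The left inequality always holds: $\lambda\ge\frac{f(u^+)-f(u^-)}{u^+-u^-}\ge f'(u^+)$, using $g\le f$ and concavity. For the right one, set $k(w)=f(u^+)-g(w)-g'(w)(u^+-w)$ for $w<u^+$. Then $\lambda\le g'(u^-)\iff k(u^-)\le0$, and $k'(w)=-g''(w)(u^+-w)\cdot(-1)$. More precisely, $k'(w)=g''(w)(u^+-w)<0$, so $k$ is strictly decreasing. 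We also have $k(u^{+-})=f(u^+)-g(u^+)>0$. Therefore $k(u^-)\le0\iff u^-\le v^*$, where $v^*$ is the unique zero of $k$, that is \eqref{eq:vstar}. Existence of the zero again follows from the tangent line to $g$ through the exterior point $(u^+,f(u^+))$.

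We finally note that the strict versions of the two monotonicity arguments are exactly what produce the uniqueness asserted for $u^*$ and $v^*$.
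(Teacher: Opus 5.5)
Your first case ($\theta^-=1$, $\theta^+=0$) is correct, but the second case contains a sign error that makes the argument, as written, self-contradictory. Differentiating $k(w)=f(u^+)-g(w)-g'(w)(u^+-w)$ gives
\[
k'(w)=-g'(w)-g''(w)(u^+-w)+g'(w)=-g''(w)(u^+-w)>0 \qquad (w<u^+),
\]
so $k$ is strictly \emph{increasing}. Geometrically, the value at $u^+$ of the tangent line to $g$ at $w$ decreases to $g(u^+)$ as $w\uparrow u^+$. Your version breaks in two ways. A strictly decreasing $k$ with $k(u^+-)=f(u^+)-g(u^+)>0$ would be positive on all of $[0,u^+)$, so $v^*$ could not exist and $k(u^-)\le 0$ would never hold. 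And even if a zero existed, a decreasing $k$ would give $k(u^-)\le 0\iff u^-\ge v^*$, the opposite of your conclusion. With the corrected sign, $k(v^*)=0$ and strict increase give exactly $k(u^-)\le 0\iff u^-\le v^*$. The fix is one line, but it has to be made.

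Your existence claims via the ``external point'' tangency argument are false on $[0,u_{\max}]$, and your own computation shows why. Since $h$ is strictly increasing with $\lim_{w\downarrow u^-}h(w)=g(u^-)-f(u^-)<0$, a point $u^*\in(u^-,u_{\max})$ exists iff $h(u_{\max})>0$. That is, iff $f(u^-)/(u^--u_{\max})>g'(u_{\max})$, which is precisely $u^-<u^-_{\mathrm{lim}}$ from the paper's next lemma. When it fails, you are in the paper's Case 2C. Likewise $v^*$ exists iff $k(0)=f(u^+)-g'(0)u^+<0$, i.e.\ $u^+>u^+_{\mathrm{lim}}$, which fails in Case 3C. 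The statement presupposes that $u^*$ and $v^*$ are defined, so simply assume it rather than arguing existence.

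With these repairs your route is correct and genuinely different from the paper's. The paper proves only sufficiency in the first case. It writes $\lambda$ as a convex combination of the tangency slope $g'(u^*)$ on $[u^-,u^*]$ and the secant slope of $g$ on $[u^*,u^+]$, both at least $g'(u^+)$, and it omits the second case as similar. Your auxiliary functions $h,k$ give more:
\begin{itemize}
\item the full equivalence, whose ``only if'' direction the paper later invokes in Case 2B (``a direct jump is not admissible if $u^-<u^+<u^*$'');
\item the uniqueness of $u^*$ and $v^*$, which the statement asserts without proof;
\item through $h(u_{\max})$ and $k(0)$, the $u^\mp_{\mathrm{lim}}$ existence criterion.
\end{itemize}
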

\begin{proof}
\item Consider $\theta^-=1$ and $\theta^+=0$. By Assumptions \ref{assump} and recalling that $u^-<u^+$, we have  
    $$
    		\lambda = \frac{g(u^+)-f(u^-)}{u^+-u^-} = \frac{g(u^+)-f(u^+)}{u^+-u^-}+\frac{f(u^+)-f(u^-)}{u^+-u^-} < f^\prime(u^-). 
    $$
    On the other hand, for $u^-\leq u^* < u^+$,
    $$\begin{array}{ll}
    		\lambda &= \displaystyle\frac{g(u^+)-f(u^-)}{u^+-u^-} = \frac{g(u^+)-g(u^*)}{u^+-u^*}\frac{u^+-u^*}{u^+-u^-} + \frac{g(u^*)-f(u^-)}{u^*-u^-}\frac{u^*-u^-}{u^+-u^-}  \\
    		\bigskip
    		&> g^\prime(u^+) \displaystyle\frac{u^+-u^*}{u^+-u^-} + g^\prime(u^*) \frac{u^*-u^-}{u^+-u^-} > g^\prime(u^+), 
    \end{array}
    $$
    where in the last inequality we used the monotonicity of $g^\prime$, i.e.  when  $u^* < u^+$ $\Rightarrow$ $g^\prime(u^*) > g^\prime(u^+)$.
    
    \item For $\theta^-=0$ and $\theta^+=1$, the proof is similar and therefore we omit it.
\end{proof}
\begin{remark}[Rarefaction waves are emanated only from $g$] \label{remark:rar}
Rarefaction waves occur only for the flux $g$. Indeed, if the left state exceeds the right state ($u^- > u^+$), the solution within a rarefaction fan is monotone decreasing, i.e., $u_x < 0$ for all $(x,t)$. Consequently, $\theta(u_x(t,x)) = 0$ throughout the fan. 

On the other hand, for a rarefaction wave to arise for the flux $f$, one would require $\theta(u_x(t,x)) \equiv 1$ in the region, which is not possible in this setting.
\end{remark}

\medskip

In Assumption~\ref{assump} we allow
$f'(0)\neq g'(0)$ and $f'(u_{\max})\neq g'(u_{\max})$. Consequently, the
two points $u^*$ and $v^*$ identified in Lemma \ref{lemma:up_jump} may lie
outside the interval $[0,u_{\max}]$. More precisely, there exist two
limit points $u^\mp_{\mathrm{lim}}$ such that, beyond these thresholds,
the points $u^*$ and $v^*$ can no longer be found in $[0,u_{\max}]$.
This property is formalized in the following result.


\begin{lemma}
Assume that $f'(u_{\max})\neq g'(u_{\max})$. Then the tangent point $u^*$ defined in \eqref{eq:ustar} exists if and only if $u^-<u^-_{\mathrm{lim}}$, where $u^-_{\mathrm{lim}}$ is defined by
\begin{equation}\label{eq:umenolim}
g'(u_{\max})
=
\frac{f(u^-_{\mathrm{lim}})-g(u_{\max})}{u^-_{\mathrm{lim}}-u_{\max}}
=
\frac{f(u^-_{\mathrm{lim}})}{u^-_{\mathrm{lim}}-u_{\max}}.
\end{equation}

Similarly, assume that $f'(0)\neq g'(0)$. Then the tangent point $v^*$ defined in \eqref{eq:vstar} exists if and only if $u^+>u^+_{\mathrm{lim}}$, where $u^+_{\mathrm{lim}}$ is defined by
\begin{equation}\label{eq:upiulim}
g'(0)
=
\frac{f(u^+_{\mathrm{lim}})-g(0)}{u^+_{\mathrm{lim}}}
=
\frac{f(u^+_{\mathrm{lim}})}{u^+_{\mathrm{lim}}}.
\end{equation}
If, however, $f'(0)=g'(0)$ and $f'(u_{\max})=g'(u_{\max})$, then
$u^+_{\mathrm{lim}}=0$ and $u^-_{\mathrm{lim}}=u_{\max}$.
\end{lemma}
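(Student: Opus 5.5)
The plan is to turn the existence of $u^*$ into a sign condition on an auxiliary function, and then use that the sign condition is monotone in $u^-$. For fixed $u^-\in[0,u_{\max})$, consider on $(u^-,u_{\max}]$ the function
\[
\Phi(u^-,w) = g'(w)(w-u^-) - g(w) + f(u^-),
\]
whose zeros are exactly the solutions $w=u^*$ of \eqref{eq:ustar}. Geometrically, $\Phi(u^-,w)$ is the height at $u^-$ of the tangent line to $g$ at $w$, minus $f(u^-)$, with the sign flipped. I will show three things: $\Phi$ has the right signs at the endpoints, it is strictly monotone in $w$, and the resulting condition on $u^-$ is monotone.

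First, the endpoint $w\to u^-$. Here $\Phi(u^-,u^-)=f(u^-)-g(u^-)>0$ for $u^-\in(0,u_{\max})$, by the unstable condition $f>g$. The derivative in $w$ is $\partial_w\Phi=g''(w)(w-u^-)<0$ by strict concavity of $g$. So $\Phi(u^-,\cdot)$ is strictly decreasing on $(u^-,u_{\max}]$, which gives uniqueness of any zero. It also gives the existence criterion: a zero in $(u^-,u_{\max}]$ exists if and only if
\[
\Phi(u^-,u_{\max}) = g'(u_{\max})(u_{\max}-u^-) + f(u^-) \le 0,
\]
using $g(u_{\max})=0$, by the intermediate value theorem.

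Second, the monotonicity in $u^-$. Define $h(s)=f(s)+g'(u_{\max})(u_{\max}-s)$. Then $h'(s)=f'(s)-g'(u_{\max})$. Two facts pin down $h$:
\begin{itemize}
\item Since $f$ is strictly concave with $f(0)=f(u_{\max})=0$, we have $f'(s)> f'(u_{\max})$ for $s<u_{\max}$.
\item Since $f\ge g$ and the two coincide at $u_{\max}$, we have $f'(u_{\max})\le g'(u_{\max})$, with strict inequality under the hypothesis $f'(u_{\max})\neq g'(u_{\max})$.
\end{itemize}
Together with $h(u_{\max})=0$, these give strict concavity of $h$ and $h'(u_{\max})<0$.

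I then also need a point where $h$ is negative. At $s=0$, $h(0)=g'(u_{\max})u_{\max}<0$, since $g'(u_{\max})<0$ by concavity and $g(0)=g(u_{\max})$. Combining $h(0)<0$, $h(u_{\max})=0$, $h'(u_{\max})<0$ and strict concavity, $h$ has exactly one zero $u^-_{\mathrm{lim}}\in(0,u_{\max})$. Moreover $h<0$ on $[0,u^-_{\mathrm{lim}})$ and $h>0$ on $(u^-_{\mathrm{lim}},u_{\max})$.

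The zero of $h$ is precisely \eqref{eq:umenolim}. So existence of $u^*$ in $[0,u_{\max}]$ is equivalent to $u^-\le u^-_{\mathrm{lim}}$. At equality, $u^*=u_{\max}$ is a boundary tangency, and I would treat it as degenerate to match the strict inequality in the statement. The case $u^-=0$ is handled directly, with $\Phi(0,0)=0$.

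The statement for $v^*$ follows from the symmetric argument. Use $\Psi(u^+,w)=g'(w)(u^+-w)+g(w)-f(u^+)$ on $[0,u^+)$, the endpoint $w=0$, and $g(0)=0$. Here $f'(0)\ge g'(0)$, strict by assumption. This produces $k(s)=f(s)-g'(0)s$, which is concave, vanishes at $0$ with $k'(0)>0$, and satisfies $k(u_{\max})<0$. Its interior zero is $u^+_{\mathrm{lim}}$, and existence of $v^*$ is equivalent to $u^+>u^+_{\mathrm{lim}}$.

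In the degenerate case $f'=g'$ at the endpoint, $h'(u_{\max})=0$. Strict concavity then forces $h>0$ on $[0,u_{\max})$ apart from the root at $u_{\max}$, so the only root is $u_{\max}$; likewise $k$ only vanishes at $0$. The main obstacle is the sign bookkeeping, specifically the step $f'(u_{\max})\le g'(u_{\max})$ deduced from $f\ge g$ with equality at the endpoint, together with the boundary/equality conventions.
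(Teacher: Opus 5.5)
Your argument is correct in the main case and is essentially the paper's one-line proof written out. Indeed $h(s)=(u_{\max}-s)\bigl(g'(u_{\max})-S(s)\bigr)$ with $S(s)=f(s)/(s-u_{\max})$, so your sign analysis of $h$ is exactly the monotonicity of the secant slope $S$ that the paper invokes. Your monotonicity of $\Phi(u^-,\cdot)$ supplies the reduction to the endpoint $w=u_{\max}$ that the paper leaves implicit. Two points need fixing, though.

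First, in the degenerate case you have the sign backwards. A strictly concave $h$ with $h(u_{\max})=0$ and $h'(u_{\max})=0$ lies strictly \emph{below} its horizontal tangent, so $h<0$ on $[0,u_{\max})$, not $h>0$. Your own computation $h(0)=g'(u_{\max})u_{\max}<0$ already shows this. The sign matters: by your criterion, $h>0$ would mean $u^*$ never exists. With $h<0$ you get existence for every $u^-\in(0,u_{\max})$, which is what $u^-_{\mathrm{lim}}=u_{\max}$ is meant to express. Likewise $k<0$ on $(0,u_{\max}]$, so $v^*$ exists for every $u^+\in(0,u_{\max})$.

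Second, the claim that $u^-=0$ is ``handled directly'' does not hold as stated. Since $\Phi(0,0)=0$ and $\Phi(0,\cdot)$ is strictly decreasing, there is \emph{no} root $w>0$. So no $u^*>u^-$ exists at $u^-=0$, even though $0<u^-_{\mathrm{lim}}$. Symmetrically, $\Psi(u_{\max},u_{\max})=0$ gives no $v^*<u^+$ at $u^+=u_{\max}$. The lemma holds at these points only if one accepts the degenerate tangency $u^*=u^-=0$ (respectively $v^*=u^+=u_{\max}$), which is harmless since $f=g$ at the endpoints. State this explicitly, as the same kind of endpoint convention you already adopt at $u^-=u^-_{\mathrm{lim}}$ and $u^+=u^+_{\mathrm{lim}}$.
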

\begin{proof}
The statement follows from the monotonicity of secant slopes for concave functions.
\end{proof}

In the description of the Riemann solvers below, we distinguish the cases $u^-<u^-_\mathrm{lim}$ and $u^-\geq u^-_\mathrm{lim}$,  $u^+\leq u^+_\mathrm{lim}$ and $u^+> u^+_\mathrm{lim}$.

\subsection{Case 1: $\theta^-=0$, $\theta^+=0$ ($g\rightarrow g$).}\label{sec:case1}
    For any pair $(u^-,u^+)$, a solution to the Riemann problem is obtained by letting $u=u(t,x)$ be the solution to the scalar conservation law
    \begin{equation}\label{eq:CLg}
    u_t+g(u)_x = 0,
    \end{equation}
    with initial data \eqref{eq:rp}. Two cases must be distinguished.
\begin{enumerate}[label=(\alph*)]
    \item $u^-<u^+$. In this case, the solution $u$ consists of a single
    upward jump satisfying the Lax condition \eqref{eq:LaxCond}. Since
    $u$ is increasing across an upward jump, the jump represents an
    interface, and we set
    \begin{equation}\label{eq:theta1a}
        \theta(t,x)=\left\{\begin{array}{cc}
           0  & x\neq\lambda t,\\
           1  & x=\lambda t,
        \end{array}\right.
    \end{equation}
    where $\lambda=(g(u^+)-g(u^-))/(u^+-u^-)$ is the
    Rankine--Hugoniot speed of the shock. This choice is consistent with
    the fact that $\theta(u_x)$ is undetermined where $u_x=0$ and can be
    chosen arbitrarily on intervals where $u$ is constant.
    
    \item $u^-\geq u^+$. In this case, the solution contains a centered
    rarefaction fan and there is no interface. We therefore set
    \begin{equation}\label{eq:theta1b}
        \theta(t,x)=0 \mbox{ for all } t,x.
    \end{equation}
\end{enumerate}
\subsection{Case 2: $\theta^-=1$, $\theta^+=0$ ($f\rightarrow g$).}\label{sec:case2}

    Recall that $u^*$ in \eqref{eq:ustar} is the unique tangent point satisfying $u^*>u^-$, namely
    \begin{equation}\label{eq:tangent-point}
    g^\prime(u^*) = \frac{f(u^-)-g(u^*)}{u^--u^*}, \quad u^- < u^*.
    \end{equation}
    In the general case $f'(u_{\max})\neq g'(u_{\max})$, the tangent point exists if and only if $u^->u^-_\mathrm{lim}$ where $u^-_\mathrm{lim}$ is defined in \eqref{eq:umenolim}. 
    We therefore consider $u^-< u_{\mathrm{lim}}$ and $u^-\geq u_{\mathrm{lim}}$ separately.
    
    \begin{enumerate}
    \item[(A)] If $u^-<u^-_{\mathrm{lim}}$, $u^- < u^* \leq u^+$, then the solution is a single jump from $u^-$ to $u^+$ with speed
    $$
    \lambda = \displaystyle\frac{f(u^-)-g(u^+)}{u^--u^+}, \quad u^--u^+ < 0.
    $$
    By Lemma~\ref{lemma:up_jump}, this jump is Lax admissible. In this case the location of the jump is an interface, and we have
    \[
        (u,\theta)(t,x) = \begin{cases}
              (u^-,1) &  x/t < \lambda \\
              (u^+,0) &  x/t > \lambda.
        \end{cases} 
    \]

    \item[(B)] If $u^- < u^-_{\mathrm{lim}}$, $u^- < u^*$, and $u^+ < u^*$, recalling from Lemma \ref{lemma:up_jump} that a direct jump is not admissible if $u^- < u^+ < u^*$, the solution consists of a shock connecting $(u^-,\theta^- = 1)$ to $(u^*,\theta^+ = 0)$, with speed
    \[
    \lambda = g^\prime(u^*).
    \] 
    This is followed by a rarefaction fan associated with the flux $g$, connecting $g^\prime(u^*)$ to $g^\prime(u^+)$ where $g^\prime(u^*) < g^\prime(u^+)$. 

    In summary, the solution consists of two waves, a shock followed by a rarefaction:
    \begin{equation}\label{eq:sol2b}
    (u,\theta)(t,x) =
    \begin{cases}
    (u^-,1) & \text{if } x/t < \lambda=g^\prime(u^*), \\
    (w,0) & \text{if } x/t=g'(w), \ g^\prime(u^*) < x/t < g^\prime(u^+), \\
    (u^+,0) & \text{if } x/t > g^\prime(u^+).
    \end{cases}
    \end{equation}
   Notice that, also in this case, the location of the jump represents an interface.
    
    \item[(C)] If $u^- \geq u^-_{\mathrm{lim}}$, then the value $u^*$ is not defined. In this case, $u$ consists of a shock connecting $u^-$ to $u_{\max}$, followed by a rarefaction fan (associated with the flux $g$) connecting $u_{\max}$ to $u^+$. The speed of the shock is then given by 
    \[
    \lambda = \frac{f(u_{\max}) - f(u^-)}{u_{\max} - u^-},
    \]
    which is Lax admissible.
    Since $f$ is concave and vanishes at $u_{\max}$, the slope of the secant line connecting any point $u < u_{\max}$ to $u_{\max}$ decreases strictly as $u$ approaches $u_{\max}$ from the left. Given that $0 < u^-_{\mathrm{lim}} < u^- < u_{\max}$, it follows that
    \[
    \lambda < \frac{f(u_{\max}) - f(u^-_{\mathrm{lim}})}{u_{\max} - u^-_{\mathrm{lim}}} = g'(u_{\max}),
    \]
    and the solution takes the form
    \begin{equation}\label{eq:case2C}
    (u, \theta)(t,x) =
    \begin{cases}
    (u^-, 1) & \text{if } x/t < \lambda, \\
    (u_{\max}, 1)& \text{if } \lambda < x/t < g^\prime(u_{\max}), \\
    (w, 0) & \text{if } x/t=g'(w), \ g^\prime(u_{\max}) < x/t < g^\prime(u^+), \\
    (u^+, 0) & \text{if } x/t > g^\prime(u^+),
    \end{cases}
    \end{equation}
    where the interface lies on the curve $x=g'(u_{\max})\,t$.
    \begin{remark}
    Case~2C is specific to the traffic setting, where the unknown $u$ represents the density of the vehicles and is therefore nonnegative ($u \ge 0$), while $f$ and $g$ are concave fluxes in a bounded interval $[0,u_{\max}]$. In this framework, the point $u^*$ in~\eqref{eq:tangent-point} may lie outside the admissible domain $[0,u_{\max}]$. This issue does not arise when $f$ and $g$ are defined on the whole line $\mathbb{R}$ as in~\cite{ABS2025}.
    \end{remark}
    \end{enumerate}
    We show below that the above Riemann solver and, in particular, the location of the interface, is consistent with the definition of weak solution \eqref{def:weak_sol}. 
    
    \begin{lemma}\label{lemma:sol2C}
The Riemann solver \eqref{eq:case2C} of Case 2C is a weak solution to \eqref{eq:original} with initial data 
\[
u(0, x)=
\begin{cases}
    u^{-} > u^-_{\mathrm{lim}}, &\quad x < 0,\\
    u^+, &\quad x > 0
\end{cases}
\]
in the sense of Definition \ref{def:weak_sol}.
\end{lemma}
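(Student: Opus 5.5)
We have to check the three requirements of Definition~\ref{def:weak_sol} for the self-similar pair $(u,\theta)$ in \eqref{eq:case2C}, with $\theta$ extended by $\theta(0,x)=\bar\theta(x)$. The plan is to treat the solution as four constant or rarefaction regions separated by the lines $x=\lambda t$ and $x=g'(u_{\max})t$, verify the structural conditions first, and then reduce the integral identity to a Rankine--Hugoniot check on each line.

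\textbf{Continuity, bounds and the $\theta$-condition.} The map $t\mapsto u(t,\cdot)$ is continuous into $L^1_{loc}$ and matches the initial data as $t\to0^+$, because $u$ is bounded and self-similar: the fan $[\lambda t, g'(u^+)t]$ shrinks to the origin. All values lie in $[u^+,u_{\max}]\cup\{u^-\}\subset[0,u_{\max}]$, since $w$ ranges between $u^+$ and $u_{\max}$ in the fan.

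For the $\theta$-condition, fix $t>0$ and examine $u_x(t,\cdot)$:
\begin{itemize}
\item At $x=\lambda t$ it is a positive Dirac mass, because $u^-<u_{\max}$. This point lies in the region where $\theta=1$, so we take $\theta=1$ on the closed set $x\le g'(u_{\max})t$.
\item On the fan it is a nonpositive absolutely continuous measure, since $w=(g')^{-1}(x/t)$ is decreasing in $x$ by strict concavity of $g$. Here $\theta=0$.
\item Elsewhere it vanishes.
\item There is no jump at $x=g'(u_{\max})t$: the fan starts continuously from $u_{\max}$.
\end{itemize}
Hence $[u_x]_+$ is carried by $\{\theta=1\}$ and $[u_x]_-$ by $\{\theta=0\}$, and the identity in Definition~\ref{def:weak_sol} holds.

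\textbf{Integral identity.} Set $F=\theta f(u)+(1-\theta)g(u)$. Split $(0,T)\times\mathbb R$ into the four open regions. In each region $u_t+F_x=0$ classically:
\begin{itemize}
\item the two outer regions and the middle strip are constant states;
\item the fan satisfies $w_t+g(w)_x=0$, because $g'(w)=x/t$.
\end{itemize}
Integrating by parts on each region, which is legitimate since $u$ is piecewise Lipschitz away from $t=0$ and $\varphi$ is compactly supported in $t>0$, leaves only line integrals over the two interfaces. These vanish exactly when the Rankine--Hugoniot relation holds across each line.

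\textbf{Rankine--Hugoniot on each line.}
\begin{itemize}
\item On $x=\lambda t$ the flux is $f$ on both sides. The speed $\lambda=\frac{f(u_{\max})-f(u^-)}{u_{\max}-u^-}$ is exactly the RH speed.
\item On $x=g'(u_{\max})t$, $u$ is continuous, equal to $u_{\max}$ on both sides. The flux jumps from $f(u_{\max})$ to $g(u_{\max})$, and both are $0$ by Assumption~\ref{assump}. So the jump of $F$ and the jump of $u$ are both zero, and there is no contribution.
\end{itemize}

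\textbf{Main obstacle.} The delicate point is the ordering $\lambda<g'(u_{\max})$, shown above via $u^->u^-_{\mathrm{lim}}$. This ordering is what makes the middle strip nonempty, so that the regions are well defined and $\theta$ is consistent across them. It is also the step that uses the boundary condition $f(u_{\max})=g(u_{\max})=0$ to let $\theta$ switch at a point of continuity without violating conservation.

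The boundary case $\lambda=g'(u_{\max})$ cannot occur for $u^->u^-_{\mathrm{lim}}$, by strict concavity. If the statement were read with $u^-=u^-_{\mathrm{lim}}$, the strip would degenerate to a line, and the same computation would still go through.
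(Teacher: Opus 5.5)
Your proposal is correct and follows essentially the paper's argument: the paper also integrates by parts region by region, cancelling the jump terms at $x=\lambda t$ by the Rankine--Hugoniot relation and those at $x=g'(u_{\max})t$ by continuity of $u$ together with $f(u_{\max})=g(u_{\max})$. The paper simply writes these cancellations out explicitly, including the fan's interior terms and the trailing edge $x=g'(u^+)t$; you should count that edge as a third, trivially continuous boundary line rather than saying only two interfaces remain. Your additional checks of the $\theta$-condition, the bounds $0\le u\le u_{\max}$, and $L^1_{\mathrm{loc}}$-continuity go beyond what the paper's proof verifies, and they are sound.
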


\begin{proof}
Recall that $\lambda < g'(u_{\max})$. We want to show that for any test function $\varphi(t, x) \in C^\infty_c((0, T) \times \mathbb{R})$,
\begin{align*}
    &\int_0^{T} \int_{-\infty}^{\lambda t} \big( u^{-} \partial_t \varphi + f(u^-) \partial_x \varphi \big) \, dx \, dt + \int_0^{T}\int_{\lambda t}^{g'(u_{\max})t} \big( u_{\max} \partial_t \varphi + f(u_{\max}) \partial_x \varphi \big) \, dx \, dt \\
    &+\int_0^{T}\int_{g'(u_{\max})t}^{g'(u^+)t} \Big( (g')^{-1}(x/t)\partial_t \varphi + g\big((g')^{-1}(x/t)\big) \partial_x \varphi \Big) \, dx \, dt \\
    &+ \int_0^{T} \int_{g' (u^+) t}^{+\infty} \big( u^+\partial_t \varphi + g(u^+) \partial_x \varphi \big) \, dx \, dt =: \mathcal{I}_1 + \mathcal{I}_2 + \mathcal{I}_3 + \mathcal{I}_4 = 0.
\end{align*}

Integrating by parts, the first integral gives:
\begin{align*}
    \mathcal{I}_1 = \int_0^T \big( -\lambda u^- + f(u^-) \big) \varphi(t, \lambda t) \, dt.
\end{align*}
The second one yields:
\begin{align*}
    \mathcal{I}_2 = \int_0^T \Big[ \big( \lambda u_{\max} - f(u_{\max}) \big) \varphi(t, \lambda t) + \big( -u_{\max} g'(u_{\max}) + f(u_{\max}) \big) \varphi(t, g'(u_{\max})t) \Big] \, dt.
\end{align*}
By definition of the shock speed $\lambda = \frac{f(u^-) - f(u_{\max})}{u^- - u_{\max}}$, the jump terms at $x = \lambda t$ cancel out, so that:
\[
\mathcal{I}_1 + \mathcal{I}_2 = \int_0^T \big( - u_{\max} g'(u_{\max}) + \underbrace{f(u_{\max})}_{= \, g(u_{\max})} \big) \varphi(t, g'(u_{\max})t) \, dt.
\]
Next, for $\mathcal{I}_3 = \mathcal{I}_3^t + \mathcal{I}_3^x$, integration by parts with respect to $t$ and $x$ leads to:
\begin{align*}
    \mathcal{I}_3^t &= \int_0^T \Big[ - g'(u^+) u^+ \varphi(t, g'(u^+)t) + g'(u_{\max}) u_{\max}\varphi(t, g'(u_{\max})t) \Big] \, dt \\
    &\quad + \int_0^{T}\int_{g'(u_{\max})t}^{g'(u^+)t} \frac{x}{t^2} \frac{\varphi(t, x)}{g''((g')^{-1}(x/t))} \, dx \, dt,
\end{align*}
and
\begin{align*}
    \mathcal{I}_3^x &= \int_0^T \Big[ g(u^+) \varphi(t, g'(u^+)t) - \underbrace{g(u_{\max})}_{= \, f(u_{\max})} \varphi(t, g'(u_{\max})t) \Big] \, dt \\
    &\quad - \int_0^{T}\int_{g'(u_{\max})t}^{g'(u^+)t} \frac{x}{t^2} \frac{\varphi(t, x)}{g''((g')^{-1}(x/t))} \, dx \, dt.
\end{align*}
Finally, the boundary terms at $x = g'(u^+)t$ cancel with:
\begin{align*}
    \mathcal{I}_4 = \int_0^T \big( g'(u^+) u^+ - g(u^+) \big) \varphi(t, g'(u^+)t) \, dt.
\end{align*}
Summing $\mathcal{I}_1 + \mathcal{I}_2 + \mathcal{I}_3^t + \mathcal{I}_3^x + \mathcal{I}_4$, all boundary trace terms and domain integrals cancel out, proving that $\mathcal{I}_1 + \mathcal{I}_2 + \mathcal{I}_3 + \mathcal{I}_4 = 0$. Note that the cancellation between $\mathcal{I}_2$ and $\mathcal{I}_3^x$ relies on the continuity identity $f(u_{\max}) = g(u_{\max})$.
\end{proof}

\begin{remark}[On the location of the interface]
We may ask whether the interface could be located at $x = \tilde{\lambda}t$ with $\tilde{\lambda} < g'(u_{\max})$, rather than at $x = g'(u_{\max})t$. In this case, the candidate solution would take the form
\[
(u,\theta)(t,x)=
\begin{cases}
(u^-,1), & x/t < \lambda,\\
(u_{\max},1), & \lambda < x/t < \tilde{\lambda},\\
(w,0), & x/t = g'(w),\quad \tilde{\lambda} < x/t < g'(u^+),\\
(u^+,0), & x/t > g'(u^+).
\end{cases}
\]
However, the rarefaction wave associated with the flux $g$ would then originate from the characteristic speed $\tilde{\lambda}$. Hence, there must exist a state $\bar{w}$ such that $g'(\bar{w}) = \tilde{\lambda}$.
Since $\tilde{\lambda} < g'(u_{\max})$ and $g'$ is strictly decreasing due to the concavity of $g$, we necessarily have
\[
\bar{w} > u_{\max}.
\]
Therefore, the rarefaction wave would involve states strictly greater than the admissible maximal density $u_{\max}$. This violates the physical constraint $u \leq u_{\max}$. Consequently, no interface velocity $\tilde{\lambda} < g'(u_{\max})$ is admissible, leaving $\tilde{\lambda} = g'(u_{\max})$ as the unique choice.
\end{remark}

\begin{figure}[h!]
\centering
\begin{tikzpicture}[scale=0.8, baseline={(0,0)}]
  \draw[->] (-0.5,0) -- (5,0) node[right] {};
  \draw[->] (0,-0.5) -- (0,3.5) node[above] {};
  \draw[domain=0:4, smooth, variable=\x, red, thick] 
       plot ({\x}, {0.8*\x*(4-\x)});
  \draw[domain=0:4, smooth, variable=\x, blue, thick] 
       plot ({\x}, {0.5*\x*(4-\x)}); 
  \node at (2.8,2) [above] {\color{red}$f$};
  \node at (3.5,0.3) [above] {\color{blue}$g$};
  \node at (-0.2,0) [below] {$0$};
  \node at (4.2,0) [below] {$u_{\max}$};
  \draw[thick, black] (0.5,{0.8*0.5*3.5}) -- (2,{0.48*4.5});
  \draw[thick, black, dashed] (0.5,0) -- (0.5,{0.8*0.5*3.5});
  \node at (0.5,0) [below] {$u^-$};
  
  \draw[thick, black, dashed] (2.8,0) -- (2.8,{0.5*2.8*(4-2.8)});
  \node at (3,0) [below] {$u^+$};
  
  \draw[thick, black, dashed] (1.5,0) -- (1.5,{0.5*1.5*(4-1.5)});
  \node at (1.5,0) [below] {$u^*$};
  
  \draw[thick, black] (0.5,{0.8*0.5*3.5}) -- (2.8,{0.5*2.8*(4-2.8)});
\end{tikzpicture}
\begin{tikzpicture}[scale=0.8, baseline={(0,0)}]
  \draw[->] (-0.5,0) -- (5,0) node[right] {};
  \draw[->] (0,-0.5) -- (0,3.5) node[above] {};
  \draw[domain=0:4, smooth, variable=\x, red, thick] 
       plot ({\x}, {0.8*\x*(4-\x)});
  \draw[domain=0:4, smooth, variable=\x, blue, thick] 
       plot ({\x}, {0.5*\x*(4-\x)}); 
  \node at (1.2,2.8) [above] {\color{red}$f$};
  \node at (0.5,0.3) [above] {\color{blue}$g$};
  \node at (-0.2,0) [below] {$0$};
  \node at (4.2,0) [below] {$u_{\max}$};
  \draw[thick, black] (1.8,{0.8*1.8*(4-1.8)}) -- (3.5,{0.5*3.5*(4-3.5)});
  \draw[thick, black, dashed] (1.8,0) -- (1.8,{0.8*1.8*(4-1.8)});
  \node at (1.8,0) [below] {$u^-$};
  
  \draw[thick, black, dashed] (1,0) -- (1,{0.5*1*(4-1)});
  \node at (1,0) [below] {$u^+$};

  \draw[thick, black, dashed] (3.3,0) -- (3.3,{0.5*3.3*(4-3.3)});
  \node at (3.3,0) [below] {$u^*$};
  
  \draw[thick, black] (2.5,{0.8*2.5*(4-2.5)}) -- (4,0);
  \draw[thick, black, dashed] (2.5,0) -- (2.5,{0.8*2.5*(4-2.5)});
  \node at (2.5,0) [below] {$u^-_{\mathrm{lim}}$};
  \fill[black] (4,0) circle (2pt);
  \fill[black] (2.5,{0.8*2.5*(4-2.5)}) circle (2pt);
\end{tikzpicture}
\begin{tikzpicture}[scale=0.8, baseline={(0,0)}]
  \draw[->] (-0.5,0) -- (5,0) node[right] {};
  \draw[->] (0,-0.5) -- (0,3.5) node[above] {};
  \draw[domain=0:4, smooth, variable=\x, red, thick] 
       plot ({\x}, {0.8*\x*(4-\x)});
  \draw[domain=0:4, smooth, variable=\x, blue, thick] 
       plot ({\x}, {0.5*\x*(4-\x)}); 
  \node at (1.2,2.8) [above] {\color{red}$f$};
  \node at (0.5,0.3) [above] {\color{blue}$g$};
  \node at (-0.2,0) [below] {$0$};
  \node at (4.2,0) [below] {$u_{\max}$};
  \draw[thick, black] (2.5,{0.8*2.5*(4-2.5)}) -- (4,0);
  \draw[thick, black, dashed] (2.5,0) -- (2.5,{0.8*2.5*(4-2.5)});
  \node at (2.5,0) [below] {$u^-_{\mathrm{lim}}$};
  \fill[black] (4,0) circle (2pt);
  \fill[black] (2.5,{0.8*2.5*(4-2.5)}) circle (2pt);
  \draw[thick, black, dashed] (1.3,0) -- (1.3,{0.5*1.3*(4-1.3)});
  \node at (1.3,0) [below] {$u^+$};
  \draw[thick, black] (3.2,{0.8*3.2*(4-3.2)}) -- (4,0);
  \draw[thick, black, dashed] (3.2,0) -- (3.2,{0.8*3.2*(4-3.2)});
  \node at (3.2,0) [below] {$u^-$};
\end{tikzpicture}
\caption{Representation of the Riemann solution states in Case 2. Left: Case 2A; center: Case 2B; right: Case 2C.}
\label{fig:RScase2}
\end{figure}
\begin{figure}[h!]
\centering
\begin{tikzpicture}[scale=0.7, baseline={(0,0)}]
  \draw[->] (-3.4,0) -- (3.4,0) node[below] {$x$};
  \draw[->] (-1,-0.5) -- (-1,4) node[above] {};

  \draw[thick, black] (-3,1) -- (0.4,1);
  \draw[thick, black] (0.4,3) -- (3.2,3);
  \draw[thick, black] (0.4,1) -- (0.4,3);
  
  \draw[thick, black, dashed] (0.4,-0.1) -- (0.4,1);
  \node at (0.4,0) [below] {$\lambda t$};
  \node at (-2.8,1) [above] {$u^-$};
  \node at (3,3) [above] {$u^+$};
  
\end{tikzpicture}
\begin{tikzpicture}[scale=0.7, baseline={(0,0)}]
  \draw[->] (-3.4,0) -- (3.4,0) node[right] {};
  \draw[->] (-1,-0.5) -- (-1,4) node[above] {};
  
  \draw[thick, black] (-3,1.2) -- (0.1,1.2);
  \draw[thick, black] (0.1,1.2) -- (0.1,3);
  \draw[thick, black] (0.1,3) -- (2,0.5);
  \draw[thick, black] (2,0.5) -- (3.1,0.5);
  
  \node at (0.1,0) [below] {$g'(u^*) t$};
  \draw[thick, black, dashed] (0.1,0) -- (0.1,1.2);
  
  \node at (2,0) [below] {$g^\prime(u^+)t$};
  \draw[thick, black, dashed] (2,0) -- (2,0.5);
  \node at (-2.8,1.2) [above] {$u^-$};
  \node at (3,0.5) [above] {$u^+$};
  \node at (0.1,3) [above] {$u^*$};
\end{tikzpicture}
\begin{tikzpicture}[scale=0.7, baseline={(0,0)}]
  \draw[->] (-3.4,0) -- (3.4,0) node[right] {};
  \draw[->] (-1,-0.5) -- (-1,4) node[above] {};
  
  \draw[thick, black] (-3,1.2) -- (0.1,1.2);
  \draw[thick, black] (0.1,1.2) -- (0.1,3);
  \draw[thick, black] (0.1,3) -- (1,3);
  \draw[thick, black] (1,3) -- (2,0.5);
  \draw[thick, black] (2,0.5) -- (3.1,0.5);
  
  \node at (0.1,0) [below] {$\lambda t$};
  \draw[thick, black, dashed] (0.1,0) -- (0.1,1.2);
  
  \node at (2,0) [below] {$g^\prime(u^+)t$};
  \draw[thick, black, dashed] (2,0) -- (2,0.5);
  \node at (-2.8,1.2) [above] {$u^-$};
  \node at (3,0.5) [above] {$u^+$};
  \fill[blue] (1,3) circle (3pt);
  \node at (1,3) [above] {$g'(u_{\max}) t$};
  \draw[thick, black, dashed] (1,0) -- (1,3);
  \draw[thick, blue, dashed] (-3,3) -- (3,3);
  \node at (-1.6,3)[above]  {$u_{max}$};
\end{tikzpicture}
\caption{Solution of the Riemann problem in Cases 2A, 2B, and 2C, respectively.}\label{fig:solCase2}
\end{figure}


\subsection{Case 3: $\theta^-=0$, $\theta^+=1$ ($g\rightarrow f$).}\label{sec:case3}
Recall that $v^*$ in \eqref{eq:vstar} is the unique tangent point satisfying $v^*<u^+$, namely 
    \begin{equation*}
    g^\prime(v^*) = \frac{f(u^+)-g(v^*)}{u^+-v^*}, \quad v^*<u^+.
    \end{equation*}
    Under the assumption $f'(0)\neq g'(0)$, $v^*$ exists if and only if $u^+>u^+_\mathrm{lim}$ where $u^+_\mathrm{lim}$ is given in \eqref{eq:upiulim}.
    We then distinguish the following two cases: $u^+\leq u^+_\mathrm{lim}$ and $u^+>u^+_\mathrm{lim}$.

\begin{enumerate}
    \item[(A)] If $u^+>u^+_{\mathrm{lim}}$, $u^- \leq v^* < u^+$, then the solution is a single jump from $u^-$ to $u^+$ with speed
    $$
    \lambda = \displaystyle\frac{g(u^-)-f(u^+)}{u^--u^+}, \quad u^--u^+ < 0.
    $$
    By Lemma \ref{lemma:up_jump}, this jump is Lax admissible. In this case the location of the jump is an interface, and we have
    \[
        (u,\theta)(t,x) = \left\{\begin{array}{cc}
              (u^-,0) &  x/t < \lambda \\
              (u^+,1) &  x/t > \lambda.
        \end{array}\right. 
    \]
    
    \item[(B)] If $u^+>u^+_{\mathrm{lim}}$, $v^* < u^+$ and $u^- \geq v^*$, then we have a rarefaction fan on the flux $g$ between $g^\prime(u^-)$ and $g^\prime(v^*)$ ($g^\prime(u^-) < g^\prime(v^*)$), and a shock that jumps from $v^*$ to $u^+$ with speed
    $$
    \lambda = g^\prime(v^*),
    $$
    which is admissible by Lemma \ref{lemma:up_jump}. In summary, the solution consists of a rarefaction followed by a shock:
    \begin{equation}\label{eq:sol3b}
     (u,\theta)(t,x) = \left\{\begin{array}{cc}
                 (u^-,1)     & x/t < g^\prime(u^-) \\
                 (w,1) &  x/t=g'(w), \ g^\prime(u^-) < x/t< \lambda\\
                 (u^+,0)  &  x/t > \lambda.
            \end{array}
            \right.   
    \end{equation}
    In this case, the jump is also an interface.
    
    \item[(C)] If $u^+ \leq u^+_{\mathrm{lim}}$, the value $v^*$ is not defined. In such a case, i.e. if $u^+<u^+_{\mathrm{lim}}$ we have a rarefaction fan on $g$ between $g^\prime(u^-) < g^\prime(0)$ and a (positive) shock that jumps from the vacuum $0$ to $u^+$. Similarly to the previous Case 2C, we define the speed of the jump as
    $$
    	\lambda = \frac{f(u^+)-f(0)}{u^+-0} = \frac{f(u^+)}{u^+},
    $$
    which is Lax-admissible. 
    The same argument used in Case 2C to determine the interface speed under the constraint $u \leq u_{\max}$ (see Lemma~\ref{lemma:sol2C}) applies to the constraint $u \geq 0$. Therefore, the only admissible choice is for the interface to propagate
    along the curve $x=g'(0)t$. By the properties of $f$, the jump speed verifies $\lambda>g'(0)$ and the solution takes the form
   \[
            (u,\theta)(t,x) = \left\{\begin{array}{cc}
                 (u^-,0)     & x/t < g^\prime(u^-) \\
                 (w,0)       & x/t=g'(w),\ g^\prime(u^-) < x/t< g'(0) \\
                 (0,1)       & g'(0)< x/t < \lambda\\
                 (u^+,1)     &  x/t > \lambda.
            \end{array}
            \right.
    \]
 We observe that, under these specific conditions, a 'vacuum state' (or zero-density zone) can emerge within the space. This aligns with vehicle traffic dynamics: since the rarefaction between $u^-$ and $0$ takes place along the lower flow $g$, the traveling vehicles — despite traveling at their maximum free-flow velocity — cannot bridge the gap to reach the faster-moving vehicles ahead, which are governed by the higher flow $f$.
\end{enumerate}
\begin{figure}[h!]
\centering
\begin{tikzpicture}[scale=0.7]
  \draw[->] (-3.4,0) -- (3.4,0) node[right] {$x$};
  \draw[->] (-1,-0.5) -- (-1,4) node[left] {$u$};

  \draw[thick, black] (-3,1) -- (0.4,1);
  \draw[thick, black] (0.4,3) -- (3.2,3);
  \draw[thick, black] (0.4,1) -- (0.4,3);
  
  \draw[thick, black, dashed] (0.4,-0.1) -- (0.4,1);
  \node at (0.4,0) [below] {$\lambda t$};
  \node at (-2.8,1) [above] {$u^-$};
  \node at (3,3) [above] {$u^+$};
\end{tikzpicture}
\begin{tikzpicture}[scale=0.7]
  \draw[->] (-3.4,0) -- (3.4,0) node[right] {};
  \draw[->] (-1,-0.5) -- (-1,4) node[left] {$u$};
  
  \draw[thick, black] (-3,3) -- (0.1,3);
  \draw[thick, black] (0.1,3) -- (2,0.5);
  \draw[thick, black] (2,0.5) -- (2,2);
  \draw[thick, black] (2,2) -- (3.1,2);
  
  \node at (0.1,0) [below] {$g^\prime(u^-) t$};
  \draw[thick, black, dashed] (0.1,0) -- (0.1,3);
  
  \node at (2,0) [below] {$\lambda t$};
  \draw[thick, black, dashed] (2,0) -- (2,0.5);
  \node at (-2.8,3) [above] {$u^-$};
  \node at (3,2) [above] {$u^+$};
  \node at (2.3,0.5)  {$v^*$};
  \end{tikzpicture}
  \begin{tikzpicture}[scale=0.7]
  \draw[->] (-3.4,0) -- (3.4,0) node[right] {};
  \draw[->] (-1,-0.5) -- (-1,4) node[left] {$u$};
  
  \draw[thick, black] (-3,3) -- (0.1,3);
  \draw[thick, black] (0.1,3) -- (1.4,0);
  \draw[line width=2pt, blue] (1.4,0) -- (2,0);
  \draw[thick, black] (2,0) -- (2,2);
  \draw[thick, black] (2,2) -- (3.1,2);
  \node at (1.3,0) [below] {$g^\prime(0) t$};
  
  
  \draw[thick, black, dashed] (2,0) -- (2,0.5);
  \node at (-2.8,3) [below] {$u^-$};
  \node at (3,2) [above] {$u^+$};
  \node at (-1.2,0)[below]  {$0$};
  \fill[blue] (1.4,0) circle (3pt);
\end{tikzpicture}
\caption{Solution to the Riemann problem in Case 3A, 3B, and 3C  respectively.}\label{fig:solCase3}
\end{figure}

\subsection{Case 4: $\theta^-=1$, $\theta^+=1$ ($f\rightarrow f$).}\label{sec:case4}
Let $u^*>u^-$, $v^* < u^+$ be defined as in Lemma \ref{lemma:up_jump}, and $u^-_{\mathrm{lim}}$, $u^+_{\mathrm{lim}}$ be given by \eqref{eq:umenolim} and \eqref{eq:upiulim} respectively. 
\begin{enumerate}
	\item[(A)] $u^-\leq u^+$. In this case $u=u(t,x)$ is the solution to the scalar conservation law
	\begin{equation}\label{eq:CLf}
	u_t+f(u)_x = 0
	\end{equation}
	with initial data \eqref{eq:rp} and $\theta(t,x)\equiv 1$ for all $t,x$. Indeed, for $u^-\leq u^+$ the solution of \eqref{eq:CLf} is a single upward jump. Hence, $u_x\geq 0$ at a.e. point $(t,x)$ and $\theta(t,x)\equiv 1$ for all $(t,x)$ satisfies Definition \ref{def:weak_sol}.
	
	\item[(B)] $u^-> u^+$. In this case, the solution to \eqref{eq:CLf} involves a rarefaction; however, it does not solve our problem (see Remark~\ref{remark:rar}). The correct solution instead consists of two jumps: first from $f$ to $g$, connecting $u^-$ to $u^*$, and then from $g$ to $f$, connecting $v^*$ to $u^+$, with a rarefaction fan for $g$ between $u^*$ and $v^*$. Referring to Cases~2B and~3B, when $u^*$ and $v^*$ are well defined (i.e., $u^- < u^-_{\mathrm{lim}}$ and $u^+ > u^+_{\mathrm{lim}}$), we obtain the following solution:
    \begin{equation}\label{eq:sol4b}
        (u,\theta)(t,x) = \left\{\begin{array}{cc}
             (u^-,1)     & x/t < g^\prime(u^*), \\
             (w,0) &  x/t=g'(w), \ g^\prime(u^*) < x/t< g^\prime(v^*),\\
             (u^+,1)  &  x/t > g^\prime(v^*).
        \end{array}
        \right.   
    \end{equation}
    
    \item[(C)] When $u^*$ and/or $v^*$ are not defined ($u^->u^-_{\mathrm{lim}}$ and/or $u^+<u^+_{\mathrm{lim}}$), the solution is described in Case 2C and 3C respectively, see Figure \ref{fig:RScase4c}.
\end{enumerate}
\begin{figure}[h!]
\centering

\begin{tikzpicture}[scale=1.0]
  \draw[->] (-0.5,0) -- (5,0) node[right] {};
  \draw[->] (0,-0.5) -- (0,3.5) node[above] {};
  \draw[domain=0:4, smooth, variable=\x, red, thick] 
       plot ({\x}, {0.8*\x*(4-\x)});
  \draw[domain=0:4, smooth, variable=\x, blue, thick] 
       plot ({\x}, {0.5*\x*(4-\x)}); 
  \node at (-0.2,0) [below] {$0$};
  \node at (4.2,0) [below] {$u_{\max}$};
  
  \draw[thick, black, dashed] (1.5,0) -- (1.5,{0.8*1.5*(4-1.5)});
  \node at (1.5,0) [below] {$u^+$};
  \draw[thick, black] (1.5,{0.8*1.5*(4-1.5)}) -- (0.3,{0.5*0.3*(4-0.3)});
  \draw[thick, black, dashed] (0.4,0) -- (0.4,{0.5*0.4*(4-0.4)});
  \node at (0.4,0) [below] {$v^*$};
  
  \draw[thick, black, dashed] (2.4,0) -- (2.4,{0.8*2.4*(4-2.4)});
  \node at (2.4,0) [below] {$u^-$};
  \draw[thick, black] (2.4,{0.8*2.4*(4-2.4)}) -- (3.4,{0.5*3.4*(4-3.4)});
  \draw[thick, black, dashed] (3.3,0) -- (3.3,{0.5*3.3*(4-3.3)});
  \node at (3.3,0) [below] {$u^*$};
  
\end{tikzpicture}
\begin{tikzpicture}[scale=1.0]
  \draw[->] (-3.4,0) -- (3.4,0) node[right] {};
  \draw[->] (-1,-0.5) -- (-1,4) node[left] {$u$};
  
  \draw[thick, black] (-3,2) -- (0.1,2);
  \draw[thick, black] (0.1,2) -- (0.1,3);
  \draw[thick, black] (0.1,3) -- (1.5,0.5);
  \draw[thick, black] (1.5,0.5) -- (1.5,1.5);
  \draw[thick, black] (1.5,1.5) -- (3,1.5);
  
  \node at (0.1,0) [below] {$g^\prime(u^*) t$};
  \draw[thick, black, dashed] (0.1,0) -- (0.1,3);
  
  \node at (1.5,0) [below] {$g^\prime(v^*) t$};
  \draw[thick, black, dashed] (1.5,0) -- (1.5,0.5);
  \node at (-2.8,2) [above] {$u^-$};
  \node at (3,1.5) [above] {$u^+$};
  \node at (1.8,0.5)  {$v^*$};
  \node at (0.1,3) [above] {$u^*$};
\end{tikzpicture}
\caption{Solution to the Riemann problem in Case 4B.}\label{fig:RScase4}
\end{figure}
\begin{figure}[h!]
\centering

\begin{tikzpicture}[scale=1.0]
  \draw[->] (-0.5,0) -- (5,0) node[right] {};
  \draw[->] (0,-0.5) -- (0,3.5) node[above] {};
  \draw[domain=0:4, smooth, variable=\x, red, thick] 
       plot ({\x}, {0.8*\x*(4-\x)});
  \draw[domain=0:4, smooth, variable=\x, blue, thick] 
       plot ({\x}, {0.5*\x*(4-\x)}); 
  \node at (-0.2,0) [below] {$0$};
  \node at (4.2,0) [below] {$u_{\max}$};
  
  \draw[thick, black, dashed] (0.5,0) -- (0.5,{0.8*0.5*(4-0.5)});
  \node at (0.5,0) [below] {$u^+$};
  \draw[thick, black] (0,0) -- (1.3,{0.8*1.3*(4-1.3)});
  \draw[thick, black, dashed] (1.3,0) -- (1.3,{0.8*1.3*(4-1.3)});
  \node at (1.3,0) [below] {$u^+_{\mathrm{lim}}$};
  
  \draw[thick, black, dashed] (3.5,0) -- (3.5,{0.8*3.5*(4-3.5)});
  \node at (3.5,0) [below] {$u^-$};
  \draw[thick, black] (4,{0.8*4*(4-4)}) -- (2.7,{0.8*2.7*(4-2.7)});
  \draw[thick, black, dashed] (2.7,0) -- (2.7,{0.8*2.7*(4-2.7)});
  \node at (2.7,0) [below] {$u^-_{\mathrm{lim}}$};
  
\end{tikzpicture}
\begin{tikzpicture}[scale=1.0]
  \draw[->] (-3.4,0) -- (3.4,0) node[right] {};
  \draw[->] (-1,-0.5) -- (-1,4) node[left] {$u$};
  
  \draw[thick, black] (-3,2) -- (-0.5,2);
  \draw[thick, black] (-0.5,2) -- (-0.5,3);
  \draw[line width=2pt, blue] (-0.5,3) -- (0.1,3);
  \draw[thick, black] (0.1,3) -- (1.5,0);
  \draw[line width=2pt, blue] (1.5,0) -- (2.1,0);
  \draw[thick, black] (2.1,0) -- (2.1,1.2);
  \draw[thick, black] (2.1,1.2) -- (3,1.2);
  
  \node at (0,0) [below] {$g^\prime(u_{\mathrm{max}}) t$};
  \draw[thick, black, dashed] (0.1,0) -- (0.1,3);
  \fill[blue] (0.1,3) circle (3pt);
  
  \node at (1.6,0) [below] {$g^\prime(0) t$};
   \fill[blue] (1.5,0) circle (3pt);
  \node at (-2.8,2) [above] {$u^-$};
  \node at (3,1.2) [above] {$u^+$};
  \node at (0.1,3) [above] {$u_{\mathrm{max}}$};
\end{tikzpicture}
\caption{Solution to the Riemann problem in Case 4C.}\label{fig:RScase4c}
\end{figure}
\section{Existence of weak solutions}
The aim of this section is to establish an existence result for the Cauchy problem associated with \eqref{eq:original}, for initial data satisfying (H1)--(H2). We follow the general strategy developed in \cite{ABS2025}, while introducing different Riemann solvers to account for the fact that, in the present setting, the admissible domain $[0,u_{\mathrm{max}}]$ is bounded.

\begin{theorem}
There exists a weak solution $(u(t,x),\theta(t,x))$ to the Cauchy problem associated with \eqref{eq:original}, in the sense of Definition~\ref{def:weak_sol}, for any initial data satisfying (H1)--(H2).
\end{theorem}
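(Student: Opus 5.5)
We follow the strategy of \cite{ABS2025}: construct the solution by tracking a finite number of interfaces $y_1(t)<\dots<y_N(t)$. Between consecutive interfaces the solution solves a classical scalar conservation law with a single concave flux, $f$ if $\theta=1$ and $g$ if $\theta=0$. Each interface is a Lax-admissible upward jump whose speed comes from the Rankine--Hugoniot relation \eqref{eq:lambda}, with left and right traces taken from the neighbouring solutions. The only new ingredients, compared to \cite{ABS2025}, are the saturated and vacuum configurations of Cases 2C, 3C and 4C, where the tangent points $u^*$ or $v^*$ fall outside $[0,u_{\max}]$.

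\textbf{Step 1: approximation and invariance of $[0,u_{\max}]$.} We first approximate $\bar u$ by piecewise monotone, piecewise smooth data, keeping the same interface set $\bar y_i$. On each $J_i$:
\begin{itemize}
\item if $\bar\theta=1$, the data are increasing, so the $f$-solution develops only upward shocks and compressions;
\item if $\bar\theta=0$, the data are decreasing, so the $g$-solution develops rarefactions and decreasing profiles.
\end{itemize}
Monotonicity is therefore preserved inside each region, and the compatibility condition $u_x\ge0$ on $\{\theta=1\}$, $u_x\le 0$ on $\{\theta=0\}$ is maintained. Invariance of $[0,u_{\max}]$ follows from the maximum principle for each scalar law. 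At the interfaces it is guaranteed by the Riemann solvers of Section~\ref{sec:solvers}: whenever the tangent construction would exit the domain, the solver saturates at $u_{\max}$ (Case 2C) or opens a vacuum (Case 3C). Lemma~\ref{lemma:sol2C} and its analogue for Case 3C show that these local pieces satisfy the weak formulation.

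\textbf{Step 2: interface ODEs via a fixed point.} Each interface $y_i$ obeys an ODE $\dot y_i=\lambda(u(t,y_i^-),u(t,y_i^+))$. Alternatively, in the tangent or saturated regimes the interface is sonic, so the ODE is $\dot y_i=g'(\text{trace})$, with value $g'(u^*)$, $g'(v^*)$, $g'(u_{\max})$ or $g'(0)$. This is an ODE with discontinuous right-hand side, since the regime switches depending on where the traces lie relative to $u^-_{\mathrm{lim}}$, $u^+_{\mathrm{lim}}$, $u^*$ and $v^*$. We proceed as follows.
\begin{itemize}
\item Given Lipschitz interface curves, solve the boundary problems in each strip.
\item Update the curves from the traces.
\item Show that the map is a contraction on a short time interval, using the directional BV estimates of \cite{bressan1988}.
\end{itemize}
The point is that the traces have bounded directional variation along the curves, which makes $t\mapsto\dot y_i$ BV and gives uniqueness and stability of Carathéodory solutions. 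In the saturated case the interface speed $g'(u_{\max})$ is constant, and by the Remark on the location of the interface it is forced. This actually simplifies the fixed point there, provided we check that the regime transitions occur at finitely many times.

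\textbf{Step 3: global continuation and passage to the limit.} Interactions include interfaces meeting each other, rarefactions hitting interfaces, and the $u_{\max}$ or $0$ plateaus disappearing. At each such time we restart with the appropriate Riemann solver from Section~\ref{sec:solvers}. Since the solvers never increase the number of interfaces beyond a bound depending on $N$ (the two jumps of Case 4B, 4C at most double locally, and interfaces merge afterward), we need to show that the interaction times do not accumulate. This gives a solution on $[0,T]$ for every $T$. Uniform bounds $0\le u\le u_{\max}$ and uniform total variation on compacts follow from piecewise monotonicity with a bounded number of pieces. Helly's theorem then yields $L^1_{loc}$ convergence of the approximations, and with $\theta$ converging weakly-$*$ in $[0,1]$ the weak formulation passes to the limit, since fluxes are affine in $\theta$ and continuous in $u$.

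The main obstacle I expect is Step 2 near the thresholds. When a trace crosses $u^-_{\mathrm{lim}}$ or $u^+_{\mathrm{lim}}$, the interface switches between a genuine shock, a sonic tangent jump and the saturated speed $g'(u_{\max})$ or $g'(0)$. We must show that the speed is continuous across the switch. This holds because $g'(u^*)\to g'(u_{\max})$ as $u^-\uparrow u^-_{\mathrm{lim}}$, by the definition \eqref{eq:umenolim}, so the right-hand side stays BV and the Picard argument still closes. I would first try to prove this continuity explicitly and then reuse the estimates of \cite{ABS2025} verbatim.
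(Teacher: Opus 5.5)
There is a genuine gap in your Step 2, in the tangent (sonic) configurations: Case 2B, Case 3B, and both interfaces of Case 4B. These are generic situations, for example $f\to g$ with $u^+<u^*$.

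\begin{itemize}
\item \emph{Why the directional BV theory fails there.} The interface leaves the origin with speed $g'(u^*)$ (resp.\ $g'(v^*)$), which is itself a characteristic speed of the adjacent centered $g$-rarefaction. So the transversality $g'(u^\sharp)<H<f'(u^\flat)$ fails at $t=0$. Any cone of directions containing the interface direction also contains fan rays. Curves in that cone can zigzag across the thin fan near the vertex and pick up order-one variation in every dyadic time interval. Hence $H$ does \emph{not} have locally bounded directional variation, and \cite{bressan1988} gives you neither uniqueness nor a contraction. Your claim that ``the traces have bounded directional variation along the curves'' is false exactly here.

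\item \emph{What the paper does instead.} Following \cite{ABS2025}, it runs a Picard iteration on the class $\mathcal F$ of Lipschitz curves with $|\dot y-g'(u^*)|\le\varepsilon$ and $y(t)/t\to g'(u^*)$. The metric is the weighted distance $d(y,z)=\sup_t|y(t)-z(t)|/t$. The contraction uses two ingredients your plan lacks:
\begin{itemize}
\item the tangency cancellation $\phi_v(u^-,u^*)=0$ for $\phi(u,v)=(f(u)-g(v))/(u-v)$, which together with the fan bound $|u^\sharp(s,y(s))-u^\sharp(s,z(s))|\le C|y(s)-z(s)|/s\le C\,d(y,z)$ makes the dependence on the right trace small;
\item Lemma~\ref{lemma2B} for the left trace, which uses that $u^\flat$ increases in $x$ and decreases along curves with speed $\ge g'(u^*)-\varepsilon$.
\end{itemize}

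\item \emph{Your sonic interface law is not admissible.} The weak formulation forces the Rankine--Hugoniot law $\dot y=\phi(u^\flat,u^\sharp)$ at all times. One has $\phi(u^\flat,u^\sharp)=g'(u^\sharp)$ only when the right trace is exactly the tangent point of the left trace, which happens essentially only for Riemann data. So $\dot y_i=g'(\mathrm{trace})$ would violate Rankine--Hugoniot. In the paper, tangency is only the asymptotic statement $\dot y(0+)=g'(u^*)$. The saturated Case 2C is different: there both traces equal $u_{\max}$ and there is no jump.
\end{itemize}

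Consequently, the obstacle you single out, continuity of the speed as $u^-\uparrow u^-_{\mathrm{lim}}$, is not the crux. The non-transversal sonic interface is.

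You also miss the structural simplification the paper relies on. No characteristic of either side emanates from an interface: they impinge on it, or in the sonic case run parallel to it out of the centered fan. Therefore $u^\flat$, $u^\sharp$ (and $u^\natural$ in Case 4B) are computed once and for all as Cauchy problems with constant extensions such as $u^-$, $u^+$, $u_{\max}$ or $0$, independently of $y$. Each interface then solves a scalar ODE $\dot y=H(t,y)$ with a fixed right-hand side. No strip boundary problems and no coupled free-boundary iteration are needed.

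This also makes your approximation-plus-Helly step unnecessary, since the paper works directly with the BV data. As written, that step is also incomplete. A weak-$*$ limit of $\theta_n$ may take values in $(0,1)$ and need not preserve the compatibility condition $u_x=\chi_{\{\theta=1\}}[u_x]_+-\chi_{\{\theta=0\}}[u_x]_-$ of Definition~\ref{def:weak_sol}. To pass to the limit you would need uniform convergence of a uniformly bounded number of Lipschitz interfaces, so that $\theta_n\to\theta$ a.e.\ and monotonicity is preserved between the limit interfaces.
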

\begin{remark}[Minimization of the number of interfaces is a selection principle]
    As in \cite{ABS2025}, uniqueness does not hold. However, minimizing the number of interfaces, we can select a unique solution.
\end{remark}
The proof is divided in several steps. The heart of the strategy is to determine and study an ODE for the (moving) interface that separates the region where the flux is $f(u)$ from the region where the flux is $g(u)$.

We introduce the initial data. Let $\bar u(x)$ be monotone separately on the left and on the right. The initial data read:
\begin{equation}\label{eq:initialdata}
    \begin{cases}
        u(0, x)= \bar u(x), \\
        \theta (0, x)=\bar \theta (x),
    \end{cases}
    \quad \lim_{x \rightarrow 0\pm} \bar u(x)=u^{\pm}, \quad \bar \theta (x)=\begin{cases}
        \theta^-, \quad x<0,\\
        \theta^+, \quad x>0,
    \end{cases} \quad \theta^{\pm} \in \{0, 1\}.
\end{equation}
We define the solution $u(t,x)$ through a moving interface $y(t)$, whose evolution is determined by the equation 
\begin{equation}\label{eq:Yode}
\dot y(t) = H(t,y(t)), \qquad y(0)=0 ,
\end{equation}
where the function $H(t,x)$ is built through
algebraic operations and compositions involving the traces of 
functions which are only 
$L^1_{\mathrm{loc}}(\R)$ with bounded variation.
In particular, the resolution of \eqref{eq:Yode} lies beyond the scope
of the classical Cauchy--Lipschitz theory for ODEs.
To deal with this low regularity, following \cite{ABS2025}, we appeal to an existence and uniqueness result for ODEs based on the notion of \emph{locally directional bounded variation} \cite{bressan1988}, which we introduce below. 

\begin{definition}\label{def:localBV}
A vector field $H : \mathbb{R}^2 \to \mathbb{R}$ is said to have 
\emph{locally bounded directional variation} if, for every 
$(t_0,x_0) \in \mathbb{R}^2$, there exist $\delta>0$ and a constant $C>0$
such that
\[
\sum_{i=1}^N 
\bigl| H(t_i,x_i) - H(t_{i-1},x_{i-1}) \bigr|
\;\le\; C
\]
for every finite sequence $(t_i,x_i)$, $i=0,\ldots,N$, satisfying
\[
(t_0,x_0) \prec (t_1,x_1) \prec \cdots \prec (t_N,x_N),
\qquad t_N < t_0 + \delta .
\]
Here $\prec$ denotes the partial order on $\mathbb{R}^2$ induced by the cone
\begin{equation}\label{eq:gammaM}
    \Gamma_M := \{ (t,x) \in \mathbb{R}^2 : |x| \le M t\}, 
\end{equation}
namely
\begin{align}\label{eq:propx}
(t,x) \prec (t',x')
\quad \Longleftrightarrow \quad
|x - x'| \le M (t - t') .
\end{align}
\end{definition}

Observe that, if $H$ has locally bounded directional variation,
then~\eqref{eq:Yode} has a unique solution by \cite{bressan1988}. Specifically, we appeal to \cite{bressan1988} in \textbf{Case 2A}, while \textbf{Case 2B} does not fulfill the assumptions of the existence and uniqueness theorem for ODEs with discontinuous coefficients in \cite{bressan1988} and, therefore, the construction of that specific solution will involve a fixed point argument as in \cite[Case 2B]{ABS2025}. 
The same two strategies employed in \textbf{Case 2} are also used in \textbf{Case 3} and \textbf{Case 4}, as appropriate.

\medskip

We distinguish four different cases, according to Section \ref{sec:solvers}.

\subsection{Case 1. $\theta^-=0, \theta^+=0$ ($g \rightarrow g$)} 
In this case, we simply solve the conservation law $u_t+g(u)_x=0$ with initial data $(u(0, x), \theta(0, x)\equiv 0)$ in \eqref{eq:initialdata} such that $\bar u(x)$ is decreasing. We distinguish two cases.
\subsubsection{Case 1A: $u^-\ge u^+$.} 
The solution exhibits only rarefaction waves, with a centered rarefaction fan at $x=0$ between $u^-$ and $u^+$.
Since the solution is decreasing, we can set $\theta(t, x)=0$ for all $(t, x)$ and there is no interface.
\subsubsection{Case 1B: $u^-<u^+$.} 
The solution exhibits rarefaction waves and an upward jump in $x=0$. However, when considering the flux $g(u)$, the solution is expected to be decreasing; therefore, we need to introduce an interface. 
We denote by $u^\flat(t, x), u^\sharp(t, x)$ the solutions to the conservation law 
\[u_t+g(u)_x=0\]
with initial data such that
\begin{equation}\label{eq:sys_ub_ud}
    u^\flat (0, x)=\begin{cases}
    \bar u(x), \quad x<0, \\
    u^-, \quad x>0
\end{cases}\qquad u^\sharp(0, x)=\begin{cases}
    u^+, \quad x<0, \\
    \bar u(x), \quad x>0,
\end{cases}
\end{equation}
where $\bar u(x)$ decreases in both $x<0$ and $x>0$. Gluing together the above solutions, we write:
\begin{align*}
    u(t, x)=\begin{cases}
        u^\flat(t, x), \quad x \le y (t), \\
        u^\sharp(t, x), \quad x > y (t),
    \end{cases}\quad 
    \theta(t, x)=\begin{cases}
        0, \quad x\neq y (t), \\
        1, \quad x=y (t), 
    \end{cases}
\end{align*}
where $y(t)$ is an evolving interface to be determined. 
According to the classical theory of conservation laws \cite{bressan2000},
any weak solution in the sense of Definition~\ref{def:weak_sol}
must satisfy the Rankine--Hugoniot condition across the interface.
Consequently, the interface position $y(t)$ is required to solve
\begin{equation*}
\dot y(t) = H(t,y(t)), \qquad y(0)=0 ,
\end{equation*}
where, in the current Case 1B,
\begin{align}\label{eq:H1b}
    H(t, y(t))=\frac{g(u^\flat(t, y(t))-g(u^\sharp(t, y(t))}{u^\flat(t, y(t))-u^\sharp(t, y(t))}.
\end{align}
This is a classical Rankine-Hugoniot shock and, therefore, the existence and uniqueness of solutions to \eqref{eq:Yode} in this case follows from the standard theory. 


\subsection{Case 2. $\theta^-=1, \theta^+=0$ ($f \rightarrow g$)}
Consider the initial data $(u(0, x), \theta (0, x))$ in \eqref{eq:initialdata}, where $\bar u(x)$ increases for $x<0$ and decreases for $x>0$.
We solve the two Cauchy problems below:
\begin{align}\label{eq:sys_ub}
u^\flat_t+f(u^\flat)_x=0, \quad
u^\flat(0,x)&=\begin{cases}
\bar u(x), \quad x<0,\\
u^-, \quad x>0, \qquad u^-=\lim_{x \rightarrow 0-} \bar u(x);
\end{cases}\\
\label{eq:sys_ud}
u^\sharp_t+g(u^\sharp)_x=0, \quad 
u^\sharp(0,x)&=\begin{cases}
u_{\max}, \quad x<0,\\
\bar u(x), \quad x>0.
\end{cases}
\end{align}
Since $\bar u$ is increasing for $x<0$ and decreasing for $x>0$, the solution $u^\sharp$ exhibits only rarefaction waves.
In contrast, $u^\flat$ develops shocks. The rarefaction fan centered at the origin can be implicitly defined as:
\begin{equation}\label{eq:sol_udiesis}
u^\sharp(t,x) =
\begin{cases}
w, &\quad x/t=g^\prime(w) \leq g^\prime(u^+),\\
\bar u(\xi) &\quad \mbox{ for } \xi > 0, \quad  x= \xi + g^\prime(\bar u(\xi)) t.
\end{cases}
\end{equation}
The solution $u=u(t,x)$ of the Cauchy problem associated with equation \eqref{eq:original} is obtained by gluing together the two solutions:
\begin{equation}\label{eq:sol_gluing}
u(t,x) = \left\{\begin{array}{ll}
u^\flat(t,x) & \mbox{ for } x < y(t),
\medskip\\
u^\sharp(t,x) & \mbox{ for } x > y(t),
\end{array}\right.
\end{equation}
for an interface $y=y(t)$ satisfying \eqref{eq:Yode}, where we impose the Rankine-Hugoniot condition:
\begin{equation}\label{eq:H}
H(t,x)
=
\frac{f\big(u^\flat(t,x-)\big)-g\big(u^\sharp(t,x+)\big)}
     {u^\flat(t,x-)-u^\sharp(t,x+)}.
\end{equation}
\begin{figure}[h!]
\centering
\begin{tikzpicture}[scale=1]
  \draw[->] (-3.4,0) -- (3.4,0) node[right] {$x$};
  \draw[->] (0,-0.5) -- (0,4) node[above] {$t$};
  
  \draw[thick, blue, dashed] (0,0) -- (-1.5,4);
  \draw[thick, blue, dashed] (0,0) -- (-1,4);
  \draw[thick, blue, dashed] (0,0) -- (-0.5,4);
  \draw[thick, blue, dashed] (0,0) -- (0.5,4);
  \draw[thick, blue, dashed] (0,0) -- (1,4);
  \draw[very thick, black] (0,0) -- (1.2,4);
  \node at (1.7,4) [above] {$t=\frac{x}{g^\prime(u^+)}$};
  \draw[very thick, blue!70!black] (0,0) -- (3,4);
  \node at (3.2,4) [above] {$t=\frac{x}{\lambda}$};
  \draw[thick, blue, dashed] (0.5,0) -- (2,4);
  \draw[thick, blue, dashed] (1,0) -- (2.9,4);
  \draw[thick, blue, dashed] (1.5,0) -- (3.8,4);
  \node at (3,1) [above] {\color{blue}$g$};
  \node at (-0.2,0) [below] {$0$};
  \draw[ultra thick, green] (0,0) -- (0.2,0.25);
  \draw[ultra thick, green] (0.2,0.25) -- (0.9,1);
  \draw[ultra thick, green] (0.9,1) -- (1.8,3);
  \draw[thick, gray] (-1,0) -- (0,1);
  \draw[thick, gray] (0,1) -- (1,0);
   \node at (-1,0) [below] {$-\delta$};
   \node at (1,0) [below] {$\delta$};
\end{tikzpicture}
\caption{Interface location in Case 2A (green line) along with the characteristic curves of solution $u^\sharp(t,x)$.}\label{fig:solCase2a}
\end{figure}

According to Section \ref{sec:case2}, we distinguish three sub-cases: \textbf{Case 2A}, \textbf{Case 2B}, \textbf{Case 2C}. In particular, we will consider initial data of the form \eqref{eq:sys_ub}, \eqref{eq:sys_ud} with different values of $\bar u(0+)$ in \eqref{eq:sys_ud} depending on the specific Case 2A and Case 2B, while in Case 2C the value $u^-$ in \eqref{eq:sys_ub} will be replaced by $u_{\mathrm{max}}$.

\subsubsection{Case 2A: $u^-<u^*<u^+$.}
The aim of this section is to prove the following.
\begin{proposition}\label{prop:main_2AI}
Let $f,g$ be as in Assumption \ref{assump}.
Let $u^\flat(t, x)$ and $u^\sharp(t, x)$ be the solutions to the Cauchy problems \eqref{eq:sys_ub}-\eqref{eq:sys_ud} with $\bar u(0+)=u^+$. Then, there exists a time $t_0>0$ and a constant $M>0$ such that, given any curve $x(t)$ satisfying \eqref{eq:propx}, denoting
\[
u^\flat(t)=u^\flat(t,x(t)), 
\qquad 
u^\sharp(t)=u^\sharp(t,x(t)) \quad \text{with} \quad x(t) \in \Gamma_M \quad \text{as} \quad \eqref{eq:gammaM},
\]
the following hold.
\begin{enumerate}
    \item The functions $u^\flat(t)$ and $u^\sharp(t)$ have bounded variation on $[0, t_0]$:
\begin{equation}\label{eq:TVbound_2AI}
    TV_{t \in [0, t_0]} (u^\flat(t))+TV_{t \in [0, t_0]} (u^\sharp(t)) \le C \cdot  TV(\bar u (x)),
\end{equation}
where $\bar u(x)$ is the initial data in \eqref{eq:sys_ub} and $\eqref{eq:sys_ud}$ and $C>0$ is a universal constant.
    \item There exists a constant $\mathfrak C=\mathfrak C (|u^+-u^-|)>0$ such that
    \[
|u^\flat(t)-u^\sharp(t)| \;\ge\; \mathfrak C
\qquad \text{for all } t\in[0,t_0].
\]
\item The function $H(t, x(t))$ in \eqref{eq:H}, associated to the Cauchy problem \eqref{eq:Yode}, has bounded variation on $[0,t_0]$.
\end{enumerate} 
\end{proposition}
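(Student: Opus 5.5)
We prove the three items one after another. The Case 2A setting gives room to work: $u^-<u^*<u^+$, so the direct jump from $(u^-,1)$ to $(u^+,0)$ is strictly Lax admissible by Lemma~\ref{lemma:up_jump}. Its speed $\lambda=(f(u^-)-g(u^+))/(u^--u^+)$ then satisfies
\[
g'(u^+)<\lambda<f'(u^-)
\]
with strict inequalities. We fix $M$ larger than $\sup_{[0,u_{\max}]}|f'|+\sup_{[0,u_{\max}]}|g'|+1$. This makes every curve with $|\dot x|\le M$ faster than all characteristics of both $u^\flat$ and $u^\sharp$, which is what makes directional BV arguments work.

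\emph{Step 1 (item 1).} For $u^\sharp$ we use the explicit representation \eqref{eq:sol_udiesis}. Since $\bar u$ is decreasing on $x>0$ and the data $u_{\max}$ on $x<0$ is constant, $u^\sharp$ consists only of rarefaction waves. It is therefore monotone nonincreasing in $x$ and Lipschitz for $t>0$, with $|u^\sharp_x|\le 1/(|g''|_{\min}\,t)$ in the fan. Along a curve $x(t)$ we cannot bound $\frac{d}{dt}u^\sharp(t,x(t))$ pointwise near $t=0$. Instead we use that $u^\sharp$ is constant along characteristics and that $x(t)$ is transversal to them: each level set $\{u^\sharp=w\}$ is a characteristic line, and a curve with bounded speed can cross it only a bounded number of times. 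The reason is that $x(t)-\xi-g'(w)t$ changes sign at most a controlled number of times for monotone $\bar u$; more simply, we parametrize by the characteristic foot $\xi(t)$ and show that $t\mapsto \xi(t)$ has bounded variation. This gives $TV(u^\sharp(\cdot))\le C\,TV(\bar u)$ plus the fan variation $|u_{\max}-u^+|$, which we absorb into the constant. For $u^\flat$, solving an $f$-conservation law with increasing data, we use the classical fact that $u^\flat$ stays increasing in $x$ and consists of shocks only, with $TV(u^\flat(t,\cdot))\le TV(\bar u)$. The total variation along a curve of bounded slope is then estimated by the standard balance: the curve crosses each shock at most finitely often (shock speeds are bounded by $M$), and between crossings the value changes only through wave interactions, which are controlled by the total strength of the waves. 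This is the argument of \cite{ABS2025} and of Bressan's directional BV estimates; we adapt it verbatim, because the values remain in $[0,u_{\max}]$.

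\emph{Step 2 (item 2).} Since $u^\flat(t,x)\to u^-$ and $u^\sharp(t,x)\to u^+$ as $(t,x)\to(0,0)$ along curves of the relevant type, we take $t_0$ small. The precise statement we need is that these traces converge along any curve with $|x(t)|\le Mt$, with an error that goes to zero with $t$. For $u^\flat$, the relevant point lies in a cone of width $Mt$ that, for small $t$, sees only data near $0^-$ or the constant $u^-$ placed at $x>0$. By monotonicity we get $|u^\flat(t)-u^-|\le \mathrm{osc}_{[-2Mt,0)}\bar u$. The analogous statement for $u^\sharp$ holds only after restricting $x(t)$ to the side $x>g'(u^+)t$, i.e.\ near the interface. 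This restriction is where Case 2A enters: the interface speed is close to $\lambda>g'(u^+)$, so we shrink the cone to directions near $\lambda$. Then $u^\sharp(t)$ is close to $u^+$, and we obtain $|u^\flat-u^\sharp|\ge \tfrac12|u^+-u^-|=:\mathfrak C$ for $t\le t_0$.

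\emph{Step 3 (item 3).} The map $\Phi(a,b)=(f(a)-g(b))/(a-b)$ is $C^1$, hence Lipschitz, on the compact set $\{(a,b)\in[0,u_{\max}]^2:|a-b|\ge\mathfrak C\}$. Therefore $TV(H(t,x(t)))\le \mathrm{Lip}(\Phi)\,\bigl(TV(u^\flat(t))+TV(u^\sharp(t))\bigr)$, which is finite by Steps 1 and 2.

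The main obstacle is Step 2, specifically controlling $u^\sharp$ along curves that might enter the rarefaction fan $x/t<g'(u^+)$. There $u^\sharp$ can approach $u_{\max}$, and the lower bound on $|u^\flat-u^\sharp|$ may fail. We would handle this by working in a narrowed cone around the ray $x=\lambda t$, e.g.\ $\{|x-\lambda t|\le \varepsilon t\}$ with $\varepsilon<\lambda-g'(u^+)$. We then check that the ODE solution stays inside it, because $H$ is close to $\lambda$ there. This matches the picture in Figure~\ref{fig:solCase2a}.
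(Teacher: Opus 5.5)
There is a genuine gap in Step 1, and it comes from your choice of $M$. The condition $|x(t)-x(s)|\le M|t-s|$ is only an \emph{upper} bound on $|\dot x|$. Taking $M>\sup|f'|+\sup|g'|+1$ does not make the curves faster than the characteristics. It enlarges the admissible class so that it contains characteristic directions and curves oscillating around them, and for such curves your finite-crossing claims are false.

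For instance, assume $u^+<u_{\max}$, choose $\mu$ and $\rho\in(0,1/2)$ with $[\mu-\rho,\mu+\rho]\subset(g'(u_{\max}),g'(u^+))$, and let $x(t)=t(\mu+\rho\sin\log t)$. Then $|\dot x|\le|\mu|+2\rho<M$ and $|x(t)|\le Mt$, and the curve stays in the centered fan of $u^\sharp$. Along it, $u^\sharp(t,x(t))=(g')^{-1}(\mu+\rho\sin\log t)$ has infinite variation on every $[0,t_0]$. If moreover $\bar u\equiv u^-$ near $0^-$, then $H(t,x(t))=\Phi(u^-,u^\sharp(t,x(t)))$ is not BV either, because $b\mapsto\Phi(u^-,b)$ is strictly monotone for $b>u^*$.

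In the same way, a Lipschitz zigzag with slopes $\pm M$ can cross a shock of $u^\flat$, or a fan of $u^\sharp$ issuing from a jump of $\bar u$, arbitrarily many times. When $\bar u$ has jumps accumulating at $0$, this happens on every $[0,t_0]$, even if you confine the \emph{position} of the curve to $\{|x-\lambda t|\le\varepsilon t\}$ as in your last paragraph. So items 1 and 3 fail for the class of curves you fix, and no counting of crossings can rescue them.

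The missing idea is to restrict the \emph{slopes} of the curves rather than their location. This is what the paper's proof does, despite the $\Gamma_M$ in the statement: it uses $\Gamma_\varepsilon=\{|x-\lambda t|\le\varepsilon t\}$ as the ordering cone, i.e.\ curves with $|\dot x-\lambda|\le\varepsilon$. This cone still contains the values of $H$, since $|H-\lambda|\le\varepsilon$ near the origin.

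The strict Lax inequalities $g'(u^+)<\lambda<f'(u^-)$ hold near the origin because $|u^\flat-u^-|,|u^\sharp-u^+|\le C\varepsilon$ there, and $g'(u^\sharp)=x/t\le g'(u^+)$ inside the fan. For $\varepsilon$ small they give $\dot x-f'(u^\flat)\le-\varepsilon$ and $\dot x-g'(u^\sharp)\ge\varepsilon$. Since $u^\flat_x\ge 0$ and $u^\sharp_x\le 0$, both $t\mapsto u^\flat(t,x(t))$ and $t\mapsto u^\sharp(t,x(t))$ are monotone. Their variation is therefore bounded by their oscillation, hence by $TV(\bar u)$ near $0$, with no crossing count needed.

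You have also misplaced the obstacle. Item 2 holds on a whole neighbourhood of the origin: $u^\flat\le u^-$ everywhere by the maximum principle, while $u^\sharp\ge u^+-o(1)$ near $(0,0)$, and inside the fan $u^\sharp\in[u^+,u_{\max}]$. So the fan only widens $u^\sharp-u^\flat$; what the fan and the wide cone destroy is item 1. Your Step 3 is fine once items 1 and 2 are available.
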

We first prove that, for small times, the interface $y(t)$ in \eqref{eq:sol_gluing}, solving \eqref{eq:Yode}:
\begin{itemize}
\item 
satisfies the \emph{transversality property}
\begin{equation}
    \label{eq:lax-interface}
    g^\prime(u^\sharp(t, y(t)) < H(t, y(t)) < f^\prime(u^\flat(t, y(t));
\end{equation}
\item lies on the right of the centered rarefaction wave at the origin (see Figure \ref{fig:solCase2a}), i.e.
\[y(t) \ge g^\prime(u^+) t.\]
\end{itemize}
\begin{remark}
In Case~2A (see below), the transversality condition is met, allowing a direct application of \cite{bressan1988}. In contrast, it is not strictly satisfied in Case~2B, where we follow \cite{ABS2025,bressan1988} and construct an explicit solution to the ODE~\eqref{eq:Yode}.
\end{remark}
\begin{lemma}
Let $y(t)$ be the solution of the Cauchy problem \eqref{eq:Yode}. There exists $t_0>0$ such that, for all $t \in [0, t_0]$,
\[g^\prime(u^\sharp(t, y(t)) < H(t, y(t)) < f^\prime(u^\flat(t, y(t)).\]
Moreover,
\begin{equation}\label{eq:Ydomain}
y(t) \ge t g^\prime(u^+).
\end{equation}
\end{lemma}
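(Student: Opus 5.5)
We argue by continuity from time $t=0$. At $t=0$ the two traces at the interface are $u^\flat(0,0-)=u^-$ and $u^\sharp(0,0+)=u^+$, since we are in Case 2A with $\bar u(0+)=u^+$ and $u^-<u^*<u^+$. By Lemma~\ref{lemma:up_jump}, and more precisely by the strict inequalities in its proof, the jump from $(u^-,f)$ to $(u^+,g)$ satisfies
\[
g'(u^+)<\lambda_0:=\frac{f(u^-)-g(u^+)}{u^--u^+}<f'(u^-).
\]
The left inequality is strict because $u^*<u^+$. The right one is strict because $f>g$ in the interior, which is where $u^+$ lies when $u^-<u^+<u_{\max}$; the boundary case is handled separately. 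Set $\varepsilon:=\tfrac13\min\{\lambda_0-g'(u^+),\,f'(u^-)-\lambda_0\}>0$. The whole proof consists in showing that the traces at $y(t)$ stay close to $(u^-,u^+)$ for small times, so that these strict inequalities persist.

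\emph{Step 1: trace control.} The function $u^\flat$ solves $u_t+f(u)_x=0$. Its data are increasing on $x<0$ and constant $u^-$ for $x>0$, and $f'$ is bounded on $[0,u_{\max}]$. For $|x|\le Mt$ and $t$ small, finite propagation speed (speed at most $\|f'\|_\infty$) shows that $u^\flat(t,x\pm)$ lies within the range of $\bar u$ on $[-(M+\|f'\|_\infty)t,0)$ together with $u^-$. Since $\bar u(0-)=u^-$, this gives $|u^\flat(t,x\pm)-u^-|\le\omega(t)$ with $\omega(t)\to0$. For $u^\sharp$ we use the explicit formula~\eqref{eq:sol_udiesis}. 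In the region $x\ge g'(u^+)t$, $u^\sharp(t,x)=\bar u(\xi)$ with $\xi\ge0$ and $\xi\le x-g'(\bar u(\xi))t\le (M+\|g'\|_\infty)t$. Right continuity of $\bar u$ at $0$ then gives $|u^\sharp(t,x+)-u^+|\le\omega(t)$.

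\emph{Step 2: continuity of $H$ and of the characteristic speeds.} The map $(a,b)\mapsto (f(a)-g(b))/(a-b)$ is continuous near $(u^-,u^+)$, since $u^-\neq u^+$. The maps $f'$ and $g'$ are uniformly continuous. Choosing $t_0$ with $\omega(t_0)$ small therefore gives $|H(t,y(t))-\lambda_0|<\varepsilon$, $|g'(u^\sharp)-g'(u^+)|<\varepsilon$ and $|f'(u^\flat)-f'(u^-)|<\varepsilon$, which yields the transversality inequalities. We take $M:=\max(\|f'\|_\infty,\|g'\|_\infty)+1\ge\sup|H|$, so that any solution of~\eqref{eq:Yode} stays in $\Gamma_M$ and Step 1 applies to it.

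\emph{Step 3: $y(t)\ge g'(u^+)t$.} This is where the argument is circular: Step 1 for $u^\sharp$ used $y(t)\ge g'(u^+)t$, while this bound is proved here. We break the circularity with a bootstrap. Let $\tau=\sup\{s\le t_0: y(t)\ge g'(u^+)t \text{ on }[0,s]\}$. On $[0,\tau]$, Steps 1--2 give $\dot y\ge \lambda_0-\varepsilon>g'(u^+)+\varepsilon$ almost everywhere. Hence $y(t)-g'(u^+)t\ge \varepsilon t$, which is strictly positive for $t>0$, and by continuity $\tau=t_0$.

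Near $t=0$ the bootstrap needs a small-time argument, because $y(0)=0$ lies on the fan boundary. We handle it by noting that inside the fan $u^\sharp\approx$ values $\ge u^+$ are still close to $u^+$: the fan spans only from $u_{\max}$ down to $u^+$, but near its right edge the values are close to $u^+$. Alternatively, we simply observe that $H$ evaluated with right traces at the edge already exceeds $g'(u^+)$.

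If the interface is understood as a Filippov-type solution given by \cite{bressan1988}, we use that $y$ is Lipschitz and apply the inequalities almost everywhere. The main obstacle is precisely this bootstrap near $t=0$, where the sign of $y(t)-g'(u^+)t$ must be fixed from the one-sided limit $\dot y(0+)=\lambda_0>g'(u^+)$.
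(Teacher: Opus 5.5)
There is a genuine gap, and it is the one you flag at the end yourself: the base case of the bootstrap in Step~3 is never established. Your set $\{s:\ y(t)\ge g'(u^+)t \text{ on } [0,s]\}$ may a priori reduce to $\{0\}$. This happens if $y(t)<g'(u^+)t$ on a whole interval $(0,t_1]$, and none of your three remedies rules it out.

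First, $u^\sharp$ is \emph{not} close to $u^+$ inside the fan. There $u^\sharp(t,x)=(g')^{-1}(x/t)$ is self-similar, so $u^\sharp$ takes every value in $[u^+,u_{\max}]$ in every neighbourhood of the origin. Hence neither the trace control of Step~1 nor the estimate $|H-\lambda_0|<\varepsilon$ of Step~2 is available there.

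Second, the sign of $H-g'(u^+)$ on the edge $x=g'(u^+)t$ only excludes re-entry into the fan after a positive last contact time $s$ with the edge. It says nothing about a trajectory lying strictly inside the fan for all small $t$, e.g.\ $y(t)\approx ct$ with $c<g'(u^+)$, along which $u^\sharp\approx(g')^{-1}(c)\neq u^+$.

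Third, fixing the sign from $\dot y(0+)=\lambda_0$ is circular. That one-sided limit is computed from traces converging to $(u^-,u^+)$, which already presupposes that $y$ stays to the right of the fan.

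The missing ingredient is an inequality valid in the \emph{whole} region $x\le g'(u^+)t$, not only near its edge. This is the first step of the paper's proof. For such $x$ one has $w:=u^\sharp(t,x)\in[u^+,u_{\max}]$ and $x/t\le g'(w)$. The function $w\mapsto g(w)-f(u^-)-g'(w)(w-u^-)$ vanishes at $u^*$ by the definition of the tangent point. It is increasing for $w>u^-$, since its derivative is $-g''(w)(w-u^-)$. Since $w\ge u^+>u^*$, this gives $\phi(u^-,w)>g'(w)\ge x/t$, where $\phi(a,w)=(f(a)-g(w))/(a-w)$.

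Next, $u^\flat\le u^-$ by the maximum principle, so concavity of $f$ together with $f\ge g$ gives $\phi(u^\flat,w)\ge\phi(u^-,w)$. Compactness of $[u^+,u_{\max}]$ then yields a constant $c>0$ with $H(t,x)-x/t\ge c$ throughout that region. Hence, as long as $y$ is in the fan,
\[
\frac{d}{dt}\,\frac{y(t)}{t}=\frac{\dot y(t)-y(t)/t}{t}\ge\frac{c}{t}.
\]
Suppose $y$ stayed in the fan on all of $(0,t_1]$. Integrating would give $y(t_1)/t_1-y(t)/t\ge c\log(t_1/t)\to\infty$ as $t\to0$, contradicting $|y(t)|\le Mt$. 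Suppose instead it entered the fan after a contact time $s>0$. Then $y/t$ would increase on $(s,t_1]$ starting from the value $g'(u^+)$, again a contradiction.

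This gives $y(t)\ge g'(u^+)t$ without any circularity. With it, your Steps~1--2, which are essentially the paper's continuity argument, go through and yield the transversality inequalities.
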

\begin{proof}
Because $g$ is concave, for $u>u^+>u^-$ we have
\[
\frac{g(u^-)-g(u)}{u^- - u} \;>\; g^\prime(u).
\]
Since $f(u^-) \ge g(u^-)$, 
\[
\frac{f(u^-)-g(u)}{u^- - u} \;>\; g^\prime(u).
\]
Consider the region $x \le g^\prime(u^+)t$.  
From the explicit representation~\eqref{eq:sol_udiesis}, in this region
\[
g^\prime\bigl(u^\sharp(t,x)\bigr)=\frac{x}{t} \;\le\; g^\prime(u^+).
\]
By the monotonicity of $g^\prime$, this implies $u^\sharp(t,x)> u^+$, and therefore
\begin{align}\label{eq:lowerCase2A}
\frac{f(u^-)-g\bigl(u^\sharp(t,x)\bigr)}{u^- - u^\sharp(t,x)}
\;>\;
g^\prime\bigl(u^\sharp(t,x)\bigr)
=
\frac{x}{t}.
\end{align}

Since $u^\flat (t, x)<u^-$ and $f(x)$ is concave, there exists $\eta \in (0, 1)$ such that
\[u^- = \eta u^\flat+ (1-\eta) u^\sharp \; \Rightarrow \; f(u^-)>\eta f(u^\flat)+ (1-\eta) f(u^\sharp).\]
Subtracting $g(u^\sharp)$,
\[f(u^-)-g(u^\sharp)>\eta (f(u^\flat)-g(u^\sharp)) + (1-\eta) (f(u^\sharp)-g(u^\sharp))>\eta (f(u^\flat)-g(u^\sharp)).\]
Observing 
\[u^- - u^\sharp = \eta (u^\flat -u^\sharp)<0,\]
allows us to conclude that
\[\frac{f(u^-)-g(u^\sharp)}{u^--u^\sharp}<\frac{f(u^\flat)-g(u^\sharp)}{u^\flat -u^\sharp}.\]

\noindent Using \eqref{eq:lowerCase2A}, this gives
\begin{equation}\label{eq:onesidelax}
\dot y(t)=H(t,y(t))
\;>\;
g^\prime\!\left(u^\sharp(t,x)\right)
=
\frac{x}{t}.
\end{equation}
In particular, 
\[
\dot y(t) > g^\prime(u^+),
\]
which implies that the interface cannot enter the region $x<g^\prime(u^+)t$.
\medskip

Now, consider the region $x>y(t)\ge g^\prime(u^+)t$. By definition of $H(t, y(t))$, in this region, for $t>0$, 
\[\dot y(t)=H(t, y(t))>g'(u^+).\]
Taking $u^\sharp (t, x)$ sufficiently close to $u^+$ for $(t, x)$ near $(0, 0)$, we deduce
\[\dot y(t)=H(t, y(t))>g'(u^\sharp (t, x)).\]
Since $x> y (t) \ge g'(u^+) t$, by monotonicity we have 
$u^\sharp(t, x) \le u^\sharp (t, y(t))$ and 
\[g^\prime(u^\sharp (t, x))>g^\prime(u^\sharp (t, y(t))).\]
This gives 
\[H(t, y(t))>g'(u^\sharp (t, y(t))). \]
A similar argument leads to
\[H(t, y(t)) < f^\prime(u^\flat (t, y(t))).\]

\noindent It remains to justify that $u^\flat(t,x)$ and $u^\sharp(t,x)$ stay arbitrarily close
to $u^-$ and $u^+$, respectively, for $t$ small and $x$ near~$0$.
This follows from the construction of $u^\flat$ and $u^\sharp$:
the function $u^\flat$ solves~\eqref{eq:sys_ub} with piecewise monotone initial data
$\bar u$ satisfying
$\displaystyle \lim_{x\to 0-}\bar u(x)=u^-$;
the function $u^\sharp$ is a rarefaction wave (hence continuous)
generated by initial data with $\displaystyle \lim_{x\to 0+}\bar u(x)=u^+$.
Therefore both traces $u^\flat(t,y(t)-)$ and $u^\sharp(t,y(t)+)$ remain close
to $u^-$ and $u^+$ for small~$t$, completing the proof.
\end{proof}

As a direct consequence, we can immediately prove the following one-side Lax admissibility inequality \emph{at $t=0$}.


\begin{corollary}\label{cor:2A}
    Let $y(t)$ be the solution to the Cauchy problem \eqref{eq:Yode}. Then
    \begin{align}
        \dot y(0+) = \lim_{\substack{t \to 0+ \\ y \ge g^\prime(u^+)t}} H(t, y(t))= \frac{f(u^-)-g(u^+)}{u^--u^+}=:\lambda> g^\prime(u^+).
    \end{align}
\end{corollary}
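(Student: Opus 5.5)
The plan is to pass to the limit $t\to0+$ in the Rankine--Hugoniot expression \eqref{eq:H} along the interface, and then compare the limiting slope with $g'(u^+)$ through concavity of $g$.

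\textbf{Step 1: the traces converge.} By the preceding lemma, the interface satisfies $y(t)\ge g'(u^+)t$, and $|y(t)|\le Mt$ because $H$ is bounded. Hence $(t,y(t))\to(0,0)$ from inside the region to the right of the centered rarefaction fan of $u^\sharp$.

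\begin{itemize}
\item On the left, $u^\flat$ solves \eqref{eq:sys_ub} with data that are increasing for $x<0$, with $\bar u(0-)=u^-$, and constant equal to $u^-$ for $x>0$. Points $(t,x)$ with $|x|\le Mt$ and $t\to0$ are therefore influenced only by data in a shrinking neighbourhood of $0$, by finite propagation speed. Together with $L^1$-continuity and monotonicity of the data, this gives $u^\flat(t,y(t)-)\to u^-$.
\item On the right, in the region $x\ge g'(u^+)t$, formula \eqref{eq:sol_udiesis} gives $u^\sharp(t,x)=\bar u(\xi)$ with $\xi=x-g'(\bar u(\xi))t$. Since $|x|\le Mt$ and $g'$ is bounded, $\xi\to0+$, so $u^\sharp(t,y(t)+)\to\bar u(0+)=u^+$.
\end{itemize}

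\textbf{Step 2: continuity of $H$.} The denominator of $H$ stays away from zero. This follows from Proposition~\ref{prop:main_2AI}(2), or directly from $u^-<u^+$. Since $f$ and $g$ are continuous, the quotient \eqref{eq:H} converges to $\lambda=\frac{f(u^-)-g(u^+)}{u^--u^+}$. Because $y$ is absolutely continuous with $\dot y=H(t,y(t))$ a.e., this limit of $H$ along the curve identifies the right derivative $\dot y(0+)$.

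\textbf{Step 3: the strict inequality.} Write
\[
\lambda=\frac{g(u^+)-g(u^*)}{u^+-u^*}\,\frac{u^+-u^*}{u^+-u^-}+\frac{g(u^*)-f(u^-)}{u^*-u^-}\,\frac{u^*-u^-}{u^+-u^-},
\]
where $u^*$ is the tangent point \eqref{eq:ustar}. The second quotient equals $g'(u^*)$ by definition of $u^*$. By strict concavity, the first quotient is greater than $g'(u^+)$, and $g'(u^*)>g'(u^+)$ since $u^*<u^+$. Since $u^-<u^*<u^+$, both weights are positive, so $\lambda$ is a strict convex combination of quantities exceeding $g'(u^+)$, and therefore $\lambda>g'(u^+)$. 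This is exactly the computation in Lemma~\ref{lemma:up_jump}.

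\textbf{Main obstacle.} I expect the hardest part to be the left trace $u^\flat(t,y(t)-)\to u^-$ in Step 1. The function $u^\flat$ may contain shocks emanating near the origin, since $\bar u$ is increasing on the left and $f$ is concave. The argument I would try first is to sandwich $u^\flat$ between solutions with constant data $u^--\varepsilon$ and $u^-$ on a small interval $(-\delta,\infty)$, using monotonicity and $L^1$-contraction of the $f$-conservation law. Since the cone $|x|\le Mt$ lies in the domain of dependence of $(-\delta,\delta)$ for small $t$, the values of $u^\flat$ there lie in $[u^--\varepsilon,u^-]$.
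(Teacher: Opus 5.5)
Your proof is correct, but it reaches the limit by a different mechanism than the paper.

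\textbf{The paper's route.} It invokes the transversality inequality \eqref{eq:lax-interface} together with a formal chain-rule computation, $\frac{d}{dt}u^\flat(t,y(t))=u^\flat_x\,(\dot y-f'(u^\flat))\le 0$, and similarly for $u^\sharp$. This shows both traces are monotone in $t$ along the interface. Hence they have bounded variation and the limit of $H(t,y(t))$ exists. Its value $\lambda$ is then identified only ``by construction.''

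\textbf{Your route.} You prove directly that $u^\flat(t,y(t)-)\to u^-$, via domain of dependence and order preservation. You prove $u^\sharp(t,y(t)+)\to u^+$ via the characteristic representation to the right of the fan. You then conclude by continuity of $(u,v)\mapsto (f(u)-g(v))/(u-v)$ away from the diagonal.

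\textbf{Comparison.} Your argument needs only $y(t)\ge g'(u^+)t$ and a linear bound $|y(t)|\le Mt$. It does not use the two-sided transversality or the regularization step. It also makes explicit the identification of the limit that the paper leaves implicit.

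You also supply the strict inequality $\lambda>g'(u^+)$ through the convex-combination computation of Lemma~\ref{lemma:up_jump}. You rightly rely on $u^*<u^+$ strictly: passing to the limit in $\dot y>g'(u^+)$ alone would only give $\ge$.

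\textbf{Small tidy-up.} To avoid circularity in ``$|y(t)|\le Mt$ because $H$ is bounded,'' bound the denominator on a full neighbourhood of the origin rather than on the cone. This works because $u^\flat\le u^-$ everywhere by the maximum principle, and $u^\sharp\ge u^+-\varepsilon$ for $t+|x|$ small, both inside and to the right of the fan.

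\textbf{What the paper's argument buys.} The monotonicity gives, in addition, bounded variation of the traces in $t$ along the interface. That is the ingredient reused in Proposition~\ref{prop:main_2AI}. For the corollary itself, your direct argument suffices and is more transparent.
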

\begin{proof}
First, notice that, as an immediate consequence of \eqref{eq:lax-interface},
\[t \rightarrow u^\sharp (t, y(t)), \quad t \rightarrow u^\flat (t, y(t))\]
are monotone decreasing functions of time. Indeed, formally, assuming to rely on a regularization, we have
\[\frac{d}{dt} u^\flat(t, y(t))=u_t^\flat+\dot y(t) u_y^\flat =u_y^\flat (\dot y(t) - f^\prime(u^\flat))\le 0\] because $u_y^\flat \ge 0$. Similarly for $u^\sharp$. In particular, they are functions of bounded variation and the following limit is well defined:
\[
\lambda := \lim_{\substack{t \to 0+ \\ y \ge g^\prime(u^+)t}} H(t, y(t)).
\]
By construction of the solutions $u^\flat, u^\sharp$ to the Cauchy problems \eqref{eq:sys_ub}-\eqref{eq:sys_ud}, we deduce 
\[\dot y(0+)=\lambda=\frac{f(u^-)-g(u^+)}{u^--u^+}> g^\prime(u^+).\]
\end{proof}

We aim to show that $H(t,x(t))$ has bounded variation for small times. 
A key point in the proof is to ensure that the denominator in 
\eqref{eq:H} remains uniformly bounded away from zero. We prove this in the following. 

\subsubsection*{Proof of Proposition \ref{prop:main_2AI}}
For any $\varepsilon>0$, we consider the cone \begin{equation}\label{eq:cone}
    \Gamma_\varepsilon := \{\, (t,x) : t \ge 0,\ |x - \lambda t| \le \varepsilon\, t \,\}.
\end{equation}

    Consider the interface $y(t)$ that solves the Cauchy problem \eqref{eq:Yode}.
    Clearly, $y(0)$ belongs to the cone $\Gamma_\varepsilon$. For any $\varepsilon>0$,
    we can find $\delta>0$, a time $0<t_\delta<\min\{ t_1^\flat, t_1^\sharp\}$, where $t_1^\flat$ (resp. $t_1^\sharp$) is the first interaction time of $u^\flat$ (resp. $u^\sharp$) with the interface, and a constant $C>0$ such that \[|u^\flat(t, x) - u^-|<C\varepsilon, \quad  |u^\sharp(t, x) - u^+|<C\varepsilon, \quad |x| \le \delta, \quad  t\le t_\delta.\]
    This implies that
    \[|H(t, x)-\lambda| \le \varepsilon. \]
    In particular, the gluing solution evaluated at the interface $u(t, x(t))$ in \eqref{eq:sol_gluing} remains inside the cone $\Gamma_\varepsilon$ until a time $t_0 \le t_\delta$. Because $\lambda< f^\prime(u^-)$, we can choose $\varepsilon>0$ sufficiently small that 
    \begin{align}
        f^\prime(u^-)>\lambda + \varepsilon (2+C\|f''\|_{L^\infty}).
    \end{align}
    If the above inequality holds, then
    \[f^\prime(u^\flat) \ge f^\prime(u^-)-\|f''\|_{L^\infty}|u^\flat-u^-| \ge f^\prime(u^-)-\varepsilon C\|f''\|_{L^\infty} > \lambda + 2 \varepsilon.\]
    Assuming, additionally, that
    \[g^\prime(u^+)<\lambda-\varepsilon (2+C\|g''\|_{L^\infty}),\]
    yields
    \[g^\prime(u^+) < \lambda - 2\varepsilon.\]
    In particular, this implies that
    \[\dot x - f^\prime(u^\flat) \le \lambda + \varepsilon - \lambda - 2\varepsilon = - \varepsilon < 0,\]
    namely the function $t \rightarrow u^\flat (t)$ is monotonically decreasing because $u_x^\flat$ is positive. Similarly, we have that $\dot x - g^\prime(u^\sharp) > \varepsilon $ and $u_x^\sharp$ is decreasing, yielding that $t \rightarrow u^\sharp (t)$ is monotonically decreasing. Therefore, the total variation of $u^\flat (t)$ and $u^\sharp (t)$ is bounded by the total variation of the initial data $\bar u(x)$ in a neighborhood of the origin and \eqref{eq:TVbound_2AI} holds. 
    Moreover,
    \begin{align}\label{eq:lower_bound} u^\sharp - u^\flat \ge u^+-u^--|u^\flat-u^-|-|u^\sharp - u^+| >  u^+-u^--2C\varepsilon > \frac{u^+-u^-}{2}
    \end{align}
    for $\varepsilon>0$ sufficiently small. 
    We are left with proving the last item. Because
    \[f(u^\flat)-g(u^\sharp) \le (\|f^\prime\|_{L^\infty}+\|g^\prime\|_{L^\infty})(TV(u^\flat)+TV(u^\sharp)),\]
    where the latter is bounded by \eqref{eq:TVbound_2AI} and appealing to the lower bound \eqref{eq:lower_bound}, we deduce that $H(t, x)$ has bounded variation - with an estimate depending on the BV norm of the initial data $\bar u(x)$ - for $(t, x) \in \Gamma_\varepsilon$. \\

    \medskip
    In conclusion, we proved that $H(t, y(t))$ has \emph{directionally bounded variation} according to Definition \ref{def:localBV}.
    The proof of Case 2A is now a direct consequence of \cite{bressan1988}.
\subsubsection{Case 2B: $u^- < u^-_{\mathrm{lim}}$, $u^- < u^*$, and $u^+ < u^*$.}
Recall from \eqref{eq:sol2b} that the solution of the Riemann problem consists of a jump between $u^-$ and $u^*$ with speed 
\begin{equation}\label{eq:lambda2B}
    \lambda = g^\prime(u^*) = \frac{g(u^*)-f(u^-)}{u^*-u^-}, \quad \text{with} \quad u^-=\lim_{(t,x)\rightarrow(0,0)} u^\flat(t,x),
\end{equation}
followed by rarefaction a with flux $g(u)$ between states $u^*$ and $u^+$, with $g^\prime(u^*)<g^\prime(u^+)$. Clearly, the transversality property \eqref{eq:lax-interface} does not hold in this case where $\lambda=g^\prime(u^*)$, the theory of \cite{bressan1988} does not apply, and we are brought, following \cite{ABS2025}, to construct our solutions by a fixed point argument.

\begin{figure}[h!]
\centering
\begin{tikzpicture}[scale=1]
  \draw[->] (-3.4,0) -- (3.4,0) node[right] {$x$};
  \draw[->] (0,-0.5) -- (0,4) node[above] {$t$};
  
  \draw[thick, blue, dashed] (0,0) -- (-1.5,4);
  \draw[thick, blue, dashed] (0,0) -- (-1,4);
  \draw[thick, blue, dashed] (0,0) -- (-0.5,4);
  \draw[thick, blue, dashed] (0,0) -- (0.5,4);
  \draw[thick, blue, dashed] (0,0) -- (1,4);
  \draw[very thick, black] (0,0) -- (1.7,4);
  \node at (1.9,4) [above] {$\frac{x}{g^\prime(u^+)}$};
  \draw[very thick, blue!70!black] (0,0) -- (0.5,4);
  \node at (0.6,4) [above] {$\frac{x}{g^\prime(u^*)}$};
  \draw[thick, blue, dashed] (0.5,0) -- (2,4);
  \draw[thick, blue, dashed] (1,0) -- (2.9,4);
  \draw[thick, blue, dashed] (1.5,0) -- (3.8,4);
  \node at (3,1) [above] {\color{blue}$g$};
  \node at (-0.2,0) [below] {$0$};
  \draw[ultra thick, green] (0,0) -- (0.1,0.25);
  \draw[ultra thick, green] (0.1,0.25) -- (0.3,1.1);
  \draw[ultra thick, green] (0.3,1.1) -- (0.5,2);
  \draw[ultra thick, green] (0.5,2) -- (0.7,3.5);
\end{tikzpicture}
\caption{Interface location in Case 2B (green line) along with the characteristic curves of solution $u^\sharp(t,x)$.}\label{fig:solCase2b}
\end{figure}

To this end, we introduce the Picard operator as in \cite{ABS2025}:
\begin{equation}\label{eq:PicardOp2B}
    (\mathcal{P}y)(t) = \displaystyle\int_0^{T} \phi\left(u^\flat(s,y(s)-),u^\sharp(s,y(s))\right) ds \quad\mbox{ where }\quad \phi(u,v)=\frac{f(u)-g(v)}{u-v}.
\end{equation}

We aim at proving the following.
\begin{proposition}\label{prop:2B}
There exists \( t_0 > 0 \) sufficiently small such that the Picard operator \eqref{eq:PicardOp2B} is a contraction mapping on the complete metric space $(\mathcal{F}, d)$ where $\mathcal{F}$
is the family of Lipschitz functions
\begin{equation}
    \mathcal F = \left\{ y\in W^{1,\infty}([0,t_0]); \,|\dot y(t) -g^\prime(u^*) | \leq \varepsilon, \mbox{ for a.e. t}, \,\,\lim_{t\rightarrow 0}\frac{y(t)}{t}=g^\prime(u^*) \; \text{with} \; u^* \; \text{in} \; \eqref{eq:lambda2B} \right\},
\end{equation}
with distance 
\begin{equation}
    d(y,z) = \sup_{0<t\leq t_0} \left| \frac{y(t)-z(t)}{t}\right|.
\end{equation}
Moreover, the unique fixed point $\mathcal{P}(y(t))=y(t) \in \mathcal F$ is the unique solution of the initial value problem \eqref{eq:Yode} in \( [0,t_0] \), where $u^\flat(t, x)$, $ u^\sharp (t, x)$ solve the Cauchy problems \eqref{eq:sys_ub}-\eqref{eq:sys_ud} with $\bar u(0+)=u^*$.
\end{proposition}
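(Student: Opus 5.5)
We prove Proposition~\ref{prop:2B} by adapting the Picard argument of \cite[Case 2B]{ABS2025}. We read the operator as $(\mathcal P y)(t)=\int_0^t \phi(u^\flat(s,y(s)-),u^\sharp(s,y(s)))\,ds$, with the upper limit $t$ rather than $T$.

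\textbf{Step 1: the cone and local behaviour of the two solutions.} Fix $\varepsilon>0$ and let $y\in\mathcal F$. Then $y$ stays in the cone $\{|x-g'(u^*)t|\le\varepsilon t\}$. Since $g'(u^*)<g'(u^+)$, this cone lies inside the centered $g$-rarefaction of $u^\sharp$ for $\varepsilon$ small, so there
\[
u^\sharp(t,x)=(g')^{-1}(x/t),\qquad |u^\sharp(t,x)-u^*|\le C\varepsilon .
\]
The Cauchy problem \eqref{eq:sys_ud} is posed with $\bar u(0+)=u^*$, and before interactions $u^\sharp$ is the rarefaction from $u_{\max}$ down to $u^*$, continued by the decreasing profile. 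On the other side, $u^\flat$ is close to $u^-$ near the origin for $t\le t_0$, because $\bar u(0-)=u^-$ and $u^\flat$ has increasing data. Moreover $u^\flat(s,y(s)-)$ is monotone in $s$, by the same transversality computation as in Corollary~\ref{cor:2A}, using $\dot y\approx g'(u^*)<f'(u^-)$. Hence $u^\flat(\cdot,y(\cdot))$ has total variation controlled by $TV(\bar u)$ near $0$, and it tends to $u^-$ as $t\to0$.

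\textbf{Step 2: $\mathcal P$ maps $\mathcal F$ into itself.} Along $y\in\mathcal F$ we have $\phi(u^\flat,u^\sharp)\to\phi(u^-,u^*)=g'(u^*)$, by the tangency \eqref{eq:lambda2B}. The denominator satisfies $|u^\flat-u^\sharp|\ge (u^*-u^-)/2$. Therefore $|\frac{d}{dt}\mathcal P y-g'(u^*)|\le\varepsilon$ for $t\le t_0$, and $(\mathcal Py)(t)/t\to g'(u^*)$. Completeness of $(\mathcal F,d)$ is standard: uniform limits in the weighted sup norm preserve the Lipschitz bound and the limit slope.

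\textbf{Step 3: contraction.} This is the main step, since transversality fails only on the $u^\sharp$ side. For $y,z\in\mathcal F$ we split
\[
\phi(u^\flat_y,u^\sharp_y)-\phi(u^\flat_z,u^\sharp_z)
\]
into a $u^\sharp$-part and a $u^\flat$-part.

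For the $u^\sharp$-part, the key observation is that $\partial_v\phi(u^-,v)|_{v=u^*}=0$. Indeed,
\[
\partial_v\phi(u,v)=\frac{\phi(u,v)-g'(v)}{u-v},
\]
and this vanishes at $(u^-,u^*)$ by the tangency. Hence $|\partial_v\phi|\le C(|u^\flat-u^-|+|u^\sharp-u^*|)\le C'\varepsilon+o(1)$. Combined with
\[
|u^\sharp(s,y(s))-u^\sharp(s,z(s))|\le \frac{|y(s)-z(s)|}{s\,|g''|_{\min}}\le C\,d(y,z),
\]
this part contributes at most $C\varepsilon\, d(y,z)$ pointwise. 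After integration and division by $t$, it gives $C\varepsilon\,d(y,z)$.

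For the $u^\flat$-part we use transversality: $\dot y<f'(u^\flat)-c$. Consider the region between the two curves $y$ and $z$. Characteristics of $u^\flat$ cross it, so $|u^\flat(s,y(s))-u^\flat(s,z(s))|$ is bounded by the variation of $u^\flat$ on a set of size $O(|y-z|)$, and $|y(s)-z(s)|\le s\,d(y,z)$. The time integral of this variation is bounded by $C\,t\,d(y,z)\cdot TV(u^\flat;\text{near }0)$, and this total variation tends to $0$ as $t_0\to0$, since it is the variation of $\bar u$ on $(-\delta,0)$. Alternatively, one can estimate the difference of conserved quantities via the divergence theorem on the region between the curves, as in \cite{ABS2025}.

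Summing the two parts gives $d(\mathcal Py,\mathcal Pz)\le \kappa\, d(y,z)$ with $\kappa=C\varepsilon+o(1)<1$ for $\varepsilon$ and $t_0$ small. Banach's theorem then gives a unique fixed point $y$. This $y$ solves \eqref{eq:Yode} a.e. Uniqueness among solutions of \eqref{eq:Yode} follows because any solution lies in $\mathcal F$, by Step 1 and the limit $\dot y(0+)=g'(u^*)$.
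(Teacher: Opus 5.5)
Your contraction argument is essentially the paper's. You use the same split into a $u^\sharp$-part (the paper's $B(s)$) and a $u^\flat$-part (the paper's $A(s)$). The $u^\sharp$-part is handled by $\phi_v(u^-,u^*)=0$ together with the rarefaction bound $|u^\sharp(s,y(s))-u^\sharp(s,z(s))|\le |y(s)-z(s)|/(s\min|g''|)$. The $u^\flat$-part is handled by transversality plus the small oscillation of $u^\flat$ near the origin, which is what Lemma~\ref{lemma2B} formalizes. Your transversality gap $g'(u^*)<f'(u^-)\le f'(u^\flat)$ is cleaner than the paper's.

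Two precisions. First, in Step~2, convergence of $\phi(u^\flat,u^\sharp)$ along each fixed $y$ does not give a $t_0$ uniform over $\mathcal F$. Use instead the second-order bound $|\phi(u^\flat,u^\sharp)-g'(u^*)|\le L|u^\flat-u^-|+C|u^\sharp-u^*|^2\le o(1)+C\varepsilon^2\le\varepsilon$; a first-order $O(\varepsilon)$ bound would not close. Second, with $\bar u(0+)=u^*$ the centered fan ends exactly at $x=g'(u^*)t$, so $u^\sharp=(g')^{-1}(x/t)$ holds only on the left half of the cone. Your estimates still hold, because the continuation from decreasing data is again a $g$-rarefaction with $|u^\sharp_x|\le 1/(t\min|g''|)$.

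\textbf{The gap is your last sentence.} The proposition asserts that the fixed point is \emph{the} solution of \eqref{eq:Yode}. This requires showing that every solution of $\dot y=H(t,y)$, $y(0)=0$, lies in $\mathcal F$. Your Step~1 starts from ``let $y\in\mathcal F$'', so it says nothing about an arbitrary solution, and $\dot y(0+)=g'(u^*)$ is asserted rather than proved. The route of Corollary~\ref{cor:2A} is not available here. Along a curve inside the fan, $\frac{d}{dt}u^\sharp(t,y(t))=u^\sharp_x\,(\dot y-g'(u^\sharp))$ has no sign, since transversality fails exactly on the $u^\sharp$ side. So the trace $u^\sharp(t,y(t))=(g')^{-1}(y(t)/t)$ need not converge a priori. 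This a priori localization is what the first part of the paper's proof (the comparison with $w^\pm_\varepsilon$) is for, and you need some version of it.

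A trapping argument works:
\begin{enumerate}
\item[(i)] Since $(u^-,f(u^-))$ lies above the graph of $g$, the secant slope $v\mapsto\phi(u^-,v)$ is maximal at the tangency point $u^*$. Hence $H\le g'(u^*)+o(1)$ near the origin, and so $y(t)<(g'(u^*)+\varepsilon)t$.
\item[(ii)] To the left of the cone, $u^\sharp=w>u^*$ (or $w=u_{\max}$), so $H>g'(w)\ge x/t$. Thus $y(t)/t$ is increasing there, and a solution cannot exit the cone through its left edge.
\item[(iii)] Suppose $y$ stayed left of the cone on some $(0,t_1)$. Then the monotone limit $\mu=\lim_{t\to0}y(t)/t<g'(u^*)$ exists. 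Since $y(t)/t=\frac1t\int_0^t H$, it satisfies $\mu=\phi(u^-,(g')^{-1}(\mu))$, which makes $(g')^{-1}(\mu)\neq u^*$ a second tangency point. This is a contradiction.
\end{enumerate}
Only after this does the quadratic bound give $|\dot y-g'(u^*)|\le\varepsilon$, i.e.\ $y\in\mathcal F$. Uniqueness then follows from the contraction.
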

The proof relies on the intermediate result below, whose proof is in \cite[Lemma 5.1]{ABS2025} and therefore we omit it.
\begin{lemma}\label{lemma2B}
Given a constant $a>0$, let $v:[0,\tau]\times\R \rightarrow \R$ be a function with properties:
\begin{itemize}
    \item[(i)] The function $x\rightarrow v(t,x)$ increases at all times.
    \item[(ii)] For every Lipschitz curve $t\rightarrow\gamma(t)$ with derivative $\dot\gamma(t)\ge a - \varepsilon$, the composite function $t\rightarrow v(t,\gamma(t))$ is decreasing. 
\end{itemize}
Consider any couple of Lipschitz functions $y,z:[0,\tau]\rightarrow \R$ such that 
\[
y(0)=z(0)=0, \quad \dot y(t) \ge a,  \ \dot z(t)\ge a \quad \mbox{for a.e. } t\in[0,\tau],
\]
and define 
\[
v_{max}=\sup_{t,x} v(t,x), \quad v_{min}=\inf_{t,x} v(t,x), \quad \delta(t) = \sup_{0<s<t}|y(s)-z(s)|.
\]
Then, for every $t\in[0,\tau]$ one has
\begin{equation}
    \int_0^t |v(s,y(s))-v(s,z(s)))| ds \leq 2 \delta(t)\,\frac{v_{max}-v_{min}}{\epsilon}.
\end{equation}
\end{lemma}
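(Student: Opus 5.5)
The plan is a time-shift comparison followed by a telescoping integral. Fix $t\in(0,\tau]$; if $\delta(t)=0$ there is nothing to prove. For $s\in[0,t]$ set
\[
m(s)=\min\{y(s),z(s)\},\qquad w(s)=\max\{y(s),z(s)\},
\]
which are Lipschitz with $m(0)=w(0)=0$, $\dot m,\dot w\ge a$ a.e., and $0\le w(s)-m(s)\le\delta(t)$. By (i), $|v(s,y(s))-v(s,z(s))| = v(s,w(s))-v(s,m(s))$. So it suffices to bound $\int_0^t [v(s,w(s))-v(s,m(s))]\,ds$, and the difference of the two curves has been traded for a gap in space of size at most $\delta(t)$.

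\emph{Step 1 (comparison).} The idea is to convert this spatial gap into a time shift $\sigma=c\,\delta(t)/\varepsilon$, with $c\in\{1,2\}$ to be fixed. I want a pointwise inequality of the form
\[
v(s,w(s))\le v(s-\sigma,m(s-\sigma)),\qquad s\ge\sigma,
\]
or its mirror image $v(s,m(s))\ge v(s+\sigma,w(s+\sigma))$. To get it, I join the two points by a straight segment $\gamma$ whose slope is at least $a-\varepsilon$ and apply (ii). Along $\gamma$, $v$ decreases, so the value at the later endpoint is dominated by the value at the earlier one. Where the segment does not land exactly on the curve, I adjust using the monotonicity (i) in $x$. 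The constraint $w-m\le\delta$ is what makes the slope loss at most $\varepsilon$ when the shift is $\sigma\sim\delta/\varepsilon$: over a time $\sigma$ a slope $a$ curve and a slope $a-\varepsilon$ line separate by $\varepsilon\sigma\sim\delta$, exactly absorbing the gap.

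\emph{Step 2 (telescoping).} Once Step 1 holds, write $\phi(r)=v(r,m(r))$, which is decreasing by (ii) since $\dot m\ge a$. For $s<\sigma$ I use the crude bound $v(s,w(s))-v(s,m(s))\le v_{max}-v_{min}$. For $s\ge\sigma$ I bound the integrand by $\phi(s-\sigma)-\phi(s)$. This gives
\[
\int_\sigma^t[\phi(s-\sigma)-\phi(s)]\,ds=\int_0^{\sigma}\phi-\int_{t-\sigma}^{t}\phi\le \sigma\,(v_{max}-v_{min}).
\]
The two pieces together give at most $2\sigma(v_{max}-v_{min})$, which is $2\delta(t)(v_{max}-v_{min})/\varepsilon$ for $c=1$. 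If $t<\sigma$, the crude bound alone already gives $t(v_{max}-v_{min})\le \sigma(v_{max}-v_{min})$.

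\emph{Main obstacle.} The hard step is the geometry of Step 1. The curves only satisfy $\dot y,\dot z\ge a$, with no upper speed bound, so the backward point $m(s-\sigma)$ is controlled only from above. The segment must therefore be chosen so that the inequalities we use go in the right direction for (i) and (ii) simultaneously. I would first try to anchor the segment at the earlier point of the \emph{faster} curve and run it with slope exactly $a-\varepsilon$. I would then check, using $w-m\le\delta$ and $\dot m\ge a$, that it stays below the slower curve at the later time, which is where monotonicity in $x$ enters with the correct sign. If an upper speed bound is needed, as in the application, where $\dot y,\dot z\le g'(u^*)+\varepsilon$ on $\mathcal F$, I would impose it and allow the constant $c$ to absorb the extra slack. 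This accounts for the factor $2$ in the statement.
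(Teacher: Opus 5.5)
The paper gives no proof of this lemma (it defers to \cite[Lemma 5.1]{ABS2025}), so I judge your argument on its own. Your Step 2 is correct. With $\sigma=\delta(t)/\varepsilon$ it already produces the factor $2$: the crude bound on $[0,\sigma]$ plus the telescoped boundary term. Your closing paragraph therefore explains nothing: taking $c=2$ would give $4$, and adding an upper speed bound on $y,z$ changes the hypotheses.

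The gap is Step 1, which you leave as a plan, and the plan you sketch cannot work. A segment of slope exactly $a-\varepsilon$ is slower than both curves, since $\dot m,\dot w\ge a$. Anchored on the upper curve at $(s-\sigma,w(s-\sigma))$, it does end below the lower curve:
\[
\gamma(s)=w(s-\sigma)+(a-\varepsilon)\sigma\le m(s)+\delta-\varepsilon\sigma\le m(s) \quad\text{for } \sigma=\delta/\varepsilon .
\]
But then (ii) gives $v(s,\gamma(s))\le v(s-\sigma,w(s-\sigma))$ and (i) gives $v(s,\gamma(s))\le v(s,m(s))$. These are two upper bounds on the same number, and they do not chain, so (i) does not enter ``with the correct sign''. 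In general, the chain $v(s,w(s))\le v(s,\gamma(s))\le v(s-\sigma,\gamma(s-\sigma))\le v(s-\sigma,m(s-\sigma))$ requires $\gamma(s)\ge w(s)$ and $\gamma(s-\sigma)\le m(s-\sigma)$. That forces an average slope of at least $(m(s)-m(s-\sigma))/\sigma\ge a$, so a slope-$(a-\varepsilon)$ segment is excluded. The same computation rules it out for your mirror inequality.

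Under the hypotheses exactly as written, the missing step is actually immediate. The chord from $(s-\sigma,m(s-\sigma))$ to $(s,w(s))$, extended linearly, has slope $\ge a$. So (ii) alone gives $v(s,w(s))\le v(s-\sigma,m(s-\sigma))$ for every $\sigma\in(0,s]$, and Step 2 with $\sigma=\delta(t)/\varepsilon$ finishes.

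Since $\sigma$ is arbitrary here, letting $\sigma\to 0$ in Step 2 shows the left-hand side is $0$. Indeed, (i)--(ii) with slopes unbounded above force $\sup_x v(t+h,x)\le\inf_x v(t,x)$ for $h>0$. This signals that the inequalities in the transcribed statement are reversed relative to the intended lemma. They are also reversed relative to what the application actually checks: $u^\flat$ is increasing in $x$ and decreases along curves \emph{slower} than $f'(u^\flat)$.

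In the meaningful version, (ii) holds for curves with $\dot\gamma\le b$, and $\dot y,\dot z\le b-\varepsilon$. There your $\varepsilon\sigma$-separation heuristic is right, but with the roles of the slopes swapped: run the line of the faster slope $b$ from $(s-\sigma,m(s-\sigma))$. Since
\[
w(s)\le w(s-\sigma)+(b-\varepsilon)\sigma\le m(s-\sigma)+\delta+(b-\varepsilon)\sigma ,
\]
the line lies above $w(s)$ at time $s$ once $\sigma=\delta/\varepsilon$. Then (i) and (ii) give $v(s,w(s))\le v(s-\sigma,m(s-\sigma))$, and your telescoping yields exactly $2\delta(t)(v_{max}-v_{min})/\varepsilon$. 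This is where the choice $\sigma=\delta/\varepsilon$ is genuinely needed.
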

\begin{proof}[Proof of Proposition \ref{prop:2B}]
We first show that there exists $t_0>0$ such that the interface $y(t)$, solving \eqref{eq:Yode}, belongs to $\mathcal F$. 
For the centered rarefaction, where $x/t \leq g^\prime(u^+)$, we have $u^\sharp(t,x) = w$ with $x/t = g^\prime(w)$. In particular, $g^\prime(u^*) = \lambda$, and there exist $w^-_\varepsilon > u^* > w^+_\varepsilon$ such that
\[
g^\prime(w^-_\varepsilon) = \lambda - \varepsilon, \qquad g^\prime(w^+_\varepsilon) = \lambda + \varepsilon.
\]
\begin{itemize}
    \item If $w \geq w^-_\varepsilon> u^*$, by the concavity of $g$ and the inequality $f>g$,
    \[
    H = \frac{f(u^\flat)-g(w)}{u^\flat -w} > g^\prime(w),
    \]
    for $u^\flat(t, x)$ sufficiently close to $u^-$.
    Then, $\dot y > \lambda-\varepsilon$ choosing $\varepsilon$ sufficiently small.

    \item If $w \leq w^+_\varepsilon < u^*$, then $g^\prime(w) \geq g^\prime(w^+_\varepsilon) = \lambda+\varepsilon$ and, for $\varepsilon$ small enough,
    \[
    H = \frac{f(u^\flat)-g(w)}{u^\flat -w} < \lambda + \varepsilon.
    \]
\end{itemize}
This shows that $y(t)$ satisfies $|\dot y(t)-g^\prime(u^*)| \le \varepsilon$ for $t \in [0, t_0]$. 

\noindent Now, let $\mathfrak C:=u^*-u^-$. By the standard theory of conservation laws \cite{bressan2000} yielding the solutions $u^\flat (t, x), u^\sharp(t, x)$ to \eqref{eq:sys_ub}, \eqref{eq:sys_ud}, for any $0< \varepsilon < \frac{\mathfrak C}{4}$, there exist $t_{\varepsilon}>0$ and $\delta_{\varepsilon}>0$ such that

\begin{equation}
    \label{eq:epsilon}
    |u^\flat(t, x)-u^-|<\varepsilon, \quad |u^\sharp(t, x)-u^*|<\varepsilon \quad \text{for} \quad  t \le t_{\varepsilon}, \quad |x| \le \delta_{\varepsilon}.
\end{equation}
In particular, 
\[u^\flat - u^\sharp = u^\flat - u^- + u^- - u^* + u^*-u^\sharp > \frac{\mathfrak C}{2}, \quad t \in [0, t_{\varepsilon}],\]
namely the denominator of the integrand of the Picard operator $\mathcal P$ in \eqref{eq:PicardOp2B} is uniformly bounded from below for $t \in [0, t_{\varepsilon}]$. Consequently, choosing $t_0 \le t_{\varepsilon}$, we deduce that $\mathcal{P}:\mathcal F \rightarrow \mathcal F$. It remains to show that it is a contraction. To this end, let $y(t), z(t)$ be two Lipschitz functions with $d(y,z)=\delta$, so that
\[
|y(t)-z(t)|\leq \delta t, \quad  t\in[0,t_0].
\]
We write
\begin{equation}
(\mathcal{P}y)(t)-(\mathcal{P}z)(t) = \displaystyle\int_0^{T} A(s)ds + \int_0^{T} B(s)ds,     
\end{equation}
where
\begin{equation}\label{eq:A}
    A(s) = \phi\left(u^\flat(s,y(s)-),u^\sharp(s,y(s))\right)-\phi\left(u^\flat(s,z(s)-),u^\sharp(s,y(s))\right),
\end{equation}
\begin{equation}\label{eq:B}
    B(s) = \phi\left(u^\flat(s,z(s)-),u^\sharp(s,y(s))\right)-\phi\left(u^\flat(s,z(s)-),u^\sharp(s,z(s))\right).
\end{equation}
We start with $B(s)$. Applying the mean value theorem,
    \[
    B(s) = \phi_v\left(u^\flat(s,z(s)-),\widehat u^\sharp\right)\cdot\left(u^\sharp(s,y(s))-u^\sharp(s,z(s))\right),
    \]
where $\widehat u^\sharp$ lies between $u^\sharp(s,y(s))$ and $u^\sharp(s,z(s)))$.
Again, by the mean value theorem,
    \[
    \phi_v(u^\flat(s,z(s)-),\widehat u^\sharp) = \phi_v(u^-,u^*) + \phi_{vu}   (\widehat u^-,u^*)\left(u^\flat-u^-\right)+ \phi_{vv}(u^\flat, \widehat u^*)\left(\widehat u^\sharp-u^*\right),
    \]
    where $\widehat u^-, \widehat u^*$ lie between $u^-, u^\flat$ and $u^*, u^\sharp$ respectively and, by construction (see \eqref{eq:lambda2B}),
    \[
    \phi_v(u^-,u^*) = \frac{1}{u^--u^*}\left(\frac{f(u^-)-g(u^*)}{u^--u^*}-g^\prime(u^*)\right) = 0.
    \]
    Moreover, recalling from Assumptions \ref{assump} that $g''(u)>C$ for all $u \in [0, u_{\max}]$,
    \begin{align*}
    |u^\sharp(s,y(s))-u^\sharp(s,z(s))| = \left| (g^\prime)^{-1}\left(\frac{y(s)}{s}\right)-(g^\prime)^{-1}\left(\frac{z(s)}{s}\right)\right| \le  (\inf_u |g''(u)|)^{-1} \left|\frac{y(s)-z(s)}{s}\right| \le C d(y, z). 
    \end{align*}
    Putting everything together,
    \begin{align*}
        |B(s)| \le C\sup_{(u^\flat, u^\sharp) \, : \, |u-u^-|<\varepsilon, \, |v-u^*| < \varepsilon} |D^2 \phi(u, v)|(|u^\flat-u^-|+|u^\sharp-u^+|)  \,\, d(y, z) < 2 C \varepsilon \sup |D^2 \phi| d(y, z),
    \end{align*}
    where we choose $\varepsilon$ in \eqref{eq:epsilon} small enough that $C \varepsilon \sup |D^2 \phi|<\frac 14.$
    To estimate the term $A(s)$ in \eqref{eq:A}, we consider the function
\[
(w,s)\rightarrow\Phi(w,s)=\phi(w,u^\sharp(s,y(s)))=\frac{f(w)-g(u^\sharp(s,y(s)))}{w-u^\sharp(s,y(s))}, \quad 0<s<t.
\]
Notice that $\Phi$ is Lipschitz continuous with respect to $w$ and we denote the Lipschitz constant by $L$. Moreover, the function $u^\flat(t,x)$ defined in \eqref{eq:sys_ub} satisfies the assumptions of Lemma \ref{lemma2B}, for some $\varepsilon>0$.
Indeed, note that $u^\flat(\cdot, x)$ is increasing and, for any $y(t)$ (resp. $z(t)$) in $\mathcal{F}$, there holds: 
\[\dot y(t)-f'(u^\flat) < \dot y(t) - f'(u^*),\]
where we used that $u^\flat<u^*$ for $u^\flat$ sufficiently close to $u^-<u^*$. To show that $u^\flat(t, y(t))$ is a decreasing function of time, we need
\[\dot y - f'(u^*)<0.\]
However, $y(t) \in \mathcal F$ implies
\[\dot y - f'(u^*) < g'(u^*)+\varepsilon-f'(u^*) = \frac{g(u^*)-f(u^-)}{u^*-u^-}-f'(u^*)+\varepsilon<\frac{f(u^*)-f(u^-)}{u^*-u^-}-f'(u^*)+\varepsilon<0\]
for $\varepsilon>0$ sufficiently small. Moreover, it obviously holds that $\dot y\ge g'(u^*)-\varepsilon$.
Using Lemma \ref{lemma2B} we can thus estimate 
\begin{align}
    \int_0^{T}|A(s)|ds &=\int_0^{T} |\Phi(u^\flat(s,y(s)-),s) - \Phi(u^\flat(s,z(s)-))| ds
    \\
    &\leq L\cdot\int_0^{T} |u^\flat(s,y(s)-),s)-u^\flat(s,z(s)-)| ds
    \\
    & \leq \frac{L}{\varepsilon}(u^\flat_{max}-u^\flat_{min})\sup_{0<s<t}|y(t)-z(t)|
    \\
    &\leq \frac{L}{\varepsilon}(u^\flat_{max}-u^\flat_{min})\cdot td(y,z).
\end{align}
We can now choose $\varepsilon$ in \eqref{eq:epsilon} such that
\[
u^\flat_{max}-u^\flat_{min} \leq \frac{\epsilon}{4L},
\]
yielding, for $t \in [0, t_0]$
\begin{equation*}
    \int_0^{T}|A(s)|ds \leq \frac{t}{4}d(y,z)<\frac 1 4 d(y,z).
\end{equation*}
\end{proof}


\subsubsection{Case 2C: $u^- > u^-_{\mathrm{lim}}$.}
Following the construction in \eqref{eq:case2C}, we modify the initial datum of $u^\flat$ in \eqref{eq:sys_ub} as
\begin{align}
u^\flat(0,x)&=\begin{cases}
\bar u(x), \quad x<0,\\
u_{\max}, \quad x>0.
\end{cases}
\end{align}
Since $u^-=\lim_{x\to0-}\bar u(x)$, for $t>0$ the solution $u^\flat$ develops a shock at $x=\lambda t$, with
\[
\lambda=\frac{f(u_{\max})-f(u^-)}{u_{\max}-u^-}<0,
\]
and $u^\flat(t,x)=u_{\max}$ for $x>\lambda t$.

In the region $x<\lambda t$, one has $u^\flat(t,x)\leq u^-$. Since $\lambda<g'(u_{\max})$, we have $u^\sharp=u_{\max}$ there, and hence $u^\flat<u^\sharp$. By the concavity of $f$ and the identity $f(u_{\max})=g(u_{\max})$,
\begin{align}
\lambda
&=\frac{f(u_{\max})-f(u^-)}{u_{\max}-u^-}
<\frac{f(u_{\max})-f(u^\flat)}{u_{\max}-u^\flat}
<\frac{g(u_{\max})-f(u^\flat)}{u^\sharp-u^\flat}
<\frac{g(u^\sharp)-f(u^\flat)}{u^\sharp-u^\flat}
=H(t,x).
\end{align}
Thus $\dot y(t)>\lambda$. In particular, the solution $y(t)$ of \eqref{eq:Yode}, starting from the origin, immediately enters the region $x>\lambda t$, where $u^\flat\equiv u_{\max}$ (see Figure \ref{fig:solCase2c}).

The existence and uniqueness of a solution to \eqref{eq:Yode} is then straightforward. Indeed, $H(t,y)$ is uniformly Lipschitz in $y$ for sufficiently small times, so the Cauchy--Lipschitz theorem applies. To see this, for every $\varepsilon>0$, there exist $\delta_\varepsilon>0$ and $t_\varepsilon>0$ such that
\[
|u_{\max}-u^\sharp(t,x)|<\varepsilon,
\qquad |x|\leq\delta_\varepsilon,\quad
t\in[0,t_\varepsilon].
\]
Since $u^\flat=u_{\max}$ and $f(u_{\max})=g(u_{\max})$, we have
\[
H(t,y)
=\frac{g(u_{\max})-g(u^\sharp(t,y))}
{u_{\max}-u^\sharp(t,y)}
=g'(u_{\max})+\mathcal{O}(\varepsilon),
\]
uniformly for $|y|\leq\delta_\varepsilon$ and $t\in[0,t_\varepsilon]$.
Moreover, since $u^\sharp(t,\cdot)$ is a rarefaction wave,
\[
|u^\sharp(t,y)-u^\sharp(t,z)|\lesssim |y-z|.
\]
The mean value theorem therefore gives
\[
\begin{aligned}
|H(t,y)-H(t,z)|
&\lesssim |u^\sharp(t,y)-u^\sharp(t,z)|
\lesssim |y-z|,
\end{aligned}
\]
for $|y|,|z|\leq\delta_\varepsilon$ and $t\in[0,t_\varepsilon]$. Hence $H$ is uniformly Lipschitz in $y$ in this region, and the Cauchy--Lipschitz theorem yields the existence and uniqueness of the solution to \eqref{eq:Yode} for sufficiently small times.
\begin{figure}[h!]
\centering
\begin{tikzpicture}[scale=1]
  \draw[->] (-3.4,0) -- (3.4,0) node[right] {$x$};
  \draw[->] (0,-0.5) -- (0,4) node[above] {$t$};

  \draw[very thick, black] (0,0) -- (-2.5,4);
  \node at (-2.5,4) [above] {$\frac{x}{\lambda}$};
  \draw[very thick, blue] (0,0) -- (-1.5,4);
  \node at (-1.3,4) [above] {$\frac{x}{g^\prime(u_{\max})}$};
  \draw[thick, blue, dashed] (0,0) -- (-1,4);
  \draw[thick, blue, dashed] (0,0) -- (-0.5,4);
  \draw[thick, blue, dashed] (0,0) -- (0.5,4);
  \draw[thick, blue, dashed] (0,0) -- (1,4);
  \draw[very thick, blue] (0,0) -- (1.7,4);
  \node at (1.9,4) [above] {$\frac{x}{g^\prime(u^+)}$};
  \draw[thick, blue, dashed] (0,0) -- (0.5,4);
  \draw[thick, blue, dashed] (0.5,0) -- (2,4);
  \draw[thick, blue, dashed] (1,0) -- (2.9,4);
  \draw[thick, blue, dashed] (1.5,0) -- (3.8,4);
  \node at (3,1) [above] {\color{blue}$g$};
  \node at (-0.2,0) [below] {$0$};
  \draw[ultra thick, green] (0,0) -- (-0.05,0.25);
  \draw[ultra thick, green] (-0.05,0.25) -- (-0.3,1.1);
  \draw[ultra thick, green] (-0.3,1.1) -- (-0.5,2);
  \draw[ultra thick, green] (-0.5,2) -- (-0.7,3.5);
\end{tikzpicture}
\caption{Interface location in Case 2C (green line) along with the characteristic curves (blue lines) of solution $u^\sharp(t,x)$ and the shock curve $x=\lambda\,t$ (black line).}\label{fig:solCase2c}
\end{figure}

\medskip

The remaining cases, Case 3 in Section \ref{sec:case3} and Case 4 in Section \ref{sec:case4}, can be addressed using the same techniques.

\subsection{Case 3. $\theta^-=0, \theta^+=1$ ($g \rightarrow f$)}\label{sec:case3solution}
As in the previous case, we consider two solutions:
\begin{itemize}
\item The solution $u=u^\flat(t,x)$ of the Cauchy problem
    \begin{equation}\label{eq:sys_ubC3}
    \begin{array}{cc}
    u_t+g(u)_x=0, 
    &
    u(0,x)=\begin{cases}
    \bar u(x) \mbox{ if } x<0,\\
    0 \mbox{ if } x>0.
    \end{cases}
    \end{array}
    \end{equation}
    Since $\bar u$ is decreasing for $x<0$, the solution $u^\flat$ will contain only rarefaction fronts, together with a rarefaction fan centered at the origin. We recall from Section \ref{sec:case3} that there are three different cases 3A, 3B, 3C. In particular, the initial profile $\bar u$ satisfies:  
    \[
    \bar u(0-)=
    \begin{cases}
        u^-, & \text{in Case~3A},\\
        v^*, & \text{in Case~3B},\\
        0.   & \text{in Case~3C}.
    \end{cases}
\]
\item The solution $u=u^\sharp(t,x)$ of the Cauchy problem
    \begin{equation}\label{eq:sys_udC3}
    \begin{array}{cc}
    u_t+f(u)_x=0, 
    \end{array}
    \end{equation}
    where, in Case 3A and 3B, the initial profile 
    \begin{equation}
    u(0,x)=\begin{cases}
    u^+ \mbox{ if } x<0,\\
    \bar u(x) \mbox{ if } x>0,
    \end{cases}
    \end{equation}
    with \[\ u^+=\lim_{x\rightarrow 0+}\bar u(x),\]
    while in Case 3C
    \[u^\sharp(0,x)=\begin{cases}
    0 \mbox{ if } x<0, \\
    \bar u(x) \mbox{ if } x>0.
    \end{cases}\]
    Since $\bar u$ increases in $x>0$, $u^\sharp$ contains shocks and compression waves.
\end{itemize}

The full solution $u=u(t,x)$ is obtained by gluing these two solutions
across an interface $y=y(t)$, which is required to satisfy the ODE
\begin{equation}\label{eq:case3ode}
    \dot y(t)=H(t,y(t)),
    \qquad
    H(t,y(t))
    =\frac{g\bigl(u^\flat(t,y(t)-)\bigr)
    -f\bigl(u^\sharp(t,y(t)+)\bigr)}
    {u^\flat(t,y(t)-)-u^\sharp(t,y(t)+)}.
\end{equation}

The existence and uniqueness of the solution to \eqref{eq:case3ode} follow
from the same arguments as in Case~2. We therefore omit the details.

\subsection{Case 4. $\theta^-=1, \theta^+=1$ ($f \rightarrow f$)}
We distinguish three sub-cases.
\subsubsection{Case 4A: $u^-\leq u^+$.} In this case, $u=u(t,x)$ is the solution to the scalar conservation law \eqref{eq:CLf} with the the initial datum $\bar u$, increasing on both $x<0$ and $x>0$, and with $\theta(t,x)\equiv 1$ for all $t,x$.
\subsubsection{Case 4B: $u^->u^+$ with $u^-<u^-_{\mathrm{lim}}$ and $u^+>u^+_\mathrm{lim}$.}
Motivated by the solution of the Riemann problem
in Section \ref{sec:case4}, in this case two interfaces are generated by the initial discontinuity at t = 0. Specifically, we consider two solutions:
\begin{itemize}
\item The solution $u=u^\flat(t,x)$ of the Cauchy problem
    \begin{equation}\label{eq:sys_ub4}
    \begin{array}{cc}
    u_t+f(u)_x=0, 
    &
    u(0,x)=\left\{\begin{array}{l}
    \bar u(x) \mbox{ if } x<0,\\
    u^- \mbox{ if } x>0.
    \end{array}\right.
    \end{array}
    \end{equation}
\item The solution $u=u^\sharp(t,x)$ of the Cauchy problem
    \begin{equation}\label{eq:sys_ud4}
    \begin{array}{cc}
    u_t+f(u)_x=0, 
    &
    u(0,x)=\left\{\begin{array}{l}
    u^+ \mbox{ if } x<0,\\
    \bar u(x) \mbox{ if } x>0.
    \end{array}\right.
    \end{array}
    \end{equation}
\end{itemize} 
 The initial datum \(\bar u\) is increasing, and both solutions are monotonically increasing, consisting only of shocks and compression waves.
 We also consider the solution to \eqref{eq:CLg} consisting of a single centered rarefaction wave. This can be implicitly defined by setting
\begin{equation}
    u^\natural(t,x)=w \quad\mbox{if}\quad x/t = g'(w).
\end{equation}
Following \cite{ABS2025}, we then construct a solution to
the Cauchy problem by setting
\begin{equation}
    u(t,x) = \begin{cases}
        u^\flat(t,x) & \mbox{if } x < y(t),
        \\
        u^\natural(t,x) & \mbox{if } y(t) < x < z(t),
        \\
        u^\sharp(t,x) & \mbox{if } x > z(t),
    \end{cases}
\end{equation}
where the two interfaces $y(\cdot)$ and $z(\cdot)$ are uniquely determined by solving the ODEs
\begin{align}
    \dot y(t) =\frac{g(u^\natural(t,y(t)))-f(u^\flat(t,y(t)))}{u^\natural(t,y(t))-u^\flat(t,y(t))}, & & y(0) = 0,
    \\
    \dot z(t) =\frac{g(u^\natural(t,z(t)))-f(u^\sharp(t,z(t)))}{u^\natural(t,z(t))-u^\sharp(t,z(t))}, & & z(0) = 0.
\end{align}
The existence and uniqueness of the solutions $y$ and $z$ is proved by the same arguments as in Case 2B and Case 3B, respectively.

\subsubsection{Case 4C: $u^->u^+$ with $u^->u^-_{\mathrm{lim}}$ and/or $u^+<u^+_\mathrm{lim}$.}
This case can be treated as in Case 4B. However, if $u^->u^-_{\mathrm{lim}}$, the existence and uniqueness of the interface $y=y(t)$ follow from the same arguments used in Case 2C. Similarly, if $u^+<u^+_{\mathrm{lim}}$, the existence and uniqueness of $z=z(t)$ follow from the same arguments as in Case 3C.

\bigskip

Finally, the existence of solutions follows by a standard bootstrap argument, as in \cite{ABS2025}.


\section{Two-Phase Traffic Model}\label{sec:2phase}
In this section, we consider the situation where $f(u)=g(u)$ in the free-flow regime. 
\begin{figure}[h!]
\centering
\begin{tikzpicture}[scale=1.0]
  \draw[->] (-0.5,0) -- (5,0) node[right] {};
  \draw[->] (0,-0.5) -- (0,3.5) node[above] {};
  \draw[domain=0:4, smooth, variable=\x, red, thick] 
       plot ({\x}, {0.8*\x*(4-\x)});
  \draw[domain=0:1, smooth, variable=\x, blue, thick] 
       plot ({\x}, {0.8*\x*(4-\x)});    
  \draw[domain=1:4, smooth, variable=\x, blue, thick] 
       plot ({\x}, {2.4+1.6*(\x-1)-1.1*(\x-1)*(\x-1)+0.1*(\x-1)*(\x-1)*(\x-1)});
  \node at (2.8,2.8) [above] {\color{red}$f$};
  \node at (2.8,1.5) [above] {\color{blue}$g$};
  \node at (-0.2,0) [below] {$0$};
  \node at (4,0) [below] {$u_{\max}$};
  \draw[thick, black, dashed] (1.3,0) -- (1.3,2.8);
  \node at (1.3,0) [below] {$u_{\mathrm{free}}$};
\end{tikzpicture}
\begin{tikzpicture}[scale=1.0]
  \draw[->] (-0.5,0) -- (5,0) node[right] {};
  \draw[->] (0,-0.5) -- (0,3.5) node[above] {};
  \draw[domain=0:4, smooth, variable=\x, red, thick] 
       plot ({\x}, {0.8*\x*(4-\x)});
  \draw[domain=0:0.8, smooth, variable=\x, blue, thick] 
       plot ({\x}, {0.8*\x*(4-\x)});    
  \draw[domain=0.8:4, smooth, variable=\x, blue, thick] 
       plot ({\x}, {0.5*(\x+0.455)*(4-\x)});
  \node at (2.8,2.8) [above] {\color{red}$f$};
  \node at (2.5,1.5) [above] {\color{blue}$g$};
  \node at (-0.2,0) [below] {$0$};
  \node at (4,0) [below] {$u_{max}$};
  \draw[thick, black, dashed] (0.8,0) -- (0.8,2);
  \node at (0.8,0) [below] {$u_{\mathrm{free}}$};
   \draw[thick, black] (0.8,{0.8*0.8*(4-0.8)}) -- (2.1,{0.8*2.1*(4-2.1)});
   \node at (2.1,0) [below] {$\widehat{u}_{\mathrm{free}}$};
   \draw[thick, black, dashed] (2.1,0) -- (2.1,{0.8*2.1*(4-2.1)});
\end{tikzpicture}
\caption{Fundamental diagram for the two-phase traffic model considered in Section \ref{sec:2phase}.}\label{fig:freeFlow}
\end{figure}
\begin{assumptions}\label{assump-Two-Phase Model: }
Let $u_{\max}>0$ and let
$u_{\mathrm{free}}\in(0,u_{\max})$ denote the transition density. Assume that
\[
f\in C^2([0,u_{\max}])
\]
is a strictly concave flux function satisfying
\[
f(0)=f(u_{\max})=0.
\]
Assume that
\[
g\in C([0,u_{\max}])\cap
C^2\!\left([0,u_{\max}]\setminus\{u_{\mathrm{free}}\}\right)
\]
is a concave flux function satisfying
\[
g(0)=g(u_{\max})=0.
\]
Finally, assume that the two fluxes coincide throughout the free-flow regime, namely,
\[
f(u)=g(u),
\qquad
\forall\,u\in[0,u_{\mathrm{free}}].
\]
\end{assumptions}


\begin{remark}[Regularity of $g$]
Since $f=g$ on $[0,u_{\mathrm{free}}]$, we necessarily have
\[
f'(u_{\mathrm{free}})
=
g'(u_{\mathrm{free}}-).
\]
If, in addition,
\[
g'(u_{\mathrm{free}}-)
=
g'(u_{\mathrm{free}}+),
\]
that is, if $g\in C^1([0,u_{\max}])$ (see Figure~\ref{fig:freeFlow}, left), then the construction of the Riemann solvers and the proof of existence of weak solutions remain essentially unchanged.
\end{remark}


Unless otherwise stated, we allow $g$ to have a corner at $u_{\mathrm{free}}$ and assume
\[
f'(u_{\mathrm{free}})
=
g'(u_{\mathrm{free}}-)
>
g'(u_{\mathrm{free}}+).
\]

\subsection{Two-Phase Model:  Case 1 ($\theta^-=\theta^+=0$, $g\rightarrow g$).}\label{sec:freeflow_case1}
We consider the conservation law
\[
u_t+(g(u))_x=0,
\]
with Riemann initial data
\[
u(0,x)=
\begin{cases}
u^-, & x<0,\\
u^+, & x>0.
\end{cases}
\]

The case $u^-<u^+$ is identical to Case~1 in the standard configuration (see Section~\ref{sec:case1}) and therefore will not be discussed further.

Assume now that
\[
u^+<u_{\mathrm{free}}<u^-.
\]
In the standard configuration, the corresponding solution consists of a centered rarefaction wave. In the present setting, however, an additional difficulty arises from the jump in the characteristic speed at the transition density,
\[
g'(u_{\mathrm{free}}-)
>
g'(u_{\mathrm{free}}+).
\]

The corresponding Riemann solver is
\begin{align}\label{eq:rar_disc}
u(t,x)=
\begin{cases}
w, &
\dfrac{x}{t}=g'(w),
\qquad
g'(u^-)\le\dfrac{x}{t}\le g'(u_{\mathrm{free}}{+}),
\\[2mm]
u_{\mathrm{free}}, &
g'(u_{\mathrm{free}}+)
\le
\dfrac{x}{t}
\le
g'(u_{\mathrm{free}}-),
\\[2mm]
w, &
\dfrac{x}{t}=g'(w),
\qquad
g'(u_{\mathrm{free}}{-})\le\dfrac{x}{t}\le g'(u^+).
\end{cases}
\end{align}

Hence, the centered rarefaction is split into two rarefaction fans separated by a constant intermediate state equal to $u_{\mathrm{free}}$. In particular, the solution is constant in the region
\[
g'(u_{\mathrm{free}}+)
\le
\frac{x}{t}
\le
g'(u_{\mathrm{free}}-).
\]


\subsection{Two-Phase Model:  Case 2 ($\theta^-=1$, $\theta^+=0$, $f\rightarrow g$).}\label{sec:freeflow_case2}
\begin{itemize}
\item If $u^->u_{\mathrm{free}}$, then the solution coincides with that of the corresponding case in the standard configuration (see Section~\ref{sec:case2}).

\item Let $u^-\leq u_{\mathrm{free}}$. The solution can then be treated exactly as in Case 1 (Section~\ref{sec:freeflow_case1}), namely by considering the entropy solution $u=u(t,x)$ of the scalar conservation law
\[
u_t+(g(u))_x=0
\]
with the same initial data.

More precisely:
\begin{itemize}
    \item if $u^-\leq u^+$, the solution consists of a single shock propagating with speed
    \[
    \lambda=\frac{g(u^+)-f(u^-)\equiv g(u^-)}{u^+-u^-},
    \]
    and we get 
    \[
        (u,\theta)(t,x) = \left\{\begin{array}{cc}
              (u^-,1) &  x/t < \lambda, \\
              (u^+,0) &  x/t > \lambda.
        \end{array}\right. 
    \]
    
    \item if $u^+<u^-<u_{\mathrm{free}}$, 
    the solution is a monotonically decreasing centered rarefaction connecting $u^-$ to $u^+$. In this case $u_x(t,x)\leq 0$ for all $(t,x)$, and therefore the choice $\theta(t,x)\equiv 0$ is admissible.
\end{itemize}
\end{itemize}

\subsection{Two-Phase Model:  Case 3 ($\theta^-=0$, $\theta^+=1$, $g\rightarrow f$).}\label{sec:freeflow_case3}
We introduce the value $\widehat{u}_{\mathrm{free}}$ such that
\begin{equation}
    \widehat{u}_{\mathrm{free}} > u_{\mathrm{free}} \quad\text{with}\quad g^\prime(u_{\mathrm{free}}+)=\frac{f(\widehat{u}_{\mathrm{free}})-g(u_{\mathrm{free}})}{\widehat{u}_{\mathrm{free}}-u_{\mathrm{free}}},
\end{equation}
see Figure \ref{fig:freeFlow}, right.
For $u^+ < \widehat{u}_{\mathrm{free}}$, the value $v^*$ given in \eqref{eq:vstar} is undefined.
We then consider three different scenarios:
\begin{itemize}
    \item If $u^+\leq u_{\mathrm{free}}$ then the solution is as \emph{Two-Phase Model:  Case 1 ($g\rightarrow g$)} given in Section \ref{sec:freeflow_case1}.
    \item If $u^+\geq \widehat{u}_{\mathrm{free}}$ then the solution of the Riemann problem remains unchanged with respect to the same-named case in the \textit{standard} configuration, given in Section \ref{sec:case3}.
    \item If $u_{\mathrm{free}}<u^+<\widehat{u}_{\mathrm{free}}$ then the point $v^*$ in \eqref{eq:vstar} is not defined, and we have to distinguish the following cases:
    \begin{itemize}
        \item for $u^-\leq u_{\mathrm{free}}$ the solution consists of a Lax-admissible single jump with 
        $$
        \lambda = \frac{f(u^+)-g(u^-)\equiv f(u^-)}{u^+-u^-}.
        $$
        \[
        (u,\theta)(t,x) = \left\{\begin{array}{cc}
             (u^-,0)     & x/t < \lambda \\
             (u^+,1)  &  x/t > {\lambda}.
        \end{array}\right.
        \]
        \item for $u^->u_{\mathrm{free}}$ the solution is given by a rarefaction, followed by a constant state and later by an upward jump:
        \begin{equation}\label{eq:sol3}
        (u,\theta)(t,x) = \left\{\begin{array}{cc}
             (u^-,0)     & x/t < g^\prime(u^-), \\
             (w,0)       &  x/t=g'(w),\ g^\prime(u^-) \le x/t \le g'(u_{\mathrm{free}}+),\\
             (u_{\mathrm{free}}, 0)       &  \; g'(u_{\mathrm{free}}+) < x/t < \lambda\\
             (u^+, 1)  &  x/t > \lambda.
        \end{array}
        \right.   
        \end{equation}
        where $\lambda$ is the speed of the jump from $u_{\mathrm{free}}$ to $u^+$, i.e.
        $$
        \lambda = \frac{f(u^+)-g(u_{\mathrm{free}})\equiv f(u_{\mathrm{free}})}{u^+-u_{\mathrm{free}}}.
        $$
       
        \begin{remark} Solution \eqref{eq:sol3} is \textbf{not unique}. Unlike Case 2C and Case 3C, where the admissible location of the interface was constrained, in the present case several choices are admissible. We could move the interface (from $\lambda$ to $g'(u_{\mathrm{free}}+)$):
         \begin{equation}\label{eq:sol3_2}
        (u,\theta)(t,x) = \left\{\begin{array}{cc}
             (u^-,0)     & x/t < g^\prime(u^-), \\
             (w,0)       &  x/t=g'(w),\ g^\prime(u^-) \le x/t \le g'(u_{\mathrm{free}}+),\\
             (u_{\mathrm{free}}, 1)       &  \; g'(u_{\mathrm{free}}+) < x/t < \lambda=\frac{f(u^+)-g(u_{\mathrm{free}})}{u^+-u_{\mathrm{free}}},\\
             (u^+, 1)  &  x/t > \lambda.
        \end{array}
        \right.   
        \end{equation}
        Alternatively, we could consider:
        \begin{equation}\label{eq:sol3_3}
        (u,\theta)(t,x) = \left\{\begin{array}{cc}
             (u^-,0)     & x/t < g^\prime(u^-), \\
             (w,0)       &  x/t=g'(w),\ g^\prime(u^-) \le x/t \le g'(u_{\mathrm{free}}+),\\
             (u_{\mathrm{free}}, 0)       &  \; g'(u_{\mathrm{free}}+) < x/t < g'(u_{\mathrm{free}}-),\\
             (w, 0)       &  x/t=g'(w),\ g'(u_{\mathrm{free}}-) \le x/t \le g'(0),\\
             (u^+, 1)  &  x/t > \lambda=\frac{f(u^+)}{u^+}.
        \end{array}
        \right.   
        \end{equation}
        The non-uniqueness stems from the discontinuity of $g'$ at $u_\mathrm{free}$, since the plateau at $u=u_\mathrm{free}$ is included in the definition of the rarefaction wave \eqref{eq:rar_disc}. To select a physically meaningful interface, based on traffic modeling considerations, we assumed in \eqref{eq:sol3} that the transition from the $g$-flux regime (acceleration) to the $f$-flux regime (deceleration) propagates the Rankine--Hugoniot speed $\lambda$ connecting $u_\mathrm{free}$ and $u^+$.
        \end{remark}
        \begin{figure}[h!]
        \centering
        \begin{tikzpicture}[scale=1.0]
          \draw[->] (-3.4,0) -- (4.4,0) node[right] {};
          \draw[->] (-1,-0.5) -- (-1,4) node[above] {$u$};
          
          \draw[thick, black] (-3,2) -- (-0.5,2);
          \draw[thick, black] (-0.5,2) -- (0.25,1);
          \draw[thick, black] (0.25,1) -- (1.5,1);
          \draw[thick, black] (1.5,1) -- (1.5,1.5);
          \draw[thick, black] (1.5,1.5) -- (3.8,1.5);
          
          \node at (0.1,0) [below] {$g^\prime(u_\mathrm{free}+) t$};
          \draw[thick, black, dashed] (0.25,0) -- (0.25,1);
          
          \node at (1.5,0) [below] {$\lambda t$};
          \draw[thick, black, dashed] (1.5,0) -- (1.5,1);
          \node at (-2.8,2) [above] {$u^-$};
          \node at (3.8,1.5) [above] {$u^+$};
          \node at (0.8,1) [above] {$u_{\mathrm{free}}$};
          \fill[blue] (1.5,1) circle (3pt);
          \draw[line width=2pt, blue] (0.25,1) -- (1.5,1);
          \draw[thick, black, dashed] (1.5,1) -- (3,1);
          \draw[thick, black, dashed] (3,0) -- (3,1);
          \node at (3,0) [below] {$g^\prime(u_\mathrm{free}-) t$};
        \end{tikzpicture}
        \caption{Representation of solution \eqref{eq:sol3}.}\label{fig:RScase4}
        \end{figure}
    \end{itemize}
\end{itemize}

\subsection{Two-Phase Model:  Case 4 ($\theta^-=1$, $\theta^+=1$, $f\rightarrow f$). }
\begin{itemize}
\item If $u^-$ and $u^+$ are both less or equal $u_{\mathrm{free}}$ then the case can be treated as \emph{Two-Phase Model:  Case 1 ($g\rightarrow g$)}, Section \ref{sec:freeflow_case1}; 
\item if $u^-\leq u_{\mathrm{free}}$ and $u^+>u_{\mathrm{free}}$ then we fall back into \emph{Two-Phase Model:  Case 3 ($g\rightarrow f$)}, Section \ref{sec:freeflow_case3}; 
\item if $u^->u_{\mathrm{free}}$ and $u^+\leq u_{\mathrm{free}}$ then we fall back into \emph{Two-Phase Model:  Case 2 ($f\rightarrow g$)}, Section \ref{sec:freeflow_case2}.
\item if $u^->u_{\mathrm{free}}$ and $u^+>u_{\mathrm{free}}$ then fall back into the Case 4 ($f\rightarrow f$) in the standard configuration, Section \ref{sec:case4}.
\end{itemize}

\subsection{Existence of weak solutions to the Two-Phase Model}
We only emphasize the cases where the proof differs from the standard case under Assumption \ref{assump}.

\subsubsection{Two-Phase Model:  Case 2 with $u^-\leq u_{\mathrm{free}}$ and $u^-<u^+$.}
Referring to the Riemann Solver given in Section \ref{sec:freeflow_case2}, we assume
$$
\theta(t,x)=\left\{\begin{array}{cc}
     1 &  x\leq y(t),\\
     0 &  x> y(t),
\end{array}\right.
$$
where the interface verifies
\begin{equation}\label{eq:Yode2fr}
    \dot y(t)=H(t,y(t)), \quad y(0)=0, \quad H(t,x) = \displaystyle\frac{f(u^\flat(t,x-))-g(u^\sharp(t,x+))}{u^\flat(t,x-)-u^\sharp(t,x+)},
\end{equation} 
with $u^\flat(t,x)$ and $u^\sharp(t,x)$ solutions of \eqref{eq:sys_ub} and \eqref{eq:sys_ud} respectively.

In this case, the function $u^\flat(t,x)$ develops shocks and is non-decreasing. Since $\bar u(0-)=u^-$, there exists $\delta>0$ such that $u^\flat(t,x)\leq u^-$ for $t+|x|<\delta$. Moreover, since we assume $u^-\leq u_{\mathrm{free}}$, in this region we have $f(u^\flat(t,x))\equiv g(u^\flat(t,x))$ and   
\begin{equation}
    H(t,y(t)) = \displaystyle\frac{g(u^\flat(t,y(t)-))-g(u^\sharp(t,y(t)+))}{u^\flat(t,y(t)-)-u^\sharp(t,y(t)+)},
\end{equation}
which is locally Lipschitz continuous. Indeed, if $u_{\mathrm{free}}$ lies between $u^\flat(t,y(t)-)$ and $u^\sharp(t,y(t)+)$, then the denominator remains strictly bounded away from zero, ensuring the local Lipschitz continuity of the quotient. On the other hand, if $u_{\mathrm{free}}$ does not belong to the interval between the two states, then $g$ is locally $C^2$ in a neighborhood of the relevant values, and the regularity of $H$ follows directly.

\subsubsection{Two-Phase Model:  Case 3 with $u_{\mathrm{free}}<u^+\leq \hat u_{\mathrm{free}}$.}
As in Case 3 of Section \ref{sec:case3solution}, we consider the two Cauchy problems \eqref{eq:sys_ubC3} and \eqref{eq:sys_udC3}. In the present case, however, the solution to \eqref{eq:sys_udC3} is given by \eqref{eq:rar_disc}. Following the same argument as above, we obtain the solution $u=u(t,x)$ to the Cauchy problem associated with \eqref{eq:original} by gluing the two solutions across the interface $y=y(t)$, which satisfies
\begin{equation}\label{eq:Yode3fr}
    \dot y(t)=H(t,y(t)), \quad y(0)=0, \quad H(t,y(t)) = \frac{g(u^\flat(t,y(t)-))-f(u^\sharp(t,y(t)+))}{u^\flat(t,y(t)-) - u^\sharp(t,y(t)+)}.
\end{equation}
We distinguish the following two cases.
\begin{itemize}
    \item $u^- < u_{\mathrm{free}}$. 
    By construction, $\bar u(0-)=u^-$. Due to the continuity of $u^\flat(t,x)$, there exists $\bar\xi$ such that $u^\flat(t,x)=u_{\mathrm{free}}$ along the line $x=\bar\xi+g'(\bar u(\bar\xi))t$. Furthermore, for every $\epsilon>0$, there exists $\delta>0$ such that
    \[
    |u^\flat(t,x) - u^-| < \epsilon \quad \text{whenever } t \leq (x-\bar \xi)/g'(u_{\mathrm{free}}-), \ t+|x| < \delta.
    \]
    Within this region, we have $u^\flat(t,x) \leq u_{\mathrm{free}}$, which implies that $g(u^\flat(t,x)) = f(u^\flat(t,x))$. Consequently, $H(t,x)$ takes the form
    \begin{equation}\label{eq:H3fr}
    H(t,x) = \displaystyle\frac{f(u^\flat(t,x-))-f(u^\sharp(t,x+))}{u^\flat(t,x-)-u^\sharp(t,x+)},
    \end{equation}
    which is locally Lipschitz continuous owing to the regularity of $f$.
\begin{figure}[h!]
\centering
\begin{tikzpicture}[scale=0.8]
  \draw[->] (-3.4,0) -- (3.4,0) node[right] {$x$};
  \draw[->] (0,-0.5) -- (0,4) node[left] {$t$};
  
  \draw[thick, blue, dashed] (0,0) -- (2.4,4);
  \draw[thick, blue, dashed] (0,0) -- (3,4);
   \draw[thick, blue, dashed] (0,0) -- (3.6,4);
   \node at (3.9,4) [above] {$\frac{x}{g^\prime(0)}$};
  \draw[thick, blue, dashed] (0,0) -- (2,4);
  \node at (2.2,4) [above] {$\frac{x}{g^\prime(u^-)}$};
  \draw[thick, blue, dashed] (-0.5,0) -- (0.6,4);
  \node at (0.8,4) [above] {$\frac{x-\bar\xi}{g^\prime(u_{\mathrm{free}}-)}$};
  \node at (-2,1) [above] {\color{blue}$g$};
  \draw[thick, gray] (-0.4,0) -- (0,0.8);
  \draw[thick, gray] (0,0.8) -- (0.4,0);
   \node at (-0.4,0) [below] {$-\delta$};
   \node at (0.4,0) [below] {$\delta$};
\end{tikzpicture}
\begin{tikzpicture}[scale=0.8]
  \draw[->] (-3.4,0) -- (3.4,0) node[right] {$x$};
  \draw[->] (0,-0.5) -- (0,4) node[left] {$t$};
  \draw[thick, blue, dashed] (0,0) -- (-1.5,4);
  \draw[thick, blue, dashed] (0,0) -- (-1,4);
  \draw[thick, blue, dashed] (0,0) -- (-0.5,4);
  \draw[thick, blue, dashed] (0,0) -- (0.5,4);
  \draw[thick, blue, dashed] (0,0) -- (2.4,4);
  \draw[thick, blue, dashed] (0,0) -- (3,4);
   \draw[thick, blue, dashed] (0,0) -- (3.6,4);
   \node at (3.9,4) [above] {$\frac{x}{g^\prime(0)}$};
  \draw[very thick, black] (0,0) -- (2,4);
  \node at (2.4,4) [above] {$\frac{x}{g^\prime(u_{\mathrm{free}}-)}$};
  \draw[very thick, black] (0,0) -- (0.8,4);
  \node at (0.7,4) [above] {$\frac{x}{g^\prime(u_{\mathrm{free}}+)}$};
  \draw[thick, blue, dashed] (-0.5,0) -- (-2,4);
  \draw[thick, blue, dashed] (-1,0) -- (-2.9,4);
  \draw[thick, blue, dashed] (-1.5,0) -- (-3.8,4);
  \node at (-1,4) [above] {$x/\alpha$};
  \node at (-3,1) [above] {\color{blue}$g$};
  \draw[thick, gray] (-0.4,0) -- (0,0.8);
  \draw[thick, gray] (0,0.8) -- (0.4,0);
   \node at (-0.4,0) [below] {$-\delta$};
   \node at (0.4,0) [below] {$\delta$};
\end{tikzpicture}
\caption{Two-Phase Model:  Case 3. Representation of the characteristic curves of the rarefaction $u^\flat(t,x)$, when $u^-<u_{\mathrm{free}}<u^+<\widehat u_{\mathrm{free}}$ on the left, and when $u^-\geq u_{\mathrm{free}}$ and $u_{\mathrm{free}}<u^+<\widehat u_{\mathrm{free}}$ on the right.}\label{fig:curves3fr}
\end{figure}

    \item $u^- \geq u_{\mathrm{free}}$. 
    Recall from \eqref{eq:sol3} that the interface in the solution of the Riemann problem moves with the speed $\lambda$, corresponding to a jump from $u_{\mathrm{free}}$ to $u^+$. Since $g(u_{\mathrm{free}}) = f(u_{\mathrm{free}})$, this speed is given by
    $$
        \lambda = \frac{f(u^+)-g(u_{\mathrm{free}})\equiv f(u_{\mathrm{free}})}{u^+-u_{\mathrm{free}}}.
    $$
    Given that $u_{\mathrm{free}} < u^+ < \widehat{u}_{\mathrm{free}}$, the definition of $\widehat u_{\mathrm{free}}$ and the concavity of $f$ implies
    $$
    g'(u_{\mathrm{free}}+)=\frac{f(\widehat{u}_{\mathrm{free}})-f(u_{\mathrm{free}})}{\widehat{u}_{\mathrm{free}}-u_{\mathrm{free}}}<\frac{f(u^+)-f(u_{\mathrm{free}})}{\widehat{u}_{\mathrm{free}}-u_{\mathrm{free}}}<f'(u^+).
    $$
    Consequently, we obtain the chain of inequalities
    $$
    g'(u_{\mathrm{free}}+) < f'(u^+)<\lambda<f'(u_{\mathrm{free}})=g'(u_{\mathrm{free}}-).
    $$
    By construction, the initial data $u(0, x)$ in \eqref{eq:sys_ubC3} and \eqref{eq:sys_udC3} satisfy $\bar u(0-)=u_{\mathrm{free}}$ and $\bar u(0+)=u^+$. For every $\epsilon>0$, there exists $\delta>0$ such that
    \begin{align}
        |u^\flat(t,x) - u_{\mathrm{free}}| < \epsilon &\quad \text{whenever } (x,t)\in \{t+|x| < \delta\}\cap \mathcal{D},
        \\
        |u^\sharp(t,x) - u^+| < \epsilon & \quad\text{for } t+|x| < \delta,
    \end{align}
    where, the region $\mathcal{D}$ is defined according to the sign of the parameter $\alpha = g'(\min\{u^-,u^+\})<g'(u_{\mathrm{free}}+)$: if $\alpha <0 $ then $\mathcal{D}=\{t>x/\alpha, x<0\}\cup\{t>0, x>0\}$; if $\alpha >0 $ then $\mathcal{D}=\{0<t<x/\alpha, x>0\}$ (see Figure \ref{fig:curves3fr}, right).
    \begin{lemma}
        Within the region $\{t+|x| < \delta\}\cap \mathcal{D}$, it holds 
        $$
        H(t,x)=\frac{g(u^\flat(t,x-))-f(u^\sharp(t,x+))}{u^\flat(t,x-) - u^\sharp(t,x+)}> g'(u_{\mathrm{free}}+).
        $$
    \end{lemma}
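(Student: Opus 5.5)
Our plan is to read $H(t,x)$ as the slope of the chord between the points $(u^\flat, g(u^\flat))$ and $(u^\sharp, f(u^\sharp))$, where $u^\flat=u^\flat(t,x-)$ and $u^\sharp=u^\sharp(t,x+)$. We then compare this chord with a reference chord from $(u_{\mathrm{free}}, f(u_{\mathrm{free}}))$, which is the left state in the Riemann solution \eqref{eq:sol3}. By the smallness conditions just stated, for $(t,x)\in\{t+|x|<\delta\}\cap\mathcal D$ we have $|u^\flat-u_{\mathrm{free}}|<\epsilon$ and $|u^\sharp-u^+|<\epsilon$. Since $u_{\mathrm{free}}<u^+$, choosing $\epsilon<(u^+-u_{\mathrm{free}})/2$ keeps the denominator $u^\flat-u^\sharp$ bounded away from zero. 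In particular $H$ is well defined and depends continuously on the pair $(u^\flat,u^\sharp)$.

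The main step is to extract a strict margin from the Riemann data. From the chain of inequalities established just before the lemma,
\[
H_0:=\frac{f(u^+)-f(u_{\mathrm{free}})}{u^+-u_{\mathrm{free}}}=\lambda>f'(u^+)>g'(u_{\mathrm{free}}+),
\]
so $\eta:=\lambda-g'(u_{\mathrm{free}}+)>0$. Here $H_0$ is exactly the value of $H$ at the limiting states, because $g(u_{\mathrm{free}})=f(u_{\mathrm{free}})$ by Assumption \ref{assump-Two-Phase Model: }. The map
\[
(a,b)\mapsto \frac{g(a)-f(b)}{a-b}
\]
is continuous (indeed Lipschitz) on the compact set $\{|a-u_{\mathrm{free}}|\le\epsilon,\ |b-u^+|\le\epsilon\}$, since $g$ is continuous and the denominator is at least $(u^+-u_{\mathrm{free}})/2$. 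Hence
\[
|H(t,x)-\lambda|\le C\epsilon ,
\]
with $C$ depending only on the Lipschitz constants of $f$ and $g$ and on $u^+-u_{\mathrm{free}}$. Taking $\epsilon<\eta/C$, and the corresponding $\delta$, gives $H>g'(u_{\mathrm{free}}+)$ throughout the region.

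The one subtle point is that $g$ is only Lipschitz near $u_{\mathrm{free}}$, with a corner there, and $u^\flat$ may lie on either side of $u_{\mathrm{free}}$. Plain continuity of $g$ is enough for the argument above. If a sharper, non-perturbative bound is wanted, we would split into two cases.
\begin{itemize}
\item If $u^\flat\le u_{\mathrm{free}}$, then $g(u^\flat)=f(u^\flat)$, and $H$ is an $f$-chord from a point $\le u_{\mathrm{free}}$ to $u^\sharp$. By concavity of $f$ this chord slope exceeds $f'(u^\sharp)$, which is close to $f'(u^+)>g'(u_{\mathrm{free}}+)$.
\item If $u^\flat>u_{\mathrm{free}}$, we use $f\ge g$ to get $f(u^\flat)\ge g(u^\flat)$. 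Adding and subtracting $f(u^\flat)$ bounds $H$ below by an $f$-chord, plus the term $(f(u^\flat)-g(u^\flat))/(u^\sharp-u^\flat)\ge 0$.
\end{itemize}
In both cases concavity of $f$ gives $H\ge f'(u^\sharp)$, and then $f'(u^\sharp)\ge f'(u^+)-\|f''\|_{L^\infty}\epsilon>g'(u_{\mathrm{free}}+)$ for small $\epsilon$. This case split is the step we expect to need the most care, because the corner of $g$ rules out a Taylor expansion of $g$ at $u_{\mathrm{free}}$. The comparison with $f$ through $f\ge g$ avoids that issue entirely.
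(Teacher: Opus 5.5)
Your main route is perturbative, and it rests entirely on $|u^\flat-u_{\mathrm{free}}|<\epsilon$ holding uniformly on $\{t+|x|<\delta\}\cap\mathcal D$. The text does assert this just before the lemma, but it cannot hold when $u^->u_{\mathrm{free}}$. In that case $u^\flat$ contains the $g$-rarefaction centered at the origin. So in every neighborhood of the origin, the sector $\alpha t<x<g'(u_{\mathrm{free}}+)t$, which lies in $\mathcal D$, carries every value of $u^\flat$ in $(u_{\mathrm{free}},\min\{u^-,u^+\})$. To the right of $x=g'(u_{\mathrm{free}}-)t$, the values of $u^\flat$ drop below $u_{\mathrm{free}}$ towards $0$. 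Closeness to $u_{\mathrm{free}}$ holds only in the plateau sector $g'(u_{\mathrm{free}}+)\le x/t\le g'(u_{\mathrm{free}}-)$, and outside it the estimate $|H-\lambda|\le C\epsilon$ is simply false. This is why the paper's proof uses no smallness of $u^\flat-u_{\mathrm{free}}$ at all. It separates $u^\flat\le u_{\mathrm{free}}$ (right of the ray $x=g'(u_{\mathrm{free}}+)t$) from $u_{\mathrm{free}}<u^\flat<u^+$ (left of it, inside $\mathcal D$), and uses only $u^+\le u^\sharp<\widehat u_{\mathrm{free}}$. The left sector is exactly what the subsequent argument needs in order to push integral curves from the origin into $x>g'(u_{\mathrm{free}}+)t$. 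So the perturbative route misses the part of the lemma that is actually used.

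Your fallback case split is the paper's, but you close it with $H\ge f'(u^\sharp)\approx f'(u^+)>g'(u_{\mathrm{free}}+)$, and the last inequality is false in general. The value $g'(u_{\mathrm{free}}+)$ is the slope of the $f$-chord over $[u_{\mathrm{free}},\widehat u_{\mathrm{free}}]$, so strict concavity gives $f'(\widehat u_{\mathrm{free}})<g'(u_{\mathrm{free}}+)$. Hence the inequality fails for $u^+$ near $\widehat u_{\mathrm{free}}$. For example, $f(u)=u(1-u)$, $u_{\mathrm{free}}=0.2$ and $g'(u_{\mathrm{free}}+)=0.2$ give $\widehat u_{\mathrm{free}}=0.6$, while $u^+=0.5$ has $f'(u^+)=0$. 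That step of the chain displayed before the lemma is not justified. Only $\lambda>g'(u_{\mathrm{free}}+)$ is, by monotonicity of chord slopes from $u_{\mathrm{free}}$, so your $\eta>0$ survives but the fallback does not.

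The missing idea is to compare with the line $\ell(u)=f(u_{\mathrm{free}})+g'(u_{\mathrm{free}}+)(u-u_{\mathrm{free}})$ rather than with a tangent to $f$ at $u^\sharp$. Since $\ell$ is the chord of $f$ over $[u_{\mathrm{free}},\widehat u_{\mathrm{free}}]$, you have $f(u^\sharp)>\ell(u^\sharp)$ whenever $u_{\mathrm{free}}<u^\sharp<\widehat u_{\mathrm{free}}$. Since $g'(u_{\mathrm{free}}+)$ is a supergradient of the concave $g$ at $u_{\mathrm{free}}$ (recall $g'(u_{\mathrm{free}}-)\ge g'(u_{\mathrm{free}}+)$), you have $g(u^\flat)\le\ell(u^\flat)$ for every $u^\flat$. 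Therefore, for $u^\flat<u^\sharp$,
\[
H-g'(u_{\mathrm{free}}+)=\frac{\bigl(f(u^\sharp)-\ell(u^\sharp)\bigr)+\bigl(\ell(u^\flat)-g(u^\flat)\bigr)}{u^\sharp-u^\flat}>0,
\]
with no smallness of $u^\flat-u_{\mathrm{free}}$ needed. The paper's two cases are just the two ways of checking $g(u^\flat)\le\ell(u^\flat)$. When $u^\flat\le u_{\mathrm{free}}$ it follows from chord monotonicity of $f$. When $u^\flat>u_{\mathrm{free}}$ it follows from the right tangent of $g$ at $u_{\mathrm{free}}$.
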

    \begin{proof}
        First, assume that $t \leq x / g'(u_{\mathrm{free}}+)$. In this region, by construction, $u^\sharp(t,x)$ is close to $u^+$ and satisfies $u^+ \leq u^\sharp(t,x) < \widehat{u}_{\mathrm{free}}$. Moreover, $u^\flat(t,x) \leq u_{\mathrm{free}}$, which implies $g(u^\flat) \equiv f(u^\flat)$.
        Then, by the monotonicity property of secant slopes for concave functions,
        $$
        H(t,x)=\frac{f(u^\flat(t,x-))-f(u^\sharp(t,x+))}{u^\flat(t,x-) - u^\sharp(t,x+)} > \frac{f(\widehat  u_{\mathrm{free}})-f(u^\flat)}{\widehat u_{\mathrm{free}} - u^\flat}> \frac{f(\widehat  u_{\mathrm{free}})-f(u_{\mathrm{free}})}{\widehat u_{\mathrm{free}} - u_{\mathrm{free}}}=g'(u_{\mathrm{free}}+).
        $$
        Next, assume $t > x / g'(u_{\mathrm{free}}+)$. Now, $u_\mathrm{free}<u^\flat<u^+<u^\sharp<\widehat u_\mathrm{free}$ and, by the concavity of $g$
        $$
        g(u^\flat)\geq g(u_\mathrm{free})+g'(u_\mathrm{free}+)(u^\flat-u_\mathrm{free}) = f(u_\mathrm{free})+g'(u_\mathrm{free}+)(u^\flat-u_\mathrm{free}).
        $$
        Therefore,
        \begin{align*}
            H(t,x)=\frac{f(u^\sharp(t,x+))-g(u^\flat(t,x-))}{ u^\sharp(t,x+) - u^\flat(t,x-)} &\geq \frac{f(u^\sharp)-f(u_\mathrm{free})}{u^\sharp - u^\flat}-g'(u_\mathrm{free}+)\frac{u^\flat-u_\mathrm{free}}{u^\sharp - u^\flat}
            \\
            &>g'(u_\mathrm{free}+)\frac{\widehat u_\mathrm{free}-u_\mathrm{free}}{u^\sharp - u^\flat}-g'(u_\mathrm{free}+)\frac{u^\flat-u_\mathrm{free}}{u^\sharp - u^\flat}
            \\
            &>g'(u_\mathrm{free}+).
        \end{align*}
    \end{proof}
    By the previous Lemma, any integral curve of \eqref{eq:Yode3fr}, starting from the origin immediately enters the region where $x>g'(u_\mathrm{free}+) t$. Indeed, if $y(0)=0$, then 
    $$
    \frac{d}{dt}(y(t)-g'(u_\mathrm{free}+)t)=H(t,y(t))-g'(u_\mathrm{free}+)>0,
    $$
    and therefore $y(t)>g'(u_\mathrm{free}+)t$ for sufficiently small $t>0$. By  construction, we have $u^\flat(t, y(t))<u_\mathrm{free}$. Since $f\equiv g$ on $[0, u_\mathrm{free}]$, $H(t, x)$ reads \eqref{eq:H3fr}. Moreover, $u^\sharp(t,y+)-u^\flat(t,y-)\geq u^+-u_\mathrm{free}>0$, so the denominator remains uniformly bounded away from zero. Therefore, $H(t, x)$ is locally Lipschitz in this region. Hence, the Cauchy problem \eqref{eq:Yode3fr} admits a unique local solution.
\end{itemize}

\begin{figure}[h!]
\centering
\begin{tikzpicture}[scale=1.0]
  \draw[->] (-0.5,0) -- (5,0) node[right] {};
  \draw[->] (0,-0.5) -- (0,3.5) node[above] {};
  \draw[domain=0:4, smooth, variable=\x, red, thick] 
       plot ({\x}, {0.8*\x*(4-\x)});
  \draw[domain=0:0.8, smooth, variable=\x, blue, thick] 
       plot ({\x}, {0.8*\x*(4-\x)});    
  \draw[domain=0.8:4, smooth, variable=\x, blue, thick] 
       plot ({\x}, {0.5*(\x+0.455)*(4-\x)});
  \node at (-0.2,0) [below] {$0$};
  \node at (4,0) [below] {$u_{max}$};
  \draw[thick, black, dashed] (0.8,0) -- (0.8,2);
  \node at (0.8,0) [below] {$u_{\mathrm{free}}$};
   \draw[thick, black] (0.8,{0.8*0.8*(4-0.8)}) -- (2.1,{0.8*2.1*(4-2.1)});
   \draw[thick, black] (0.2,1.04) -- (1.5,3.4);
   \node at (1.5,3.4) [above] {$g'(u_{\mathrm{free}}-)$};
   \node at (2.8,3.1) [above] {$g'(u_{\mathrm{free}}+)$};
   \node at (2.2,0) [below] {$\widehat{u}_{\mathrm{free}}$};
   \draw[thick, black, dashed] (2.1,0) -- (2.1,{0.8*2.1*(4-2.1)});
   \node at (1.5,0) [below] {$u^+$};
   \draw[thick, black, dashed] (1.5,0) -- (1.5,{0.8*1.5*(4-1.5)});
   \node at (0.25,0) [below] {$u^-$};
   \draw[thick, black, dashed] (0.2,0) -- (0.2,{0.8*0.2*(4-0.2)});
   \draw[very thick, green] (0.2,{0.8*0.2*(4-0.2)}) -- (1.5,{0.8*1.5*(4-1.5)});
\end{tikzpicture}
\begin{tikzpicture}[scale=1.0]
  \draw[->] (-0.5,0) -- (5,0) node[right] {};
  \draw[->] (0,-0.5) -- (0,3.5) node[above] {};
  \draw[domain=0:4, smooth, variable=\x, red, thick] 
       plot ({\x}, {0.8*\x*(4-\x)});
  \draw[domain=0:0.8, smooth, variable=\x, blue, thick] 
       plot ({\x}, {0.8*\x*(4-\x)});    
  \draw[domain=0.8:4, smooth, variable=\x, blue, thick] 
       plot ({\x}, {0.5*(\x+0.455)*(4-\x)});
  \node at (-0.2,0) [below] {$0$};
  \node at (4,0) [below] {$u_{max}$};
  \draw[thick, black, dashed] (0.8,0) -- (0.8,2);
  \node at (0.8,0) [below] {$u_{\mathrm{free}}$};
   \draw[thick, black] (0.8,{0.8*0.8*(4-0.8)}) -- (2.1,{0.8*2.1*(4-2.1)});
   \draw[thick, black] (0.2,1.04) -- (1.5,3.4);
   \node at (1.5,3.4) [above] {$g'(u_{\mathrm{free}}-)$};
   \node at (2.8,3.1) [above] {$g'(u_{\mathrm{free}}+)$};
   \node at (2.2,0) [below] {$\widehat{u}_{\mathrm{free}}$};
   \draw[thick, black, dashed] (2.1,0) -- (2.1,{0.8*2.1*(4-2.1)});
   \node at (1.5,0) [below] {$u^+$};
   \draw[thick, black, dashed] (1.5,0) -- (1.5,{0.8*1.5*(4-1.5)});
   \draw[very thick, green] (0.8,{0.8*0.8*(4-0.8)}) -- (1.5,{0.8*1.5*(4-1.5)});
    \node at (3,0) [below] {$u^-$};
    \draw[thick, black, dashed] (3,0) -- (3,{0.5*(3+0.455)*(4-3});
\end{tikzpicture}
\caption{Two-Phase Model:  Case 3. Representation of $\dot y(0+)=\lambda$ (in green) when $u^-<u_{\mathrm{free}}<u^+<\widehat u_{\mathrm{free}}$ on the left, and when $u^-\geq u_{\mathrm{free}}$ and $u_{\mathrm{free}}<u^+<\widehat u_{\mathrm{free}}$ on the right.}
\end{figure}

\bibliographystyle{plain}
\bibliography{biblio}
\end{document}